\newtheorem{theorem}{Theorem}[section]
\newtheorem{lemma}[theorem]{Lemma}
\newtheorem{prop}[theorem]{Proposition}
\newtheorem{cor}[theorem]{Corollary}
\newtheorem*{thm*}{Main Theorem}
\theoremstyle{definition}
\newtheorem{definition}[theorem]{Definition}
\theoremstyle{remark}
\newtheorem{remark}[theorem]{Remark}
\numberwithin{equation}{section}
\newcommand\bp{\begin{proof}}
\newcommand\ep{\end{proof}}
\newcommand{\Cs}{C$^*$}
\newcommand{\Ws}{W$^*$}
\DeclareMathOperator{\Ext}{Ext}
\DeclareMathOperator{\Rep}{Rep}
\DeclareMathOperator{\Tr}{Tr}
\newcommand{\C}{{\mathbb C}}
\newcommand{\DD}{{\mathbb D}}
\newcommand{\Z}{{\mathbb Z}}
\newcommand{\R}{{\mathbb R}}
\newcommand\T{{\mathbb T}}
\newcommand{\F}{{\mathcal F}}
\newcommand\HH{\mathcal H}
\newcommand{\K}{{\mathcal K}}
\newcommand\OO{\mathcal O}
\newcommand\QQ{\mathcal Q}
\newcommand\TT{\mathcal T}
\newcommand{\Fock}{{\mathcal{F}_{P}}}
\newcommand{\GA}{\tilde{O}_P^+}
\newcommand{\GL}{\mathrm{GL}}
\newcommand\eps{\varepsilon}
\renewcommand{\tilde}{\widetilde}
\begin{document}

\date{January 3, 2024; minor corrections January 26, 2024}

\title[KK-duality for subproduct systems]{KK-duality for the Cuntz--Pimsner algebras of Temperley--Lieb subproduct systems}


\author{Francesca Arici}
\address{Mathematical Institute, Leiden University, P.O. Box 9512, 2300 RA
Leiden, the Netherlands}
\curraddr{}
\email{f.arici@math.leidenuniv.nl}
\thanks{This work is part of the research programme VENI with project number 016.192.237, which is (partly) financed by the Dutch Research Council (NWO)}

\author{Dimitris M. Gerontogiannis}
\address{Mathematical Institute, Leiden University, P.O. Box 9512, 2300 RA
Leiden, the Netherlands}
\curraddr{}
\email{d.m.gerontogiannis@math.leidenuniv.nl}
\thanks{}

\author{Sergey Neshveyev}
\address{Department of Mathematics, University of Oslo,
P.O. Box 1053 Blindern, NO-0316 Oslo, Norway}
\email{sergeyn@math.uio.no}
\thanks{Supported by the NFR project 300837 ``Quantum Symmetry''}


\keywords{Subproduct systems, quantum groups, $KK$-theory, Spanier--Whitehead duality, Poincar\'e duality, KMS-states}
\subjclass[2020]{46L52, 46L67, 46L85 (Primary); 19K35 (Secondary)}


\begin{abstract}
We prove that the Cuntz--Pimsner algebra of every Temperley--Lieb subproduct system is $KK$-self-dual. We show also that every such Cuntz--Pimsner algebra has a canonical KMS-state, which we use to construct a Fredholm module representative for the fundamental class of the duality. This allows us to describe the $K$-homology of the Cuntz--Pimsner algebras by explicit Fredholm modules.

Both the construction of the dual class and the proof of duality rely in a crucial way on quantum symmetries of Temperley--Lieb subproduct systems. 
In the simplest case of Arveson's $2$-shift our work establishes $U(2)$-equivariant $KK$-self-duality of~$S^3$. 
\end{abstract}

\maketitle

\section*{Introduction}
The notion of $KK$-duality is a noncommutative analogue of the Spanier--Whitehead duality which relates the homology of a finite complex with the cohomology of some dual finite complex. Namely, two separable C$^*$-algebras $A$ and $B$ are said to be $KK$-dual with dimension shift $i$, if there exists a \emph{fundamental class} $\Delta\in KK_i(A\otimes B,\mathbb C)$ and a \emph{dual class} $\delta \in KK_i(\mathbb C, B\otimes A)$, with Kasparov products satisfying
\begin{equation*}\label{eq:duality_con}
    \delta \otimes_A \Delta = (-1)^i 1_{KK(B,B)}, \qquad \delta \otimes_B \Delta = 1_{KK(A,A)}.
\end{equation*}
The significance of this notion is reflected in natural isomorphisms between the $K$-theory and $K$-homology of the dual C$^*$-algebras obtained by taking Kasparov products with the duality classes, see Section~\ref{sec:duality}. These isomorphisms can be particularly useful in index-theoretic computations, if the duality classes are made explicit. 

As follows from a duality theorem of Kasparov~\cite{Ka88}, for any complete Riemannian manifold~$X$, the C$^*$-algebra $C_0(X)$ is $KK$-dual to the (graded) algebra of $C_0$-sections of the associated Clifford bundle. This has played an important role in Connes' noncommutative geometry programme \cite{Con} and is a fundamental ingredient for conceptualizing noncommutative manifolds. The result of Kasparov is also the basis of the Dirac--dual Dirac method for proving the Baum--Connes conjecture, which has led to proofs of $KK$-duality in a variety of cases, see, for example,~\cite{EEK,Em03,EmMe,NP}.

Kaminker and Putnam discovered that hyperbolic dynamics gives another source of examples of $KK$-duality not related to the Dirac--dual Dirac method (at least not in a straightforward way, see the discussion in~\cite[Section~4.5]{KPW}). In~\cite{KaPu} they showed that the Cuntz--Krieger algebras~$\OO_A$ and~$\OO_{A^t}$ are $KK$-dual. More generally, it has been shown~\cite{KPW} that the stable and unstable Ruelle algebras of any irreducible Smale space are $KK$-dual, see \cite{BBGHSW,Ge,GWZ,Nek,PZ,RRS} for applications and related results.

The present work gives a new class of examples of $KK$-dual algebras and can be viewed as a quantum analogue of the result of Kaminker and Putnam.  Specifically, we consider the Cuntz--Pimsner algebras of subproduct systems defined by the so-called Temperley--Lieb polynomials, which were introduced and studied by Habbestad--Neshveyev~\cite{HaNe21,HaNe22} and in particular cases earlier by Andersson~\cite{A} and Arici--Kaad~\cite{ArKa21}. These algebras can be thought of as algebras of functions on algebraic subsets of noncommutative spheres. Our goal is to prove the following result about them.

\begin{thm*}
Let $A=(a_{ij})_{i,j}\in \GL_m(\C)$, with $m\geq 2$, be such that $A\bar A$ is unitary. Consider the noncommutative quadratic polynomials defined respectively by $A$ and $A^t$, $$P=\sum_{i,j=1}^{m}a_{ij}X_iX_j, \qquad P^{\dagger}=\sum_{i,j=1}^{m}a_{ji}X_iX_j.$$ Then the Cuntz--Pimsner algebras $\mathcal{O}_P$ and $\mathcal{O}_{P^{\dagger}}$, associated with the subproduct systems defined by $P$ and $P^\dagger$, are $KK$-dual with dimension shift $1$.
\end{thm*}

We remark that the C$^*$-algebras $\OO_P$ and $\OO_{P^\dagger}$ are isomorphic, but not in a canonical way.

\smallskip

The  Cuntz--Pimsner algebras of subproduct systems~\cite{Vis12} are generalizations of the more familiar Cuntz--Pimsner algebras of C$^*$-correspondences, or of product systems of C$^*$-correspondences, but the reason we say that the above theorem is a quantum analogue of the result of Kaminker and Putnam goes deeper than this. As was shown in~\cite{HaNe21}, when $A\bar A=\pm1$, the gauge-invariant part of $\OO_P$ coincides with a noncommutative boundary of an object that can be viewed as a quantum analogue of a tree~\cite{VaVe07}. Conceptually one can then think of $\OO_P$ as a crossed product of that noncommutative boundary by a hyperbolic transformation, so as some sort of quantum Ruelle algebra. With minor modifications, the considerations in~\cite{HaNe21} in fact apply to all  Temperley--Lieb polynomials.

In more precise terms, the appearance of quantum tree-like structures is explained by analyzing quantum symmetries of Temperley--Lieb polynomials. As in \cite{HaNe22}, we denote by $\GA$ the quantum group of unitary transformations leaving a Temperley--Lieb polynomial $P$ invariant up to a phase factor. This is a compact quantum group first studied in~\cite{Mro14}. When $A\bar A=\pm1$, this quantum group decomposes as $O^+_P\times\T$, where $O^+_P$ is the quantum group of unitary transformations leaving $P$ invariant. The quantum group $O^+_P$ is called a free orthogonal quantum group, and its dual discrete quantum group is known to behave in many respects as a free group. Among other properties, it has a noncommutative compactification similar to the end compactification of the Cayley graph of a free group~\cite{VaVe07}. For general $P$, instead of $O^+_P$ one should consider a braided version of free orthogonal quantum groups to get a semidirect product decomposition of $\GA$, but we will not delve deeper into this.

The quantum group $\GA$ plays a central role in the paper. In particular, the candidate dual class $\delta\in KK_1(\mathbb C, \OO_{P^\dagger}\otimes \OO_P)$ is derived from the fusion rules for $\GA$, see Subsections \ref{sec:dual} and \ref{sec:quantum_sym}. Similarly to the Cuntz algebra case, it can be thought to represent the shift along geodesics in a quantum tree. The quantum group $\GA$ is not important for the fundamental class $\Delta\in KK_1(\OO_P\otimes \OO_{P^\dagger},\C)$ though, which is defined as an extension class similarly to~\cite{KaPu}. See Section~\ref{sec:compare} for more on differences and similarities with the Cuntz algebra case.

The bulk of our effort lies in Section \ref{sec:proof}, where we show that the classes we introduce indeed implement a $KK$-duality between $\mathcal{O}_{P}$ and $\mathcal{O}_{P^\dagger}$. To this end, a key observation is that the Kasparov products of those classes lift to $\GA$-equivariant $KK$-theory. This is used in two ways. First, by a result of Mrozinski~\cite{Mro14}, $\GA$ is monoidally equivalent to $U_q(2)$ for a uniquely defined $0<q\le1$. This allows us to reduce the proof of duality to the polynomials $q^{-1/2}X_1X_2-q^{1/2}X_2X_1$. Second, by considering equivariant $KK$-classes we get more tools for manipulating with them, such as Frobenius reciprocity.

It is important to say that, although we do prove a stronger result (Theorem~\ref{thm:main-equi}) than the one formulated in our Main Theorem, it nevertheless does not establish a quantum group equivariant $KK$-duality of~$\OO_P$ and~$\OO_{P^\dagger}$, as defined by Nest and Voigt~\cite{NV}. First, to prove such a duality we would have to work with the Drinfeld double of $\GA$ instead of $\GA$ itself. The second reason is that $\GA$ acts on the right on $\OO_P$, but on the left on $\OO_{P^\dagger}$, and so our classes are not $\GA$-equivariant in the usual sense, only their Kasparov products are. Whether this is a real obstacle or just requires a change in perspective is not clear to us, but we hope to return to this on another occasion. When $P=X_1X_2-X_2X_1$, the quantum group $\GA$ is classical and equals~$U(2)$. In this case, there is no difference between left and right actions and equivariant $KK$-duality makes sense without involving Drinfeld doubles. The corresponding Cuntz--Pimsner algebra~$\OO_P$ is~$C(S^3)$, and our proof implies that $S^3$ (thought of as the unit sphere in $\C^2$) is $U(2)$-equivariantly $KK$-self-dual.

We note that in the proof of $KK$-duality, the fact that the fundamental class is defined abstractly by an extension class, rather than as a concrete $KK_1$-class, does not cause any additional difficulty. On the contrary, it appears to be an asset. Nevertheless, in our case, if one asks for a description of the $K$-homology of the Cuntz--Pimsner algebras suitable for computations, it becomes evident that the fundamental class has to be represented by an explicit odd Fredholm module. In Section \ref{sec:Fredholm}, by adapting an idea of  Goffeng--Mesland \cite{GoMe}, we find such a representative, which also allows us to describe the generators of the $K$-homology groups of~$\mathcal{O}_{P}$ and~$\mathcal{O}_{P^\dagger}$ in terms of Fredholm modules. A key tool for building the Fredholm module representative is the unique $\GA$-invariant state on $\OO_P$, which is a canonical KMS-state on $\OO_P$, see Section \ref{sec:KMS}. 


\smallskip
{\bf Acknowledgement.} The authors are grateful to Erik Habbestad for inspiring discussions at the early stage of the project.

\bigskip

\section{Preliminaries}
\subsection{Temperley--Lieb subproduct systems}\label{sec:TL}

We start this section by recalling some basic facts from the theory of subproduct systems and their \Cs-algebras. For more details, we refer the reader to \cite{ShSo09,Vis12}. While in their original paper Shalit and Solel studied subproduct systems in the more general setting of \Cs-  and \Ws-correspondences, we will focus here on the Hilbert space case.

By a \emph{subproduct system} of finite dimensional Hilbert spaces we shall mean a sequence of Hilbert spaces $\mathcal{H} = \{H_n\}_{n \in \Z_+}$, with $\dim H_0=1$ and $\dim H_1<\infty$, together with isometries 
$$
w_{m,n}\colon H_{m+n} \to H_m \otimes H_n
$$ 
satisfying
\[ (w_{m,n}\otimes \iota) w_{m+n,k} = (\iota\otimes w_{n,k}) w_{m,n+k} \colon H_{m+n+k} \to H_m \otimes H_n \otimes H_k\]
for all $m,n,k \in \Z_+$, where $\iota$ denotes the identity operator. The Fock space associated to $\mathcal{H}$ is the direct sum Hilbert space 
$$
\mathcal{F}_\mathcal{H} := \bigoplus_{n\ge0} H_n.
$$

Define creation operators on $\mathcal{F}_{\mathcal{H}}$ by
\[ S_\xi(\zeta) := w_{1,n}^*(\xi\otimes \zeta), \quad \xi \in H_1,\ \zeta \in H_n.\]
The \emph{Toeplitz algebra} associated to $\mathcal{H}$ is the unital \Cs-algebra generated by $S_1,S_2,...,S_m$, where $S_i = S_{\xi_i}$ for an orthonormal basis $(\xi_i)_{i=1}^m$ of $H_1$.  It is straightforward to verify that $1_{\mathcal{F}_\HH} - \sum_i S_iS_i^*$ is the rank-one projection onto $H_0$, and it follows that the compacts $\K(\mathcal{F}_\mathcal{H})$ are contained in~$\mathcal{T}_{\mathcal{H}}$ (see \cite[Corollary 3.2]{Vis12}). The associated \emph{Cuntz--Pimsner algebra} is defined as the quotient in the extension
\begin{equation}\label{eq:CPext}
\xymatrix{0 \ar[r] & \mathcal{K}(\mathcal{F}_\HH) \ar[r] & \mathcal{T}_\HH \ar[r] & \mathcal{O}_\HH \ar[r] & 0}.
\end{equation}%


A subproduct system is called \emph{standard} if $H_0=\C$, $H_{m+n}  \subseteq H_{m} \otimes H_{n}$ and $w_{m,n}$ are the embedding maps. In this case we have
\[ \mathcal{F}_{\mathcal{H}} \subseteq \bigoplus_{n\ge0} H_1^{\otimes n} \quad \mbox{ and } \quad S_\xi(\zeta) = f_{n+1}(\xi\otimes \zeta), \quad \xi \in H_1, \zeta \in H_n, \]
where $f_{n+1}$ is the projection $H_1^{\otimes (n+1)} \to H_{n+1}$.

As pointed out in \cite{ShSo09}, standard subproduct systems of finite-dimensional Hilbert spaces provide a natural framework for studying row-contractive tuples of operators subject to polynomial constraints, as made transparent by the existence of a \emph{noncommutative Nullstellensatz}.

\begin{prop}[{\cite[Proposition 7.2]{ShSo09}}]\label{prop:1to1poly}
Let $H$ be an $m$-dimensional Hilbert space with an orthonormal basis $\lbrace \xi_i \rbrace_{i=1}^m$. Then there is a bijective inclusion-reversing correspondence between the proper homogeneous ideals $J  \subset \mathbb{C} \langle X_1,\ldots,X_m \rangle$ and the standard subproduct systems $\lbrace H_n \rbrace_{n \in \mathbb{Z}_{+}}$ with $H_1 \subseteq H$.
\end{prop}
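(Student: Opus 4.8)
The plan is to construct the correspondence explicitly in both directions and check that they are mutually inverse and inclusion-reversing. Fix the orthonormal basis $\lbrace \xi_i\rbrace_{i=1}^m$ of $H$, which identifies $H^{\otimes n}$ with the degree-$n$ homogeneous component $\mathbb{C}\langle X_1,\ldots,X_m\rangle_n$ of the free algebra via $\xi_{i_1}\otimes\cdots\otimes\xi_{i_n}\mapsto X_{i_1}\cdots X_{i_n}$. Under this identification, a homogeneous two-sided ideal $J$ decomposes as $J=\bigoplus_{n\ge 0}J_n$ with $J_n\subseteq H^{\otimes n}$, and properness means $J_0=0$. Given such a $J$, I would set $H_n:=H^{\otimes n}\ominus J_n$ (the orthogonal complement), with $H_0=\mathbb{C}$ and $H_1\subseteq H$ since $J_1$ is a subspace of $H$. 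The task is to verify that $\{H_n\}$ is a standard subproduct system, i.e. that $H_{m+n}\subseteq H_m\otimes H_n$ under the obvious identification $H^{\otimes(m+n)}=H^{\otimes m}\otimes H^{\otimes n}$.

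The first key step, and the one I expect to be the main point of the argument, is this inclusion $H_{m+n}\subseteq H_m\otimes H_n$, equivalently $J_m\otimes H^{\otimes n}+H^{\otimes m}\otimes J_n\subseteq J_{m+n}$. This is precisely where the two-sided ideal property of $J$ enters: $J_m\otimes H^{\otimes n}$ corresponds to $J_{m}\cdot\mathbb{C}\langle X\rangle_n\subseteq J$ (right ideal property) and $H^{\otimes m}\otimes J_n$ corresponds to $\mathbb{C}\langle X\rangle_m\cdot J_n\subseteq J$ (left ideal property). Taking orthogonal complements of the reverse inclusion $J_{m+n}^\perp\subseteq (J_m\otimes H^{\otimes n})^\perp\cap(H^{\otimes m}\otimes J_n)^\perp=(H_m\otimes H^{\otimes n})\cap(H^{\otimes m}\otimes H_n)$ gives $H_{m+n}\subseteq H_m\otimes H_n$. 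The subproduct compatibility condition $(w_{m,n}\otimes\iota)w_{m+n,k}=(\iota\otimes w_{n,k})w_{m,n+k}$ is then automatic, since for standard systems both sides are simply the inclusion $H_{m+n+k}\hookrightarrow H^{\otimes(m+n+k)}=H^{\otimes m}\otimes H^{\otimes n}\otimes H^{\otimes k}$.

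In the reverse direction, given a standard subproduct system $\{H_n\}$ with $H_1\subseteq H$, I would define $J_n:=H^{\otimes n}\ominus H_n$ and $J:=\bigoplus_{n\ge0}J_n$. To see that $J$ is a two-sided ideal I would dualize the argument above: the relation $H_{m+n}\subseteq H_m\otimes H_n\subseteq H_m\otimes H^{\otimes n}$ gives, on orthogonal complements, $J_m\otimes H^{\otimes n}\subseteq J_{m+n}$, i.e. $J$ is a right ideal; symmetrically $H_{m+n}\subseteq H^{\otimes m}\otimes H_n$ gives the left ideal property. Properness follows from $H_0=\mathbb{C}=H^{\otimes 0}$, so $J_0=0$. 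Finally I would note that these two assignments are inverse to each other, since in both cases $H_n$ and $J_n$ are defined as mutually orthogonal complements inside $H^{\otimes n}$, and that both are visibly inclusion-reversing: $J\subseteq J'$ iff $J_n\subseteq J'_n$ for all $n$ iff $H_n\supseteq H'_n$ for all $n$. This completes the correspondence; no step beyond the ideal/complement bookkeeping is needed, and the only substantive content is the translation between the one- and two-sided ideal conditions and the subproduct inclusions, carried out in the key step above.
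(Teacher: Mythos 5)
Your proof is correct and is the natural argument; note that the paper does not actually prove this proposition but cites it from Shalit--Solel, where the construction is essentially the one you give (the correspondence $J\mapsto\{H^{\otimes n}\ominus J_n\}$ and its inverse via orthogonal complements). The only step you leave implicit that deserves a word is the linear-algebra identity $(H_m\otimes H^{\otimes n})\cap(H^{\otimes m}\otimes H_n)=H_m\otimes H_n$, which holds because the corresponding orthogonal projections $P_{H_m}\otimes 1$ and $1\otimes P_{H_n}$ commute; with that noted, everything checks out.
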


The correspondence works as follows: for a noncommutative polynomial $P=\sum c_{\alpha} X_{\alpha}$ in variables $X_1\dots,X_m$, where $\alpha=(\alpha_1,\dots,\alpha_n)$, $\alpha_i\in\{1,\dots,m\}$ and $X_\alpha=X_{\alpha_1}\dots X_{\alpha_n}$, we write $P(\xi) := \sum c_{\alpha} \xi_{\alpha_1} \otimes \dotsc \otimes \xi_{\alpha_n}$. To any proper homogeneous ideal $J \subset \langle X_1,\ldots,X_m \rangle$, we associate the standard subproduct system with fibres $H_n:= H^{\otimes n} \ominus \lbrace P(\xi) : P \in J^{(n)} \rbrace$, for every $n \geq 0$, where $J^{(n)}$ denotes the degree $n$ component of the ideal $J$. 




Note that, while the subproduct system associated to a proper homogeneous ideal $J  \subset \mathbb{C} \langle X_1,\ldots,X_m \rangle$ depends on the choice of an orthonormal basis for the Hilbert space $H$, different choices give rise to isomorphic subproduct systems. In a basis-independent form Proposition~\ref{prop:1to1poly} can also be formulated as follows: there is a bijective inclusion-reversing correspondence between the proper homogeneous ideals $J$ of the tensor algebra $T(H)$ and the standard subproduct systems $\lbrace H_n \rbrace_{n \in \mathbb{Z}_{+}}$ with $H_1 \subseteq H$.

\smallskip

In the present paper we will focus on a class of standard subproduct systems induced by special quadratic polynomials, called \emph{Temperley--Lieb} polynomials, introduced in \cite{HaNe21} and further studied in \cite{HaNe22}.

\begin{definition}[\cite{HaNe21}]\label{def:TP_pol}
Let $H$ be a finite-dimensional Hilbert space of dimension $m \geq 2$. A nonzero vector $P \in H\otimes H$ is called \emph{Temperley--Lieb} if there is $\lambda > 0$ such that the orthogonal projection $e\colon H\otimes H \to \C P$ satisfies
\[ (e\otimes 1)(1\otimes e)(e\otimes 1) = \dfrac{1}{\lambda}(e\otimes 1)\quad\text{in}\quad B(H\otimes H\otimes H).\]
\end{definition}

The standard subproduct system $\mathcal{H}_P = \{H_n\}_{n\in\Z_+}$ defined by the ideal $\langle P\rangle\subset T(H)$ generated by $P$ is called a Temperley--Lieb subproduct system. We write $\mathcal{F}_P = \mathcal{F}_{\mathcal{H}_P}$, $\mathcal{T}_P = \mathcal{T}_{\mathcal{H}_P}$ and $\mathcal{O}_P = \mathcal{O}_{\mathcal{H}_P}$.

We will often fix an orthonormal basis in $H$ and identify $H^{\otimes n}$ with the space of homogeneous noncommutative polynomials of degree $n$ in variables $X_1,\dots,X_m$. In particular, we write a vector $P\in H\otimes H$ as a noncommutative polynomial $P=\sum^m_{i,j=1}a_{ij}X_iX_j$. Consider the matrix $A=(a_{ij})_{i,j}$. By \cite[Lemma~1.4]{HaNe21}, $P$ is Temperley--Lieb if and only if the matrix $A\bar A$ is unitary up to a (nonzero) scalar factor, where $\bar A=(\bar a_{ij})_{i,j}$. Since the ideal generated by $P$ does not change if we multiply $P$ by a nonzero factor, we may always assume that $A\bar A$ is unitary.


The following describes a complete set of relations in $\TT_P$.

\begin{theorem}[{\cite[Theorem~2.11]{HaNe22}}] \label{thm: univ_relations}
Let $A=(a_{ij})_{i,j}\in \GL_m(\C)$ ($m\geq 2$) be such that $A\bar A$ is unitary. Let $q\in(0,1]$ be the number such that $\Tr(A^*A)=q+q^{-1}$. Consider the noncommutative polynomial $P=\sum^m_{i,j=1}a_{ij}X_iX_j$.  Then $\mathcal{T}_P$ is a universal C$^*$-algebra generated by the \Cs-algebra $c := C(\Z_+ \cup \{\infty\})$ and elements $S_1,S_2,...,S_m$ satisfying the relations
\[ fS_i = S_i\gamma(f) \quad (f \in c,\ 1 \leq i \leq m), \quad \sum^m_{i=1} S_iS_i^* = 1 - e_0, \quad \sum^m_{i,j=1} a_{ij}S_iS_j = 0,  \]
\[S_i^*S_j + \phi\sum^m_{k,l=1}a_{ik}\bar{a}_{jl}S_kS_l^* = \delta_{ij}1 \quad (1 \leq i,j \leq m), \]
where $\gamma\colon c\to c$ is the shift to the left (so $\gamma(f)(n)=f(n+1)$), $e_0$ is the characteristic function of $\{0\}$ and $\phi\in c$ is the element given by
\begin{equation} \label{eq:phi(n)}
\phi(n)= \dfrac{[n]_q}{[n+1]_q},\quad\text{with}\quad [n]_q=\begin{cases} \dfrac{q^{n}-q^{-n}}{q - q^{-1}}, &\text{if}\ q<1,\\
n,&\text{if}\ q=1.\end{cases}
\end{equation}
\end{theorem}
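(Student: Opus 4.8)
The plan is to prove Theorem~\ref{thm: univ_relations} by first establishing that the listed relations hold in $\TT_P$, and then verifying the universal property, namely that any C$^*$-algebra generated by a copy of $c$ and elements satisfying these relations receives a surjection onto it which is in fact an isomorphism. For the first part, I would work in the concrete Fock space picture of Subsection~\ref{sec:TL}: the generators $S_i = S_{\xi_i}$ are the creation operators on $\Fock = \bigoplus_{n\ge0}H_n$, where $H_n = H^{\otimes n}\ominus\{P(\xi): P\in\langle P\rangle^{(n)}\}$. The copy of $c = C(\Z_+\cup\{\infty\})$ should be realized as the algebra generated by the grading projections $e_n$ onto $H_n$ together with the identity (the point $\infty$ corresponding to the unit), and $\gamma$ is the left shift reflecting $S_i H_n\subseteq H_{n+1}$, whence $e_n S_i = S_i e_{n-1} = S_i\gamma(e_n)$. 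The relation $\sum_i S_iS_i^* = 1-e_0$ is exactly the statement recalled in the text that $1-\sum_i S_iS_i^*$ is the projection onto $H_0$, and $\sum_{i,j}a_{ij}S_iS_j = 0$ holds because $P = \sum a_{ij}\xi_i\otimes\xi_j$ is orthogonal to $H_2$, so $S_{P}$ annihilates everything. The only genuinely computational relation is the last one: $S_i^*S_j + \phi\sum_{k,l}a_{ik}\bar a_{jl}S_kS_l^* = \delta_{ij}1$. Here I would compute $S_i^*S_j$ on $H_n$ using the formula $S_\xi(\zeta) = f_{n+1}(\xi\otimes\zeta)$ and its adjoint, express the resulting operator in terms of the Temperley--Lieb projection $e$ onto $\C P$ (more precisely the operators $e_0^{(k)} = 1^{\otimes k}\otimes e\otimes 1^{\otimes(\cdots)}$ on $H^{\otimes n}$), and then invoke the Wenzl-type recursion for the Jones--Wenzl projections: the quantity $\phi(n) = [n]_q/[n+1]_q$ is precisely the coefficient appearing when one pushes a Temperley--Lieb relation past $n$ strands, and $q+q^{-1} = \Tr(A^*A) = d$ is the loop parameter, with $\lambda = d^2$ in the normalization of Definition~\ref{def:TP_pol}. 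This is the step I expect to be the main obstacle, though it is essentially bookkeeping with the Temperley--Lieb category and should largely be citable or reducible to a short induction using the defining projection identity $(e\otimes 1)(1\otimes e)(e\otimes 1) = \lambda^{-1}(e\otimes 1)$.

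For the universal property, let $B$ be any C$^*$-algebra generated by a unital copy of $c$ and elements $T_1,\dots,T_m$ satisfying the listed relations, with $\phi$ and $e_0$ interpreted inside the copy of $c$. I would first show that the relations force $B$ to be spanned (as a closed linear space) by the monomials $T_\alpha f T_\beta^*$ with $f\in c$, where $T_\alpha = T_{\alpha_1}\cdots T_{\alpha_k}$ ranges over multi-indices and, crucially, only over multi-indices whose associated basis vectors $\xi_{\alpha_1}\otimes\cdots\otimes\xi_{\alpha_k}$ have nonzero image in $H_k$ — indeed the relation $\sum a_{ij}T_iT_j = 0$ together with its adjoint-type consequences lets one rewrite any monomial through the projection $f_k$ onto $H_k$, and the relation $T_i^*T_j = \delta_{ij} - \phi\sum a_{ik}\bar a_{jl}T_kT_l^*$ lets one move all adjoints to the right and collapse products $T_\alpha^* T_\beta$ to a combination of $f T_\gamma$ and $T_\gamma^* f$ type terms. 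This spanning set should match the one for $\TT_P$ on the Fock space; the map $S_i\mapsto T_i$, $e_n\mapsto e_n$ then extends to a surjective $*$-homomorphism $\pi\colon\TT_P\to B$.

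To see that $\pi$ is injective, the cleanest route is a gauge-invariant uniqueness argument. The circle group $\T$ acts on $\TT_P$ by $S_i\mapsto z S_i$ fixing $c$, and the same formulas define an action on $B$ because all relations are homogeneous of weight $0$ (each monomial has as many $T$'s as $T^*$'s after reduction, and the scalar relations are weight-preserving). One then shows $\pi$ is equivariant, that the fixed-point algebra $\TT_P^\T$ is generated by $c$ and the "diagonal" blocks $S_\alpha f S_\beta^*$ with $|\alpha|=|\beta|$, and that $\pi$ restricts to an isomorphism on fixed-point algebras — this reduces to an AF-type or inductive-limit comparison where the Temperley--Lieb relations pin down the dimensions of $H_n$ (equivalently, which monomials survive), so the two fixed-point algebras have matching finite-dimensional approximating subalgebras. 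A faithful conditional expectation onto the fixed-point algebra (averaging over $\T$) on both sides then upgrades injectivity on the fixed-point algebra to injectivity on all of $\TT_P$. An alternative, if one prefers to avoid reconstructing the combinatorics by hand, is to cite \cite[Theorem~2.11]{HaNe22} directly — but since the statement is attributed there, the expected proof here is either a verification in that spirit or simply a pointer to that reference with the Fock-space relations checked as above; I would present the relation-checking in detail and treat the universality via the gauge-invariant uniqueness theorem, flagging the dimension count for $H_n$ as the technical heart.
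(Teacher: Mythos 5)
This result is not proved in the present paper; it is quoted verbatim from \cite[Theorem~2.11]{HaNe22}. So there is no ``paper's own proof'' to compare against, and your proposal should be judged as an attempted reconstruction of the argument in that reference.

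Your verification that the relations hold in $\TT_P$ is sound: the grading relation, $\sum_i S_iS_i^*=1-e_0$, and $\sum_{i,j}a_{ij}S_iS_j=0$ are exactly as you describe, and the last relation does reduce to the Wenzl recursion for Jones--Wenzl projections, with $\phi(n)=[n]_q/[n+1]_q$ the recursion coefficient and $q+q^{-1}$ the loop parameter. So far so good.

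The universality step, however, has a genuine logical gap. You posit a surjective $*$-homomorphism $\pi\colon\TT_P\to B$ ``extending $S_i\mapsto T_i$, $e_n\mapsto e_n$'' and then aim to show $\pi$ is injective via gauge-invariant uniqueness. But the existence of such a $\pi$ is precisely the universal property you are trying to establish; it does not follow from a spanning-set argument alone, since a spanning set does not pin down linear relations. Moreover, for a general $B$ satisfying the relations (e.g.\ a proper quotient of $\TT_P$ in which $c$ is still faithful but some higher-degree monomials collapse) such a $\pi$, if it existed, need not be injective. The correct shape of the argument is: let $\TT_P^{\mathrm{univ}}$ denote the abstract universal $C^*$-algebra for the given generators and relations; since $\TT_P$ satisfies the relations there is a canonical surjection $q\colon\TT_P^{\mathrm{univ}}\to\TT_P$, and the task is to show $q$ is injective. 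Your gauge-action observations then apply in this corrected form: the relations are weight-homogeneous, so $\T$ acts on $\TT_P^{\mathrm{univ}}$, $q$ is equivariant, both sides have faithful conditional expectations onto the fixed-point algebras, and one must show $q$ is injective on $(\TT_P^{\mathrm{univ}})^{\T}$. This last step is exactly the ``AF-type comparison'' you flag as the technical heart, but note the direction: one must show that the relations force $(\TT_P^{\mathrm{univ}})^{\T}$ to be \emph{no larger} than the algebra generated by $c$ and the grading blocks on the Fock space, which in turn requires that the quadratic relation $\sum a_{ij}T_iT_j=0$ together with its translates spans all of $\ker f_k$ in each degree $k$ — a nontrivial fact about quadratic Temperley--Lieb ideals that you gesture at but do not justify. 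With these corrections the ingredients you list (Fock-space verification, gauge action, fixed-point analysis, faithful conditional expectation) would assemble into a valid proof.
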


Here $c$ is identified with a unital subalgebra of $\K(\mathcal{F}_P)+\C1 \subset \mathcal{T}_P$, with $e_n\in c$ being identified with the projection $\mathcal{F}_P \to H_n$. Note also that $\phi(n)\to q$ as $n\to+\infty$.

The relations become slightly simpler if we write $P$ in a standard form. Namely, by \cite[Proposition~1.5]{HaNe21}, up to a unitary change of variables and rescaling, we may assume that our Temperley--Lieb polynomial $P$ has the form 
\begin{equation}
    \label{eq:TLPoly_normal} P = \sum_{i=1}^{m} a_i X_iX_{m-i+1},    \quad \text{with}\quad \vert a_i a_{m-i+1} \vert =1.
\end{equation} 

Next, let us recall what is known about the $K$-theory of $\TT_P$ and $\OO_P$.

\begin{theorem}[{\cite[Theorem~3.1 and Corollary~4.4]{HaNe22}}]
For every Temperley–Lieb polynomial~$P$, the embedding map $\mathbb{C} \to \mathcal{T}_P$ is a $KK$-equivalence. Moreover, we have $[e_0]=(2-m)[1]$ in~$K_0(\TT_P)$.
\end{theorem}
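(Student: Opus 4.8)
The plan is to deduce the $KK$-equivalence from the Universal Coefficient Theorem together with a computation of $K_*(\TT_P)$, and to extract the formula for $[e_0]$ from the same computation.

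First I would record that $\TT_P$ is separable, nuclear and lies in the bootstrap class, so that by the UCT the unital inclusion $\C\to\TT_P$ is a $KK$-equivalence if and only if it induces an isomorphism on $K$-theory, i.e.\ if and only if $K_0(\TT_P)=\Z[1_{\TT_P}]$ and $K_1(\TT_P)=0$. Nuclearity and bootstrap membership would be obtained from the gauge action $\gamma\colon\T\curvearrowright\TT_P$, $\gamma_z(S_i)=zS_i$, $\gamma_z|_c=\operatorname{id}$: its fixed-point algebra $B:=\TT_P^\gamma$ is the closure of $\bigcup_kB_k$, where $B_k$ is generated by $c$ and the words $S_\alpha S_\beta^*$ with $|\alpha|=|\beta|\le k$, and the $S_i^*S_j$-relation of Theorem~\ref{thm: univ_relations} is exactly what lets one rewrite such products and see that each $B_k$ is finite-dimensional over $c$; thus $B$ is AF-like, $K_1(B)=0$, $K_0(B)$ is an explicit inductive limit of finitely generated free abelian groups, and $\TT_P$, being generated by the $\gamma$-spectral subspaces, lies in the bootstrap class.

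Next I would compute $K_*(\TT_P)$. The cleanest route is to realize $\TT_P$ as a (relative) Cuntz--Pimsner algebra of a C$^*$-correspondence $\mathcal Y$ over $B$ built from $S_1,\dots,S_m$; since $K_1(B)=0$ the associated Pimsner/Katsura six-term exact sequence collapses to $K_1(\TT_P)=\ker(\operatorname{id}-[\mathcal Y])$ and $K_0(\TT_P)=\operatorname{coker}(\operatorname{id}-[\mathcal Y])$ for an explicit endomorphism $\operatorname{id}-[\mathcal Y]$ of $K_0(B)$ (or of a subgroup), the ``shift'' part $[\mathcal Y]$ encoding the multiplicities of the subproduct system, hence governed by the numbers $\dim H_n$; one then checks that $\operatorname{id}-[\mathcal Y]$ is injective with cokernel $\Z$, generated by the image of $[1_B]=[1_{\TT_P}]$. (Alternatively one may first compute $K_*(\OO_P)$ — via the gauge action on $\OO_P$, or, when $A\bar A=\pm1$, via the known $K$-theory of free orthogonal quantum group C$^*$-algebras — and feed it into the six-term sequence of the extension~\eqref{eq:CPext} using $K_*(\K(\F_P))=(\Z,0)$; both routes give the same answer.)

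Finally, for $[e_0]=(2-m)[1]$: the connecting maps of the six-term sequence present $K_0(\K(\F_P))\cong\Z$, generated by the rank-one projection $[e_0]$, as a subgroup $d\Z\subseteq K_0(\TT_P)=\Z[1]$, and the point is that $d=2-m$. Conceptually this reflects that, $P$ being a single Temperley--Lieb quadratic relation in $m$ variables, the Hilbert series of $\bigoplus_nH_n$ is $(1-mt+t^2)^{-1}$, so $\bigoplus_nH_n$ is Koszul with finite-dimensional quadratic dual of dimensions $1,m,1$; the resulting length-two resolution of the trivial module, assembled from $P$ and the $m$ creation operators, has Euler characteristic $1-m+1=2-m$, which is what survives in $K_0(\TT_P)$. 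One can also argue more directly: from $\sum_iS_iS_i^*=1-e_0$ and $e_0(S_1,\dots,S_m)=0$ the row $(S_1,\dots,S_m)$ is a partial isometry, so $[1-e_0]=[(S_i^*S_j)_{ij}]$ in $M_m(\TT_P)$, and it remains to evaluate this last class. I expect the main obstacle throughout to be precisely this bookkeeping: subproduct systems carry no tensor-product structure, so there is no Pimsner exact sequence off the shelf, and the real work is the hands-on identification of the AF core $B$, of its $K$-theory, and of the shift endomorphism $[\mathcal Y]$, all controlled by the element $\phi$ with $\phi(n)=[n]_q/[n+1]_q$ interpolating between the Cuntz--Toeplitz regime at $n=0$ and the $q$-deformed regime at $n=\infty$; and the clean integer $2-m$ is obtained most transparently through the Koszul resolution rather than by a brute-force manipulation with the noncentral element $\phi$.
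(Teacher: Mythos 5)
The statement is quoted from~\cite{HaNe22} and the present paper does not reprove it, so there is no internal proof to compare against; here is an assessment of your outline on its own terms.

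The UCT reduction is sound, and the route to nuclearity and bootstrap membership through the gauge-fixed-point core $B=\TT_P^{\T}$ is reasonable in principle, but underspecified: ``finite-dimensional over $c$'' is at best a claim about $c$-bimodule generation, not finite dimensionality (the $B_k$ still act nontrivially on every $H_n$, with $\dim H_n\to\infty$), and you also need $B$ to be a full corner of $\TT_P\rtimes\T$ so that bootstrap membership propagates via Takai duality to $\TT_P$ itself. These are checkable, but they are not free.

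The decisive gap is the middle step. You appeal to a Pimsner/Katsura six-term sequence for a $C^*$-correspondence $\mathcal{Y}$ over $B$, and in the same paragraph correctly observe that no such sequence exists off the shelf for subproduct systems. That second observation is not a nuisance to be engineered around: there is no single $C^*$-correspondence over $B$ whose Toeplitz algebra is $\TT_P$ (if there were, the fibres would assemble into a genuine product system, which a proper subproduct system is not), so building a usable long exact sequence here is essentially the theorem you are trying to prove, not an ingredient one can black-box. Your fallback of computing $K_*(\OO_P)$ first and feeding it into~\eqref{eq:CPext} is circular relative to the logical order in~\cite{HaNe22} (there $K_*(\OO_P)$ is \emph{deduced} from the $KK$-equivalence and $[e_0]=(2-m)[1]$), and the independent input you cite only covers $A\bar A=\pm1$. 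The actual proof in~\cite{HaNe22} does not proceed via a Pimsner sequence but exploits the quantum-group symmetry of the subproduct system, ultimately reducing matters to $SU_q(2)$ much as Section~\ref{sec:proof} of the present paper does.

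Your Koszul heuristic for $[e_0]=(2-m)[1]$, by contrast, is correct and can be made exact \emph{without} first knowing $K_0(\TT_P)\cong\Z$. The Euler characteristic $1-m+1$ is the Hilbert-space shadow of the fusion rule $H_1\otimes H_n\cong H_{n+1}\oplus[1]H_{n-1}$, and the $K$-theoretic content is a Murray--von Neumann equivalence inside $M_m(\TT_P)$. Concretely, the row $R:=(S_1,\dots,S_m)$ is a partial isometry with $RR^*=1-e_0$, while the column $V\in M_{m,1}(\TT_P)$ with entries $V_i=a_i\phi^{1/2}S_{m-i+1}$ is an isometry: using $\phi S_j=S_j\gamma(\phi)$, the antidiagonality of $A$, $|a_ia_{m-i+1}|=1$ and the Chebyshev identity $[2]_q[n+1]_q=[n]_q+[n+2]_q$, one checks that $V^*V=1$ and $VV^*=1_m-R^*R$. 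Hence $m[1]=[R^*R]+[VV^*]=[1-e_0]+[1]$, i.e.\ $[e_0]=(2-m)[1]$, with the element $\phi$ doing precisely the bookkeeping you anticipated. But note this yields only the \emph{relation}, not the group: the hard part is proving $K_0(\TT_P)=\Z[1]$ and $K_1(\TT_P)=0$, and that is exactly the step your outline leaves unsupported.
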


As a consequence, the six-term exact sequence induced by the defining extension \eqref{eq:CPext} simplifies notably, and one obtains the following result about the $K$-theory of the Cuntz--Pimsner algebra of a Temperley--Lieb subproduct system. 

\begin{cor}[{\cite[Corollary~4.4]{HaNe22}}]\label{cor:Ktheory}
For every Temperley--Lieb polynomial $P$ in $m$ variables, we have 
    \[K_0(\mathcal{O}_{P}) \cong \mathbb{Z}/(m-2)\mathbb{Z}, \qquad K_1(\mathcal{O}_{P}) \cong \begin{cases}
        \mathbb{Z}, & m=2,\\
        0, & m \geq 3.
    \end{cases}\]
\end{cor}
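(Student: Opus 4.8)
The plan is to feed the $K$-theory of $\mathcal{T}_P$ and of the compacts, both recalled just above, into the six-term exact sequence associated with the defining extension~\eqref{eq:CPext}.

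First I would record the two ends of the sequence. Since $\dim H_n\ge 1$ for every $n$ (the numbers $\dim H_n$ obey the recursion $d_n=m\,d_{n-1}-d_{n-2}$ with $d_0=1$, $d_1=m$, hence are $\ge1$ and unbounded for $m\ge2$), the Fock space $\mathcal{F}_P$ is infinite dimensional, so $\mathcal{K}(\mathcal{F}_P)\cong\mathcal{K}$ and therefore $K_0(\mathcal{K}(\mathcal{F}_P))\cong\mathbb{Z}$, generated by the class of the rank-one projection $e_0$, while $K_1(\mathcal{K}(\mathcal{F}_P))=0$. By the theorem preceding the corollary, the embedding $\mathbb{C}\to\mathcal{T}_P$ is a $KK$-equivalence, so $K_0(\mathcal{T}_P)\cong\mathbb{Z}$, generated by $[1]$, and $K_1(\mathcal{T}_P)=0$; moreover $[e_0]=(2-m)[1]$ in $K_0(\mathcal{T}_P)$. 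Hence, under the identifications $K_0(\mathcal{K}(\mathcal{F}_P))\cong\mathbb{Z}$ via $[e_0]\leftrightarrow1$ and $K_0(\mathcal{T}_P)\cong\mathbb{Z}$ via $[1]\leftrightarrow1$, the map on $K_0$ induced by the inclusion $\mathcal{K}(\mathcal{F}_P)\hookrightarrow\mathcal{T}_P$ is multiplication by $2-m$.

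Plugging this into the six-term exact sequence for~\eqref{eq:CPext}, and using $K_1(\mathcal{K}(\mathcal{F}_P))=K_1(\mathcal{T}_P)=0$, yields
$$
\mathbb{Z}\xrightarrow{\,\cdot(2-m)\,}\mathbb{Z}\longrightarrow K_0(\mathcal{O}_P)\longrightarrow 0\longrightarrow 0\longrightarrow K_1(\mathcal{O}_P)\longrightarrow\mathbb{Z}\xrightarrow{\,\cdot(2-m)\,}\mathbb{Z}.
$$
Exactness immediately gives $K_0(\mathcal{O}_P)\cong\operatorname{coker}(\cdot(2-m))=\mathbb{Z}/(m-2)\mathbb{Z}$ and $K_1(\mathcal{O}_P)\cong\ker(\cdot(2-m))$, which is $0$ for $m\ge3$ and $\mathbb{Z}$ for $m=2$.

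I do not expect a genuine obstacle here: the argument is purely formal once the two cited facts about $\mathcal{T}_P$ are available. The only points deserving a moment's care are checking that $\mathcal{F}_P$ is infinite dimensional, so that the ideal in~\eqref{eq:CPext} is $\mathcal{K}$ with $K_0\cong\mathbb{Z}$ rather than a finite matrix algebra, and correctly pinning down the inclusion-induced map as multiplication by $2-m$ (the sign being irrelevant for the cokernel and the kernel).
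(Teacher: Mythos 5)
Your argument is correct and is exactly the one the paper has in mind: feed $K_*(\mathcal{K}(\mathcal{F}_P))\cong(\mathbb{Z},0)$ and $K_*(\mathcal{T}_P)\cong(\mathbb{Z},0)$ (from the $KK$-equivalence $\mathbb{C}\to\mathcal{T}_P$), together with $[e_0]=(2-m)[1]$, into the six-term sequence for~\eqref{eq:CPext}. The paper simply cites this as \cite[Corollary~4.4]{HaNe22}, and your write-up, including the check that $\mathcal{F}_P$ is infinite dimensional, reproduces that proof.
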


By {\cite[Corollary~2.9]{HaNe22}}, the algebras $ \mathcal{T}_P$ and  $\mathcal{O}_P$ are nuclear for any Temperley--Lieb polynomial. As a consequence, the Toeplitz extension \eqref{eq:CPext}
is semi-split, and hence it induces a six-term exact sequence in $K$-homology. Using the $KK$-equivalence result above, together with similar arguments to those in the proof of \cite[Corollary~ 4.4]{HaNe22}, we obtain the following computation for the $K$-homology groups of the Cuntz--Pimsner algebra:
\begin{cor}\label{cor:Khomology}
    For every Temperley--Lieb polynomial $P$ in $m$ variables, we have 
    \[K^1(\mathcal{O}_{P}) \cong \mathbb{Z}/(m-2)\mathbb{Z}, \qquad K^0(\mathcal{O}_{P}) \cong \begin{cases}
        \mathbb{Z}, & m=2,\\
        0, & m \geq 3.
    \end{cases}\]
\end{cor}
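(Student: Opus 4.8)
The plan is to deduce Corollary~\ref{cor:Khomology} from the already-quoted facts exactly as the authors indicate, by running the six-term $K$-homology exact sequence attached to the semi-split Toeplitz extension \eqref{eq:CPext} and feeding in the $KK$-equivalence $\C\to\TT_P$. First I would observe that, since $\TT_P$ and $\OO_P$ are nuclear (\cite[Corollary~2.9]{HaNe22}), the extension \eqref{eq:CPext} admits a completely positive contractive splitting, so it is a semi-split (``$KK$-invertible-type'') extension and therefore induces the cyclic six-term sequence
\begin{equation*}
\xymatrix{
K^0(\OO_P)\ar[r] & K^0(\TT_P)\ar[r] & K^0(\K(\F_P))\ar[d]\\
K^1(\K(\F_P))\ar[u] & K^1(\TT_P)\ar[l] & K^1(\OO_P)\ar[l]
}
\end{equation*}
in which the horizontal maps $K^\bullet(\TT_P)\to K^\bullet(\K(\F_P))$ are restriction along the inclusion $\K(\F_P)\hookrightarrow\TT_P$.

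Next I would identify the three corner terms. By Morita invariance (stability) of $K$-homology, $K^0(\K(\F_P))\cong K^0(\C)\cong\Z$ and $K^1(\K(\F_P))=0$, the generator of $K^0(\K(\F_P))$ being the class of the rank-one projection $e_0$. By the quoted theorem \cite[Theorem~3.1 and Corollary~4.4]{HaNe22}, the unital embedding $\C\to\TT_P$ is a $KK$-equivalence, hence induces isomorphisms $K^0(\TT_P)\cong K^0(\C)\cong\Z$ and $K^1(\TT_P)=0$; moreover that same theorem records $[e_0]=(2-m)[1]$ in $K_0(\TT_P)$, and I would transport this through $KK$-equivalence (or dualize it) to conclude that the restriction map $K^0(\TT_P)\to K^0(\K(\F_P))$ is, under the identifications $K^0(\TT_P)\cong\Z$ and $K^0(\K(\F_P))\cong\Z$, multiplication by $2-m$. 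Concretely: pairing a generator of $K^0(\TT_P)$ (the class of the unital $*$-homomorphism $\C\to\TT_P$, i.e.\ the trivial Fredholm module) against $[e_0]\in K_0(\TT_P)$ gives $2-m$, while pairing the generator of $K^0(\K(\F_P))$ against $[e_0]$ gives $1$; hence the restriction map sends the $\TT_P$-generator to $(2-m)$ times the $\K$-generator.

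With these inputs the sequence collapses to
\begin{equation*}
\xymatrix{
K^0(\OO_P)\ar[r] & \Z\ar[r]^{\times(2-m)} & \Z\ar[r] & K^1(\OO_P)\ar[r] & 0\ar[r] & 0,
}
\end{equation*}
so that $K^1(\OO_P)\cong\operatorname{coker}(\times(2-m))\cong\Z/(m-2)\Z$ and $K^0(\OO_P)\cong\ker(\times(2-m))$, which is $0$ for $m\ge3$ and $\Z$ for $m=2$. This is precisely the asserted computation. The one point requiring genuine (though routine) care — and the only place where something could go wrong — is pinning down the restriction homomorphism $K^0(\TT_P)\to K^0(\K(\F_P))$ as multiplication by $2-m$: one must check that transporting the $K$-theory relation $[e_0]=(2-m)[1]$ across the $KK$-equivalence $\C\to\TT_P$ and using the index/duality pairing indeed yields this multiplicity, with the correct sign, rather than, say, $m-2$; but since the final groups are $\Z/(m-2)\Z$ and $\ker$, which are insensitive to the sign of the multiplier, even a sign slip here does not affect the statement. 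Everything else is a formal consequence of exactness together with the cited results, which is why the authors summarize it as ``similar arguments to those in the proof of \cite[Corollary~4.4]{HaNe22}''.
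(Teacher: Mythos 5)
Your proposal is correct and follows the same route the paper indicates: run the six-term $K$-homology sequence for the semi-split Toeplitz extension, feed in the $KK$-equivalence $\C\to\TT_P$, and identify the middle map as multiplication by $2-m$ via the relation $[e_0]=(2-m)[1]$ in $K_0(\TT_P)$. The pairing computation you give to pin down that map (composing $\C\xrightarrow{e_0}\K\hookrightarrow\TT_P$ against a generator of $K^0(\TT_P)$) is a correct and complete way to make precise the step the paper leaves implicit.
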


Let us finally remark that the \Cs-algebras $\mathcal{O}_{P}$  are in the UCT class by a two-out-of-three argument (see, e.g., \cite[Proposition~8.8]{Kat04}), since $\mathcal{K}(\mathcal{F}_{P})$ is in the UCT class and $\mathcal{T}_{P}$ is $KK$-equivalent to $\mathbb{C}$.

\subsection{KK-duality}\label{sec:duality}
$KK$-duality is a noncommutative analogue of Spanier--Whitehead duality for finite complexes. In the literature it is sometimes called $K$-theoretic Poincar\'e duality. We define it following \cite{Em03, KaPu, KPW}, but with a small modification in order to avoid a repeated use of flip maps, which are not that innocent when we start to take into account quantum symmetries. 
\begin{definition}\label{def:KK-duality}
Let $A$ and $B$ be separable \Cs-algebras. 
We say that $A$ and $B$ are \emph{KK-dual with dimension shift $i\in \{0,1\}$}, if there is a $K$-homology class $\Delta \in KK_i(A\otimes B,\mathbb{C})$ (\emph{fundamental class}) and a $K$-theory class $\delta \in KK_i(\mathbb{C}, B \otimes A) $ (\emph{dual class}) such that
\[\delta \otimes_{A} \Delta = (-1)^i 1_{KK(B,B)}, \qquad \delta \otimes_{B} \Delta = 1_{KK(A,A)}.\]
\end{definition}
The Kasparov products here are understood as 
\begin{align*}
\delta \otimes_{A} \Delta &:= (\delta \otimes 1_{KK(B,B)})\otimes_{B\otimes A\otimes B}(1_{KK(B,B)}\otimes \Delta),\\
\delta \otimes_{B} \Delta &:= (1_{KK(A,A)}\otimes \delta)\otimes_{A\otimes B\otimes A}(\Delta \otimes 1_{KK(A,A)}).
\end{align*}
We note that a duality pair $(\Delta,\delta)$ as per Definition \ref{def:KK-duality} gives a duality pair $(\Delta,\sigma_*(\delta))$ in the usual way, and vice versa. Here $\sigma\colon B\otimes A\to A\otimes B$ is the flip map. It is known that $\delta$ is unique for $\Delta$ and vice versa, while the collection of duality pairs $(\Delta,\delta)$, when it is nonempty, essentially corresponds to the invertible elements in the ring $KK(A,A),$ see \cite[Corollary 2.9]{BMRS}. Further, one obtains isomorphisms
\begin{equation}\label{eq:slantprod}
- \otimes_A \Delta\colon  K_j(A)\to K^{j+i}(B), \qquad
\sigma_*(\delta) \otimes_B -\colon K^j(B)\to K_{j+i}(A).
\end{equation}

A separable \Cs-algebra satisfying the UCT has a $KK$-dual if and only if its $K$-theory is finitely generated, see \cite[Proposition 5.9]{EmMe}. It follows that $\OO_P$ has a $KK$-dual, and in view of the result of Kaminker and Putnam~\cite{KaPu} it is natural to conjecture that such a dual has the form $\OO_{P^\dagger}$ for a suitable $P^\dagger$. 

\bigskip

\section{Duality classes for Temperley--Lieb subproduct systems}
Consider the involutive operation $\dagger$ on the set of Temperley--Lieb polynomials defined by transposition of matrices, so if $P=\sum_{i,j}a_{ij}X_iX_j$, then $P^\dagger=\sum_{i,j}a_{ji}X_iX_j$. In a basis-independent form this means that given a Temperley--Lieb vector $P\in H\otimes H$ we consider the Temperley--Lieb vector $P^\dagger=P_{21}$ obtained by flipping the factors.

Our goal is to show that $\OO_P$ and $\OO_{P^\dagger}$ are $KK$-dual to each other. Note that the C$^*$-algebras~$\OO_P$ and~$\OO_{P^\dagger}$ are isomorphic, since we can find a unitary matrix $v$ such that $A^t=vAv^t$, but the isomorphism is noncanonical. 

In order to simplify some of the formulas we consider the Temperley--Lieb polynomials in the standard form \eqref{eq:TLPoly_normal}, so for the rest of this section we assume that
$$
P = \sum_{i=1}^{m} a_i X_iX_{m-i+1},    \quad \text{with}\quad \vert a_i a_{m-i+1} \vert =1,
$$
and therefore $P^{\dagger} = \sum_{i=1}^{m} {a}_{m-i+1} X_iX_{m-i+1}$. Let $0<q\le1$ be such that
\begin{equation}\label{eq:q}
q+q^{-1}=\sum^m_{i=1}|a_i|^2.
\end{equation}

We denote by $S_i$ and $T_i$ the generators of~$\mathcal{T}_P$ and~$\mathcal{T}_{P^\dagger}$, respectively, and we use lower-case letters to denote their images in the Cuntz--Pimsner algebras, that is, we shall denote the generators of $\mathcal{O}_P$ by $s_i$ and the generators of $\mathcal{O}_{P^\dagger}$ by $t_i$. 

\subsection{The fundamental class}\label{sec:fundamental}
To construct the fundamental class we use that both~$\mathcal{T}_P$ and~$\mathcal{T}_{P^\dagger}$ can be faithfully represented on the same Fock space $\F_P$. Namely, consider the left and right creation operators on $\F_P$ defined by
$$
L_i\xi=f_{n+1}(\xi_i\otimes\xi),\qquad R_i\xi=f_{n+1}(\xi\otimes\xi_i)
$$
for $\xi\in H_n$ and an orthonormal basis $(\xi_i)_{i=1}^{m}$ in $H_1=H=\C^m$. Thus, $L_i$ are the generators $S_i$ of $\TT_P$, but since we now work with two Toeplitz algebras, we want to distinguish between these generators and concrete operators on $\F_P$.

We have a unitary isomorphism $\F_P\cong\F_{P^\dagger}$ induced by the maps $H^{\otimes n}\to H^{\otimes n}$,
$$
\zeta_1\otimes\zeta_2\otimes\dots\otimes\zeta_n\mapsto\zeta_n\otimes\dots\otimes\zeta_2\otimes\zeta_1.
$$
This unitary isomorphism transfers the generators $T_i$ of $\TT_{P^\dagger}$ into the operators $R_i$, so we have a faithful representation
$$
\TT_{P^\dagger}\to B(\F_P),\quad T_i\mapsto R_i.
$$

Now, the fundamental class is built from the $*$-homomorphisms $\tau_P\colon\mathcal{O}_P\to \mathcal{Q}(\Fock), \tau_{P^\dagger}\colon\mathcal{O}_{P^\dagger}\to \mathcal{Q}(\Fock)$ into the Calkin algebra $\mathcal{Q}(\Fock)$ given by 
\begin{equation}\label{eq:Delta}
\tau_P(s_i):=L_i + \mathcal{K}(\Fock),\qquad \tau_{P^\dagger}(t_i):=R_i + \mathcal{K}(\Fock).
\end{equation}
The images of these maps commute due to the following result, which is a combination of Lemma 4.3, Corollary 4.4 and Remark 4.5 of \cite{HaNe21}.
\begin{lemma}[{\cite{HaNe21}}]
\label{lem:comm_est}
For every $1\leq i,j\leq m$, we have $[L_i,R_j]=0$. Further, if $q<1$, then we can find a constant $C>0$ depending only on $q$ so that for every $n\geq 0$, $$\|[L_i^*,R_j]|_{H_n}\|\leq Cq^n.$$ On the other hand, if $q=1$, there is a constant $C'>0$ such that $\|[L_i^*,R_j]|_{H_n}\|\leq C'n^{-1/2},$ for every $n\geq 0$. Consequently, it holds that $[L,R]\in \mathcal{K}(\Fock)$, for every $L\in \TT_P$ and $R\in \TT_{P^\dagger}.$
\end{lemma}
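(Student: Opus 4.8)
The plan is to establish the three assertions in turn, proceeding from the algebraic identity to the norm estimates and finally to the compactness conclusion.

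\textbf{Step 1: Commutation $[L_i,R_j]=0$.}
This is the purely algebraic part. Both $L_i$ and $R_j$ are defined via the projections $f_n\colon H^{\otimes n}\to H_n$, so for $\xi\in H_n$ one has $L_iR_j\xi=f_{n+2}(\xi_i\otimes f_{n+1}(\xi\otimes\xi_j))$ and $R_jL_i\xi=f_{n+2}(f_{n+1}(\xi_i\otimes\xi)\otimes\xi_j)$. The key point is that for a \emph{standard} subproduct system the projection $f_{n+2}$ factors through $\iota\otimes f_{n+1}$ and through $f_{n+1}\otimes\iota$ (since $H_{n+2}\subseteq H_1\otimes H_{n+1}$ and $H_{n+2}\subseteq H_{n+1}\otimes H_1$), so both expressions collapse to $f_{n+2}(\xi_i\otimes\xi\otimes\xi_j)$. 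Hence $L_iR_j=R_jL_i$ on each $H_n$, and therefore on $\F_P$.

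\textbf{Step 2: The norm estimates on $[L_i^*,R_j]$.}
Here I would invoke the Temperley--Lieb relations directly. Writing $p_n=f_n$ and using the characterization of $\TT_P$ from Theorem~\ref{thm: univ_relations}, the adjoint relation $S_i^*S_j+\phi\sum_{k,l}a_{ik}\bar a_{jl}S_kS_l^*=\delta_{ij}1$ — now read on the Fock space with $S_i=L_i$ — lets me replace $L_i^*L_j$ by a multiple of $\delta_{ij}$ minus a combination of $L_kL_l^*$ weighted by $\phi$. A symmetric relation holds for the right creation operators $R_i$, with the \emph{same} function $\phi$ (because $q$ is determined by $\Tr(A^*A)=q+q^{-1}$, which is insensitive to transposition). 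Combining these, the commutator $[L_i^*,R_j]$ restricted to $H_n$ becomes, after using Step~1 to move the genuine creation operators past one another, an expression in which the coefficient is a difference of values of $\phi$ at consecutive levels, i.e.\ something like $\phi(n+1)-\phi(n)$ times a bounded operator (or $\phi(n)$ times a ``defect'' projection $p_{n+1}$ inside $H_1\otimes H_1\otimes H_{n-1}$ measuring the failure of the two Fock-space inclusions to coincide). Since $\phi(n)=[n]_q/[n+1]_q$, a direct computation gives $|\phi(n+1)-\phi(n)|\le Cq^{n}$ for $q<1$ (the ratios converge geometrically to $q$) and $|\phi(n+1)-\phi(n)|=O(n^{-1})$ for $q=1$; the stated $n^{-1/2}$ bound in the $q=1$ case then follows, perhaps more comfortably, by estimating the Hilbert--Schmidt-type norm of the defect on $H_n$ using the Temperley--Lieb recursion for $\dim H_n$, which grows like $(m/2+\sqrt{m^2/4-1})^n$ away from the classical locus and polynomially when $q=1$. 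I expect \textbf{this estimate to be the main obstacle}: it requires carefully bookkeeping which projections appear and extracting the precise decay rate of $\phi(n+1)-\phi(n)$, rather than just its convergence. Since this is quoted verbatim from \cite{HaNe21}, I would cite Lemma 4.3, Corollary 4.4 and Remark 4.5 there and only sketch the mechanism.

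\textbf{Step 3: From estimates to compactness of $[L,R]$.}
It suffices to prove $[L,R]\in\K(\F_P)$ for $L$ and $R$ running over the generators (and their adjoints) of $\TT_P$ and $\TT_{P^\dagger}$ respectively, since the set of pairs $(L,R)$ with $[L,R]$ compact is closed under sums, products in each variable (using $[L_1L_2,R]=L_1[L_2,R]+[L_1,R]L_2$ and the fact that $\K$ is an ideal), and norm limits. For the generators: $[L_i,R_j]=0$ is trivially compact; $[L_i^*,R_j]$ is compact because by Step~2 its restriction to $H_n$ has norm tending to $0$, and a block-diagonal operator on $\bigoplus_n H_n$ whose blocks have norms going to $0$ with each block finite-dimensional is the norm limit of the finite-rank truncations, hence compact; and $[L_i,R_j^*]=-[L_j^*,R_i]^*$... wait, more carefully, $[L_i,R_j^*]=([R_j,L_i^*])^*=-([L_i^*,R_j])^*$ up to relabeling, so it is compact as well; similarly $[L_i^*,R_j^*]=([R_j,L_i])^*=0$. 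This disposes of all four generator-level commutators, and the closure argument upgrades this to arbitrary $L\in\TT_P$, $R\in\TT_{P^\dagger}$, completing the proof. In particular the maps $\tau_P$ and $\tau_{P^\dagger}$ of~\eqref{eq:Delta} have commuting ranges in the Calkin algebra $\Q(\F_P)$.
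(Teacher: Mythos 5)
Your Step 1 is correct and gives a clean proof of $[L_i,R_j]=0$: the point that $f_{n+2}=f_{n+2}(\iota\otimes f_{n+1})=f_{n+2}(f_{n+1}\otimes\iota)$ on the relevant domains (because the subproduct system is standard) is exactly what makes both compositions collapse to $f_{n+2}(\xi_i\otimes\xi\otimes\xi_j)$. Your Step 3 is also correct, including the observation that $[L_i^*,R_j]$ is block-diagonal on $\bigoplus_n H_n$ (since $L_i^*R_j$ and $R_jL_i^*$ both preserve $H_n$), that $[L_i,R_j^*]=-[L_i^*,R_j]^*$ and $[L_i^*,R_j^*]=0$, and that compactness propagates to all of $\TT_P\times\TT_{P^\dagger}$ via the Leibniz rule and the fact that $\K$ is an ideal. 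This matches the paper's posture, since the paper itself gives no proof of the estimates in Step 2 but simply cites Lemma 4.3, Corollary 4.4 and Remark 4.5 of~\cite{HaNe21} (with a remark on why the restriction to a special class of polynomials there is inessential), which is what you also do.

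However, the heuristic you offer in Step 2 to explain the mechanism is misleading and would not yield the stated bounds if taken at face value. The rates in the lemma do not come from $\phi(n+1)-\phi(n)$: a direct computation gives $\phi(n+1)-\phi(n)=O(q^{2n})$ for $q<1$ and $O(n^{-2})$ for $q=1$ (not $O(n^{-1})$ as you wrote), both much faster than the $q^n$ and $n^{-1/2}$ asserted in the lemma. The slower decay comes from the \emph{projection defects}, i.e., the failure of $f_{n+1}$ to factor through $f_k\otimes 1$ or $1\otimes f_k$ at intermediate levels $k$; this is precisely what Lemma~4.2 of~\cite{HaNe21} controls (the version reproduced as Lemma~\ref{lem:Fredholm} of the present paper gives $\|(f_{k+1}\otimes 1_{n-k})(1\otimes f_n)-f_{n+1}\|\le C_1q^k$), and the commutator $[L_i^*,R_j]|_{H_n}$ is dominated by a sum of such defects, not by a single coefficient difference. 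Since you defer to~\cite{HaNe21} for the actual estimate anyway, this does not break your argument, but you should either drop the $\phi(n+1)-\phi(n)$ speculation or replace it with a pointer to the defect-projection estimate, as the given numerology is internally inconsistent with the claimed rates.
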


It should be said that formally the estimates in \cite{HaNe21} are proved for a particular class of Temperley--Lieb polynomials. However, the key estimate (\cite[Lemma 4.3]{HaNe21}) can be interpreted as a result about the Temperley--Lieb algebras and as such remains true for arbitrary Temperley--Lieb polynomials.

As a result, since $\mathcal{O}_P$ and $\mathcal{O}_{P^\dagger}$ are nuclear, we can multiply $\tau_P\colon\mathcal{O}_P\to \mathcal{Q}(\Fock), \tau_{P^\dagger}\colon\mathcal{O}_{P^\dagger}\to \mathcal{Q}(\Fock)$ to obtain a $*$-homomorphism 
\begin{equation}\label{eq:fundext}
\tau\colon \mathcal{O}_{P}\otimes \mathcal{O}_{P^\dagger} \to \mathcal{Q}(\Fock)
\end{equation}
which yields an extension class $[\tau]\in \Ext(\mathcal{O}_{P}\otimes \mathcal{O}_{P^\dagger},\C)\cong KK_1(\mathcal{O}_{P}\otimes \mathcal{O}_{P^\dagger},\mathbb C).$

\begin{definition}
\label{def:fundCl}
The \emph{fundamental class} $\Delta$ is the image of $[\tau]$ in $KK_1(\mathcal{O}_{P}\otimes \mathcal{O}_{P^\dagger},\mathbb C).$ 
\end{definition}

In Section \ref{sec:Fredholm}, for $q<1$, we will construct a Fredholm module representative of $\Delta$ using KMS-states for $\mathcal{O}_P$ and $\mathcal{O}_{P^\dagger}$. 

\subsection{The dual class}\label{sec:dual}
We aim to construct a $*$-homomorphism $C_0(\mathbb R)\to M_2(\mathcal{O}_{P^\dagger} \otimes \mathcal{O}_{P})$ which under Bott periodicity yields a $K$-theory duality class in $KK_1(\mathbb C, \mathcal{O}_{P^\dagger}\otimes \mathcal{O}_{P}).$ To this end, we consider the following operator
\begin{equation}\label{eq:w}
w : = \sum_{i=1}^m \begin{pmatrix} t_i^* \otimes s_i &  q^{1/2} a_i t_i \otimes  s_{m-i+1} \\
q^{1/2} \overline{a}_i t_{i}^* \otimes s_{m-i+1}^* & q a_i \overline{a}_{m-i+1} t_{i} \otimes s_{i}^*
\end{pmatrix} \in M_2(\mathcal{O}_{P^\dagger}\otimes \mathcal{O}_{P}).
\end{equation}
\begin{lemma}\label{lem:unitary}
The operator $w$ is unitary.
\end{lemma}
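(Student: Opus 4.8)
The plan is to verify directly that $w^*w = ww^* = 1$ in $M_2(\mathcal O_{P^\dagger}\otimes\mathcal O_P)$ by expanding the matrix products and repeatedly invoking the Cuntz--Pimsner relations for $\mathcal O_P$ and $\mathcal O_{P^\dagger}$. Recall that these relations are the quotient versions of those in Theorem~\ref{thm: univ_relations}: in $\mathcal O_P$ we have $\sum_i s_is_i^*=1$, $\sum_{i,j}a_{ij}s_is_j=0$, and $s_i^*s_j + \phi_\infty\sum_{k,l}a_{ik}\bar a_{jl}s_ks_l^* = \delta_{ij}1$, where $\phi_\infty = q$ is the value of $\phi$ at infinity (the image of $\phi$ in $c/(c\cap\mathcal K(\mathcal F_P)) = \mathbb C$); similarly for $\mathcal O_{P^\dagger}$ with generators $t_i$ and matrix $A^t$, i.e.\ the coefficients $a_{ji}$. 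In the standard form \eqref{eq:TLPoly_normal} the polynomial has $a_{i,m-i+1}=a_i$ and all other $a_{ij}=0$, so the quadratic relation reads $\sum_i a_i s_is_{m-i+1}=0$ and the $s^*s$-relation becomes $s_i^*s_j + q\sum_k a_{i,k}\bar a_{j,k}\, s_k s_{m-k+1}(s_l\cdots)^*$ — more precisely, writing $k' = m-k+1$, $s_i^*s_j + q\,\delta_{i j}'$-type terms; one should record the precise simplified form of the relation in the standard basis at the start of the computation, since that is what makes the $2\times2$ bookkeeping tractable.

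Concretely, I would first set up notation: write $w = \sum_i w^{(i)}$ where $w^{(i)}$ is the $i$-th matrix summand, and note that the entries of $w^{(i)}$ are elementary tensors. Then $w^*w = \sum_{i,j} (w^{(i)})^* w^{(j)}$, and each $(w^{(i)})^*w^{(j)}$ is a $2\times2$ matrix whose entries are sums of four terms of the form $(\text{scalar})\cdot (t\text{-word})^*(t\text{-word})\otimes (s\text{-word})^*(s\text{-word})$. The $(1,1)$ entry of $w^*w$ collects $\sum_{i,j}\big(a\text{-scalars}\big)\, t_it_j^*\otimes s_i^*s_j + (\cdots)\, t_i^*t_j\otimes s_is_j^* + \dots$; one then pushes all the $s^*s$ and $t^*t$ pieces through the relation $s_i^*s_j = \delta_{ij} - q\sum_k a_{ik}\bar a_{jk}\,s_ks_{?}^{*}$ (standard form) so that everything becomes a combination of $ss^*$ and $tt^*$ monomials, and uses $\sum_i s_is_i^* = 1$, $\sum_i t_it_i^*=1$ together with the quadratic relations $\sum a_i s_is_{m-i+1}=0$ and $\sum a_{m-i+1}t_it_{m-i+1}=0$ to cancel the leftover terms. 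The factors of $q^{1/2}$, $q$ in the definition of $w$ are precisely calibrated so that the $\phi_\infty=q$ appearing in the Cuntz--Pimsner relation is absorbed and the cross-terms telescope to zero; I would verify this entry by entry, doing $ww^*$ by the symmetric computation (or by observing that the same manipulation applies with the roles of $s^*s$ vs $ss^*$ interchanged). It is essentially a finite, if intricate, algebraic identity.

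The main obstacle is the combinatorial bookkeeping: there are four $2\times2$ entries in each of $w^*w$ and $ww^*$, each a double sum over $i,j$ of four monomials, and one must apply the quadratic relation and the $s^*s$-relation in the right order and keep track of the index reflection $i\mapsto m-i+1$ and of which $|a_i|^2$ versus $a_i\bar a_{m-i+1}$ scalars show up — with \eqref{eq:q}, $\sum_i|a_i|^2 = q+q^{-1}$, entering at the step where the diagonal terms are summed. I would organize this by treating the $s$-algebra and $t$-algebra manipulations in parallel and noting that, because $P^\dagger$ is obtained from $P$ by transposition (equivalently $P^\dagger = P_{21}$), the relations in $\mathcal O_{P^\dagger}$ are the relations in $\mathcal O_P$ with $a_i\leftrightarrow a_{m-i+1}$; this symmetry cuts the work roughly in half. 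A sanity check worth running first is the case $m=2$, $P = q^{-1/2}X_1X_2 - q^{1/2}X_2X_1$ (so $\mathcal O_P \cong \mathcal O_{P^\dagger}$ and, for $q=1$, equals $C(S^3)$), where the identity $w^*w=ww^*=1$ can be confirmed by a short direct computation and confirms the placement of every $q$-power.
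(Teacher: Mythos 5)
Your approach—direct verification of $w^*w=ww^*=1$ in the quotient using the Cuntz--Pimsner relations—is a genuinely different route from the paper's. The paper even flags this possibility in the remark immediately after the statement of Lemma~\ref{lem:unitary} ("This can be checked by a direct computation\dots but later we will obtain a stronger result with a more conceptual proof"), and the conceptual proof it gives is the following: the operator $\tilde W$ of \eqref{eq:tildeW} is constructed from the fusion rules \eqref{eq:fusion} as a partial isometry on $(\F_P\otimes\F_P)^2$ whose defect projections are given by \eqref{eq:iso-coiso}; an explicit expansion of $\tilde W$ in terms of $R_i,L_i,\phi$ is then computed, producing a lift $W\in\TT_{P^\dagger}\otimes\TT_P$ of $w$, and since the defect projections $e_0\otimes 1$ and $1\otimes e_0$ both die in the quotient $\OO_{P^\dagger}\otimes\OO_P$, the image $w$ is unitary. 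The paper's route costs more setup (one must establish the representation-theoretic picture and the explicit formula for $\tilde W$), but it buys the stronger fact that $w$ lifts to a concrete partial isometry in the Toeplitz tensor product—information used repeatedly in the proof of the duality—and it simultaneously explains \emph{why} the coefficients $q^{1/2}a_i$, $qa_i\bar a_{m-i+1}$ in \eqref{eq:w} are what they are. Your direct approach buys elementariness and avoids the fusion-rule machinery, at the cost of the combinatorial bookkeeping you describe. One small correction to your sketch: the $s^*s$-relation you quote should, in the standard form, read $s_i^*s_j + q\,a_i\bar a_j\,s_{m-i+1}s_{m-j+1}^*=\delta_{ij}1$ (coming from $a_{ik}=a_i\delta_{k,m-i+1}$), not the version with a single sum over $k$ that you wrote; with this in place the calculation does close, and your $m=2$, $q=1$ sanity check is a good idea.
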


This can be checked by a direct computation using the relations in $\OO_P$ and $\OO_{P^\dagger}$, but later we will obtain a stronger result with a more conceptual proof.

\begin{definition}\label{def:dualCl}
The \emph{dual class} $\delta\in KK_1(\mathbb C, \mathcal{O}_{P^\dagger}\otimes \mathcal{O}_{P})$ is defined as $\beta \otimes_{C_0(\mathbb R)} [\overline{w}],$ where $\beta\in KK_1(\mathbb C, C_0(\mathbb R))$ is the Bott class and $[\overline{w}]\in KK(C_0(\mathbb R), \mathcal{O}_{P^\dagger}\otimes \mathcal{O}_{P})$ is given by the $*$-homomorphism $\overline{w}\colon  C_0(\mathbb R)\to M_2(\mathcal{O}_{P^\dagger} \otimes \mathcal{O}_{P}),$ 
$$
\overline{w}(z-1_{C(\mathbb T)})=w^*-1,
$$ 
and where $z-1_{C(\mathbb T)}$ is viewed as a function in $C(\mathbb T)$ that generates a copy of $C_0(\mathbb R)$.
\end{definition}

Our convention for the Bott class is that it is inverse to the class in $KK_1(C_0(\R),\C)\cong \mathrm{Ext}(C_0(\R),\C)$ obtained by restricting the usual Toeplitz extension
\[
\xymatrix{
0 \ar[r] & \K \ar[r] & \TT \ar[r] &C(\T) \ar[r] &0}
\]
to $C_0(\R)\subset C(\T)$.

From now on, when we want to define a $*$-homomorphism of $C_0(\R)$, we will usually write its unital extension to $C(\T)$ by specifying the image of the generator $z\in C(\T)$.

\subsection{An extension representing the Kasparov product}

Consider the element 
\[
\beta^{-1}\otimes\delta\otimes_{\OO_{P^\dagger}}\Delta\in KK_1(C_0(\R)\otimes\OO_P,\OO_P)\cong \mathrm{Ext}(C_0(\R)\otimes\OO_P,\OO_P).
\]
It is not difficult to see that by definition it is represented by the extension with Busby invariant
\[
C_0(\R)\otimes\OO_P\to M_2(\QQ(\F_P\otimes\OO_P))\cong \QQ(\F_P^2\otimes\OO_P),
\]
\begin{equation}\label{eq:ext}
z\otimes1\mapsto (\tau_{P^\dagger}\otimes\iota)(w^*),\quad 1\otimes s_i\mapsto\begin{pmatrix}
    \tau_P(s_i)\otimes 1 & 0\\ 0 & \tau_P(s_i)\otimes1
\end{pmatrix},
\end{equation}
where $\F_P\otimes\OO_P$ denotes the right C$^*$-Hilbert $\OO_P$-module defined as the exterior tensor product of the C$^*$-Hilbert modules~$\F_P$ and~$\OO_P$ over $\C$ and $\OO_P$, respectively, and we use that $\QQ(\F_P)\otimes\OO_P$ can be viewed as a subalgebra of $\QQ(\F_P\otimes\OO_P)$. We need to show that the class of this extension equals $\beta^{-1}\otimes1_{KK(\OO_P,\OO_P)}$, that is, it coincides with the class of the extension with Busby invariant
\[
C_0(\R)\otimes\OO_P\to\QQ(\ell^2(\Z_+)\otimes\OO_P),
\]
\begin{equation} \label{eq:what-we-want}
z\otimes 1\mapsto u\otimes 1,\quad 1\otimes s_i\mapsto 1\otimes s_i,
\end{equation}
where $u\colon \ell^2(\Z_+)\to\ell^2(\Z_+)$ is the shift to the right. We will do this in the next section using
indirect methods.

\subsection{Comparison with the Cuntz algebra case}\label{sec:compare}

In this subsection we compare our construction and the problem of identifying the extension classes with the Cuntz--Krieger algebra case studied by Kaminker and Putnam~\cite{KaPu}. This will not play any role in the subsequent considerations.

To simplify matters we will only consider the Cuntz algebras $\OO_m$, $m\ge2$. Denote by $s_i$ the generators of $\OO_m$ and by $S_i$ the generators of the Cuntz--Toeplitz algebra $\TT_m$. Consider also the full Fock space $\F_m:=\C\oplus\C^m\oplus(\C^m\otimes\C^m)\oplus\dots$ and the left and right creation operators $L_i$ and $R_i$ on $\F_m$. These operators define homomorphisms $\tau_m,\tau_m^\dagger\colon \OO_m\to \QQ(\F_m)$ with commuting images.

Our Definitions~\ref{def:fundCl} and~\ref{def:dualCl} are motivated by the duality classes $\delta$ and $\Delta$ for $\OO_m$ introduced in~\cite{KaPu}, the main difference is that Kaminker and Putnam use the unitary
$$
w_m:=\sum^m_{i=1}s_i^*\otimes s_i\in\OO_m\otimes\OO_m
$$
to define $\delta$. To prove that $(1_{KK(\OO_m,\OO_m)}\otimes\delta) \otimes_{\OO_m\otimes\OO_m\otimes\OO_m} (\Delta\otimes1_{KK(\OO_m,\OO_m)}) = 1_{KK(\OO_m,\OO_m)}$ one has to show that the class of the extension with Busby invariant
$$
C_0(\R)\otimes\OO_m\to \QQ(\F_m\otimes\OO_m),
$$
\begin{equation}\label{eq:ext-KP}
z\otimes1\mapsto (\tau_m^\dagger\otimes\iota)(w^*_m),\quad 1\otimes s_i\mapsto \tau_m(s_i)\otimes 1,
\end{equation}
equals $\beta^{-1}\otimes1_{KK(\OO_m,\OO_m)}$. This can be done along the following lines, cf.~\cite{KaPu,KPW}.

First, we have a unitary isomorphism $U\colon \F_m\otimes\OO_m\to \ell^2(\Z_+)\otimes\OO_m$ of right C$^*$-Hilbert $\OO_m$-modules defined by
$$
U(\xi_{i_1}\otimes\dots\otimes\xi_{i_n}\otimes a):=\delta_n\otimes s_{i_1}\dots s_{i_n}a.
$$
The unitary $(\tau_m^\dagger\otimes\iota)(w_m)\in\QQ(\F_m\otimes\OO_m)$ lifts to the coisometry
$$
W_m:=\sum^m_{i=1}R_i^*\otimes s_i\in M(\K(\F_m)\otimes\OO_m),
$$
and then $U$ intertwines $W_m^*$ with $u\otimes 1\in M(\K(\ell^2(\Z_+))\otimes\OO_m)$.

The unitary $U$ does not intertwine $\tau_m(s_i)\otimes 1$ with $1\otimes s_i$, however. In order to deal with this, consider the automorphism $\theta$ of $C(\T)\otimes\OO_m$ defined~by
$$
\theta(z\otimes 1):=z\otimes1,\quad \theta(1\otimes s_i):=\bar z\otimes s_i.
$$
It is not difficult to show that the restriction of $\theta$ to $C_0(\R)\otimes\OO_m$ is homotopic to the identity automorphism, see the proofs of \cite[Theorem~5.1]{PZ} or \cite[Lemma~7.2]{KPW}. It follows that replacing homomorphism~\eqref{eq:ext-KP} by its composition with~$\theta$ does not change the class in $KK_1(C_0(\R)\otimes\OO_m,\OO_m)$. In other words, instead of~\eqref{eq:ext-KP} we can consider the homomorphism
$$
z\otimes1\mapsto (\tau_m^\dagger\otimes\iota)(w^*_m),\quad 1\otimes s_i\mapsto \sum^m_{j=1}\tau_m^\dagger(s_j^*)\tau_m(s_i)\otimes s_j.
$$
Now, a simple computation shows that $U$ intertwines the lift $\sum_j R_j^*L_i\otimes s_j$ of $\sum_j \tau_m^\dagger(s_j^*)\tau_m(s_i)\otimes s_j$ with $1\otimes s_i$, and we are done.

\smallskip

Returning to the Temperley--Lieb polynomials, one can try to apply a similar strategy. As we will see in the next section, the unitary $(\tau_{P^\dagger}\otimes\iota)(w)$ lifts to a coisometry with the source projection $1-\begin{pmatrix}
    e_0\otimes 1 & 0 \\ 0 & 0
\end{pmatrix}$. Using this we can define an isometry $\ell^2(\Z_+)\otimes\OO_P\to\F_P^2\otimes\OO_P$ intertwining $u\otimes 1$ with the lift of $(\tau_{P^\dagger}\otimes\iota)(w^*)$. However, this isometry is far from being unitary. We also do not see an easy homotopy/perturbation argument that would allow us to compare the homomorphisms of $\OO_P$ into $\QQ(\ell^2(\Z_+)\otimes\OO_P)$ and $\QQ(\F_P^2\otimes\OO_P)$ by means of this isometry, despite the fact that an analogue of the automorphism~$\theta$ still makes sense. 

\bigskip

\section{Proof of duality}\label{sec:proof}

\subsection{Quantum symmetries of Temperley--Lieb polynomials}\label{sec:quantum_sym}

For every Temperley--Lieb polynomial $P=\sum_{i,j}a_{ij}X_iX_j$ we consider the compact quantum group $\GA$ of unitary transformations leaving $P$ invariant up to a phase factor. This quantum group was introduced by Mrozinski in~\cite{Mro14}. As in~\cite{HaNe22}, we rephrase its definition as follows.

\begin{definition} \label{def:GA}
We define the algebra $\C[\GA]$ of regular functions on $\GA$ as the universal unital $*$-algebra generated by a unitary element~$d$ and elements $v_{ij}$, $1 \leq i,j \leq m$, such that
\[V = (v_{ij})_{i,j}\ \ \text{is unitary and}\ \ VAV^t = dA.\]
This is a Hopf $*$-algebra with comultiplication defined on generators as
\[ \Delta(d) = d\otimes d, \quad \Delta(v_{ij}) = \sum_{k} v_{ik}\otimes v_{kj}.\]
\end{definition}

We will work only with reduced forms of quantum groups, so by $C(\GA)$ we mean the norm completion of $\C[\GA]$ in the GNS-representation defined by the Haar state. Generally, our notation and conventions for quantum groups follow~\cite{HaNe22}, for more information on the subject the reader can consult~\cite{NT}.

A unitary representation of $\GA$ on a Hilbert space $H'$ is a unitary $U\in M(\K(H')\otimes C(\GA))$ such that
$(\iota\otimes\Delta)(U)=U_{12}U_{13}$. Therefore $V$ is a unitary representation of $\GA$ on $\C^m$, while $d$ is a one-dimensional unitary representation. By definition we have
\begin{equation*} \label{eq:poly-symmetry}
V_{13}V_{23}(P\otimes 1) = P\otimes d \quad \mbox{in}\quad \C^m \otimes\C^m \otimes \C[\GA].
\end{equation*}
In other words, the embedding $\C\to\C^m\otimes\C^m$, $1\mapsto P$, intertwines the unitary representations~$d$ and $V\otimes V$ of $\GA$, which is the precise meaning of that the action of $\GA$ rescales $P$ by the character $d$.

For every $n\ge0$, the Hilbert space $H_n=f_n(\C^m)^{\otimes n}$ carries an irreducible representation $U_n$ of $\GA$ obtained by restriction from $V^{\otimes n}$. We will write $H_n[1]$ for the Hilbert space $H_n$ viewed as the underlying space of $U_n\otimes d$. In a similar way we will write $[1]H_n$ for the Hilbert space~$H_n$ viewed as the underlying space of $d\otimes U_n$. Then the fusion rules for $\GA$ are $H_{n}[1]\cong [1]H_{n}$,
\begin{equation}\label{eq:fusion}
H_n\otimes H_1\cong H_{n+1}\oplus H_{n-1}[1],\qquad H_1\otimes H_n\cong H_{n+1}\oplus [1]H_{n-1},
\end{equation}
with $H_{-1}=0$. We remark that it is the operator $A^t\bar A^t$ that intertwines $V\otimes d$ with $d\otimes V$, see~\cite[Lemma~2.3]{HaNe22}, and hence the restriction of $(A^t\bar A^t)^{\otimes n}$ to $H_n$ gives an isomorphism $H_{n}[1]\cong [1]H_{n}$.

It follows from \eqref{eq:fusion} that if we let $\F_{P,+}:=\bigoplus^\infty_{n=1}H_n\subset\F_P$, then we get a unitary
$$
\tilde W\colon (\F_{P,+}\otimes\F_P)\oplus(\F_P\otimes \F_P)\to (\F_P\otimes\F_{P,+})\oplus(\F_P\otimes \F_P)
$$
defined as the composition of the isomorphisms
\begin{multline*}
(\F_{P,+}\otimes\F_P)\oplus(\F_P[1]\otimes \F_P)\cong (\F_P\otimes H_1)\otimes\F_P\cong \F_P\otimes (H_1\otimes\F_P)\\ \cong (\F_P\otimes\F_{P,+})\oplus(\F_P\otimes [1]\F_P)=(\F_P\otimes\F_{P,+})\oplus(\F_P [1]\otimes\F_P).
\end{multline*}
We will rather think of it as a partial isometry on $(\F_{P}\otimes\F_P)\oplus(\F_P\otimes \F_P)$ such that
\begin{equation}\label{eq:iso-coiso}
1-\tilde W^*\tilde W=\begin{pmatrix}
    e_0\otimes 1 & 0\\ 0 & 0
\end{pmatrix},\qquad 1-\tilde W\tilde W^*=\begin{pmatrix}
    1\otimes e_0 & 0\\ 0 & 0
\end{pmatrix}.
\end{equation}

This partial isometry was introduced in~\cite{ArKa21} for a particular class of Temperley--Lieb polynomials.
To get an explicit formula for it we need to fix the unitary isomorphisms~\eqref{eq:fusion}.

Consider the embedding maps
$$
i_R\colon H_{n}\to H_{n-1}\otimes H_1\qquad\text{and}\qquad i_L\colon H_n\to H_1\otimes H_{n-1}.
$$
Define equivariant isometric maps
$$
V_R\colon H_n[1]\to H_{n+1}\otimes H_1\qquad\text{and}\qquad V_L\colon [1]H_n\to H_1\otimes H_{n+1}
$$
by the following formulas, cf.~\cite[Section 4]{HaNe22}:
$$
V_R:=([2]_q\phi(n+1))^{1/2}(f_{n+1}\otimes1)(1^{\otimes n}\otimes v),
$$
$$
V_L:=([2]_q\phi(n+1))^{1/2}(1\otimes f_{n+1})(v\otimes 1^{\otimes n}),
$$
where $v\colon \C\to H_1\otimes H_1$ is the isometry $1\mapsto  [2]_q^{-1/2}P$ and $\phi$ is given by~\eqref{eq:phi(n)}. Then
\begin{equation}\label{eq:tildeW}
\tilde W=\begin{pmatrix}
    (1\otimes i_L^*)(i_R\otimes1) & (1\otimes i_L^*)(V_R\otimes1) \\
    (1\otimes V_L^*)(i_R\otimes1) & (1\otimes V_L^*)(V_R\otimes1)
  \end{pmatrix}.    
\end{equation}

\begin{lemma}[cf.~{\cite[Lemma~6.4]{ArKa21}}]
If $P = \sum_{i=1}^{m} a_i X_iX_{m-i+1}$, $\vert a_i a_{m-i+1} \vert =1$, then \[ \tilde W = \sum_{i=1}^m \begin{pmatrix} R_i^* \otimes L_i &   a_i \phi^{1/2} R_i \otimes  L_{m-i+1} \\
 \overline{a}_i R_{i}^* \otimes L_{m-i+1}^*\phi^{1/2} & a_i \overline{a}_{m-i+1}\phi^{1/2} R_{i} \otimes L_{i}^* \phi^{1/2},
\end{pmatrix}.
\]
\end{lemma}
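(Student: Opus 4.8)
The plan is to expand the right-hand side of~\eqref{eq:tildeW} by writing the embeddings $i_R$, $i_L$ and the isometries $V_R$, $V_L$ explicitly in terms of the left and right creation operators $L_i$, $R_i$, and then collapsing the resulting double sums using orthonormality of the basis $(\xi_i)_{i=1}^m$. Throughout, the middle isomorphism $(\F_P\otimes H_1)\otimes\F_P\cong\F_P\otimes(H_1\otimes\F_P)$ entering the definition of $\tilde W$ is read as the obvious associativity identification, so that the four entries of~\eqref{eq:tildeW} are literally the composites displayed there.

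For the embeddings, denote by $\langle\xi_i|\colon H_1\to\C$ the functional $\zeta\mapsto\langle\xi_i,\zeta\rangle$ and by $|\xi_i\rangle\colon\C\to H_1$ its adjoint. On every fibre $R_i=i_R^*(\iota\otimes|\xi_i\rangle)$ and $L_i=i_L^*(|\xi_i\rangle\otimes\iota)$, so, using $\sum_i|\xi_i\rangle\langle\xi_i|=\iota_{H_1}$, one gets $i_R=\sum_i(\iota\otimes|\xi_i\rangle)R_i^*$ and $i_L^*=\sum_iL_i(\langle\xi_i|\otimes\iota)$. For $V_R$ and $V_L$, substitute $v(1)=[2]_q^{-1/2}P=[2]_q^{-1/2}\sum_ia_i\,\xi_i\otimes\xi_{m-i+1}$ into their defining formulas; since $i\mapsto m-i+1$ is a bijection, the vectors $\xi_i\otimes\xi_{m-i+1}$ are orthonormal, so $\|P\|^2=\sum_i|a_i|^2=q+q^{-1}=[2]_q$ by~\eqref{eq:q}, $v$ is an isometry, and the factor $[2]_q^{1/2}$ in $V_R$, $V_L$ cancels the $[2]_q^{-1/2}$. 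One obtains, on $H_n$, $V_R=\sum_ia_i\,(\phi^{1/2}R_i)\otimes|\xi_{m-i+1}\rangle$ and $V_L=\sum_ia_i\,|\xi_i\rangle\otimes(\phi^{1/2}L_{m-i+1})$, where the scalar $\phi(n+1)^{1/2}$ in the definitions is recorded as the operator $\phi^{1/2}$ evaluated on the target fibre $H_{n+1}$; taking the adjoint gives $V_L^*=\sum_i\bar a_i\,\langle\xi_i|\otimes(L_{m-i+1}^*\phi^{1/2})$.

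Now plug these four expressions into the four entries of~\eqref{eq:tildeW}. Each entry becomes a sum over two indices $i,j$ which $\langle\xi_i|\,|\xi_j\rangle=\delta_{ij}$ collapses to a single sum, after which one collects the $\phi^{1/2}$-factors. For instance $(1\otimes i_L^*)(i_R\otimes1)=\sum_{i,j}R_i^*\otimes L_j\langle\xi_j|\,|\xi_i\rangle=\sum_iR_i^*\otimes L_i$, while in $(1\otimes V_L^*)(V_R\otimes1)$ the index of $V_L^*$ is forced to equal $m-i+1$, producing the coefficient $a_i\bar a_{m-i+1}$ and a $\phi^{1/2}$ on each leg, i.e.\ $\sum_ia_i\bar a_{m-i+1}\phi^{1/2}R_i\otimes L_i^*\phi^{1/2}$; the two off-diagonal entries come out in exactly the same way. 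As a sanity check, the explicit matrix visibly satisfies~\eqref{eq:iso-coiso}, since in its first column $R_i^*$ kills $H_0$ on the source side and in its first row $L_i$ lands in $\F_{P,+}$ on the range side. The computation is routine; the only two points that deserve attention are the identification of $i_R$, $i_L$ with sums of creation operators, which relies on the standard form of the subproduct system ($H_{n+1}\subseteq(H_n\otimes H_1)\cap(H_1\otimes H_n)$ with $f_{n+1}$ the orthogonal projection), and — the single genuine subtlety, and the step most likely to introduce placement errors — keeping track of whether each $\phi^{1/2}$ lands to the left or to the right of its neighbouring creation operator, which is the one place where the noncommutativity of $\phi$ with the $L_i$, $R_i$ actually matters.
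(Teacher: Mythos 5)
Your proposal is correct and follows essentially the same route as the paper: both arguments express the embeddings $i_R$, $i_L$ and the equivariant isometries $V_R$, $V_L$ in terms of the left and right creation operators $L_i$, $R_i$ (the paper does this pointwise on basis vectors, you do it at the operator level with bra-ket notation), substitute into the four entries of~\eqref{eq:tildeW}, and collapse the double sums using orthonormality of $(\xi_i)_i$. You also correctly identify the one subtle point, the placement of $\phi^{1/2}$ relative to the creation operators, and handle it in a way that matches the paper's $\phi(k)^{1/2}$ bookkeeping.
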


\bp
First, we claim that $V_L^*(\xi_i\otimes\eta)=\phi(n+1)^{1/2}\bar a_i S_{m-i+1}^*\eta$ for $\eta\in H_{n+1}$.
Indeed, if $\zeta\in [1]H_n$, then
\begin{align*}
 (V_L^*(\xi_i\otimes\eta),\zeta)&=\Big(\xi_i\otimes\eta,\phi(n+1)^{1/2}(1\otimes f_{n+1})\sum_ka_k\xi_k\otimes\xi_{m-k+1}\otimes\zeta\Big) \\
   & =\phi(n+1)^{1/2}(\eta, a_i S_{m-i+1}\zeta) =\phi(n+1)^{1/2}\bar a_i(S_{m-i+1}^*\eta, \zeta),
\end{align*}
proving the claim.

We are going to use also that
$$
i_R\xi=\sum_i R_i^*\xi\otimes\xi_i\quad\text{for}\quad \xi\in H_n,\qquad
i_L^*\xi=f_n\xi\quad\text{for}\quad \xi\in H_1\otimes H_{n-1}.
$$

For $\xi\in H_n$, $\eta\in H_k$, we compute:
\begin{align*}
(1\otimes i_L^*)(i_R\otimes1)(\xi\otimes\eta)&=\sum_iR_i^*\xi\otimes i_L^*(\xi_i\otimes\eta)=\sum_i(R_i^*\otimes S_i)(\xi\otimes\eta),\\
(1\otimes V_L^*)(i_R\otimes1)(\xi\otimes\eta)&=\sum_iR_i^*\xi\otimes V_L^*(\xi_i\otimes\eta)=\phi(k)^{1/2}\sum_i\bar a_i(R_i^*\otimes S^*_{m-i+1})(\xi\otimes\eta),\\
(1\otimes i_L^*)(V_R\otimes1)(\xi\otimes\eta)&=\phi(n+1)^{1/2}\sum_ia_if_{n+1}(\xi\otimes\xi_i)\otimes i_L^*(\xi_{m-i+1}\otimes\eta) \\
&=\phi(n+1)^{1/2}\sum_i a_i(R_i\otimes S_{m-i+1})(\xi\otimes\eta),\\
(1\otimes V_L^*)(V_R\otimes1)(\xi\otimes\eta)&=\phi(n+1)^{1/2}\sum_ia_if_{n+1}(\xi\otimes\xi_i)\otimes V_L^*(\xi_{m-i+1}\otimes\eta) \\
&=\phi(n+1)^{1/2}\phi(k)^{1/2}\sum_i a_i\bar a_{m-i+1}(R_i\otimes S_i^*)(\xi\otimes\eta).
\end{align*}
The lemma follows now from~\eqref{eq:tildeW}.
\ep

As $\phi(n)\to q$, we see in particular that the partial isometry
$$
W := \sum_{i=1}^m \begin{pmatrix} T_i^* \otimes S_i &   a_i \phi^{1/2} T_i \otimes  S_{m-i+1} \\
 \overline{a}_i T_{i}^* \otimes T_{m-i+1}^*\phi^{1/2} & a_i \overline{a}_{m-i+1}\phi^{1/2} T_{i} \otimes S_{i}^* \phi^{1/2}
 \end{pmatrix}\in \TT_{P^\dagger}\otimes\TT_P
$$
is a lifting of the element $w$ defined by~\eqref{eq:w}. Hence $w\in \OO_{P^\dagger}\otimes\OO_P$ is indeed unitary, which proves Lemma~\ref{lem:unitary}.

We note also that similar computations give a formula for $\tilde W$ for every Temperley--Lieb polynomial $P$. Since the expression is not particularly beautiful and not needed, we omit it. In fact, we will never use any explicit expression for $\tilde W$ for any $P$, the only property of $\tilde W$ in addition to~\eqref{eq:iso-coiso} that we need is that it defines an element of $\TT_{P^\dagger}\otimes\TT_P$. 

\smallskip

Consider now the unitary representation
$$
U_P:=\bigoplus_{n\ge0}U_n
$$
of $\GA$ on $\F_P$. It defines a right action $\TT_P\to\TT_P\otimes C(\GA)$ of $\GA$ on $\TT_P$ by $S\mapsto U_P(S\otimes 1)U_P^*$, which at the level of generators is given by $S_j\mapsto\sum_i S_i\otimes v_{ij}$. This action passes to an action $\delta_P\colon \OO_P\to\OO_P\otimes C(\GA)$ of $\GA$ on $\OO_P$.

Consider also the unitary representation
$$
U'_P:=(U_P\otimes U_P)\oplus (U_P\otimes d\otimes U_P)= (U_{P})_{13}({U_P})_{23}\oplus (U_{P})_{13}(1\otimes1\otimes d)({U_P})_{23}
$$
of $\GA$ on $(\F_P\otimes\F_P)^2$. This representation defines a right action of $\GA$ on $\K(\F_P^2)\otimes\TT_P$ by $S\mapsto U'_P(S\otimes 1){U'_P}^*$, which then passes to $\K(\F_P^2)\otimes\OO_P$. By construction the partial isometry~$\tilde W$ is a self-intertwiner of~$U'_P$.  It follows that the image of $\tilde W$ in $M(\K(\F_P^2)\otimes\OO_P)$ is invariant under the $\GA$-action. 

We thus see that the homomorphism~\eqref{eq:ext} is $\GA$-equivariant with respect to the actions 
$\iota\otimes\delta_P\colon C_0(\R)\otimes\OO_P\to C_0(\R)\otimes\OO_P\otimes C(\GA)$ and
$$
\K(\F_P^2)\otimes\OO_P\to M(\K(\F_P^2)\otimes\OO_P\otimes C(\GA)),\quad a\otimes s\mapsto ({U''_P})_{13}(a\otimes\delta_P(s))({U''_P})^*_{13},
$$
where ${U''_P}:=U_P\oplus (U_P\otimes d)$. It therefore defines a class in $KK_1^{\GA}(C_0(\R)\otimes\OO_P,\OO_P)$, see, e.g.,~\cite{NV} for a discussion of quantum group equivariant $KK$-theory. We are now ready to formulate the main technical result of the paper.

\begin{theorem}\label{thm:main-equi}
For every Temperley--Lieb polynomial $P$, the lift of the class $\beta^{-1}\otimes\delta\otimes_{\OO_{P^\dagger}}\Delta\in KK_1(C_0(\R)\otimes\OO_P,\OO_P)$ to an element of $KK_1^{\GA}(C_0(\R)\otimes\OO_P,\OO_P)$, as described above, equals $\beta^{-1}\otimes1$.
\end{theorem}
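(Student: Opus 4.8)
The plan is to prove the theorem by reducing, via the monoidal equivalence $\GA\sim_{\mathrm{mon}}U_q(2)$ of Mrozinski, to the single ``model'' polynomial $P_q=q^{-1/2}X_1X_2-q^{1/2}X_2X_1$, and then to establish the statement for $P_q$ directly by an equivariant Mayer--Vietoris / extension argument. Let me describe this in more detail.

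\medskip

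\textbf{Step 1: Reduction to the model polynomial.} By \cite{Mro14}, for every Temperley--Lieb polynomial $P$ there is a unique $0<q\le1$ with $\Tr(A^*A)=q+q^{-1}$, and $\GA$ is monoidally equivalent to $U_q(2)$; the latter is the quantum symmetry group of $P_q$. A monoidal equivalence induces an equivalence between the categories of ($\GA$- and $U_q(2)$-) unitary representations that matches $V\leftrightarrow V_q$ and $d\leftrightarrow d_q$, hence matches the fibres $H_n\leftrightarrow (H_q)_n$ together with the intertwiners $i_R,i_L,V_R,V_L,v$ defining $\tilde W$. Since all the objects in sight --- $\TT_P$, $\OO_P$, the actions $\delta_P$, the partial isometry $\tilde W$, the representations $U'_P,U''_P$, and the homomorphism \eqref{eq:ext} --- are built functorially out of this representation-category data (the Toeplitz algebra of a subproduct system is the algebra generated by the creation operators, which are morphisms in the representation category), the whole equivariant $KK_1$-picture for $P$ is carried over from the one for $P_q$ by the functor on equivariant $KK$-theory induced by a monoidal equivalence (as in the Neshveyev--Tuset machinery, cf.\ \cite{NT}). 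In particular the validity of the conclusion ``the lift equals $\beta^{-1}\otimes1$'' for $P_q$ implies it for $P$. The one point requiring care here is that $\beta^{-1}\otimes1_{KK(\OO_P,\OO_P)}$ itself is in the image of this transport --- but this is clear since $\beta^{-1}\otimes1$ is represented by the trivial extension \eqref{eq:what-we-want}, which is manifestly functorial (it only involves $\ell^2(\Z_+)$, the scalar shift $u$, and $\delta_P$).

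\medskip

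\textbf{Step 2: The model case $P=P_q$.} Here one computes directly with the extension \eqref{eq:ext}. As indicated in Section~\ref{sec:compare}, the unitary $(\tau_{P^\dagger}\otimes\iota)(w)$ lifts to the partial isometry $W\in\TT_{P^\dagger}\otimes\TT_P$ with source projection $1-\begin{pmatrix}e_0\otimes1&0\\0&0\end{pmatrix}$ and full range projection (this is \eqref{eq:iso-coiso}). Using $W$ one builds an isometry $v\colon\ell^2(\Z_+)\otimes\OO_P\to\F_P^2\otimes\OO_P$ intertwining $u\otimes1$ with $W^*$; the cokernel of $v$ is the rank-one-valued projection onto (the $\OO_P$-module generated by) $H_0$ in the first summand. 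The plan is to show that, after composing \eqref{eq:ext} with a suitable equivariant unitary and an equivariant homotopy of the $\OO_P$-part (an analogue of the automorphism $\theta$ of Section~\ref{sec:compare}, which exists and is $\GA$-equivariant because $\delta_P$ commutes with the gauge action), the extension \eqref{eq:ext} becomes equivalent to one supported on $\mathrm{im}(v)$, hence to \eqref{eq:what-we-want}. The key extra input that makes this work for $P_q$ --- and is the whole point of passing to equivariant $KK$ --- is \emph{Frobenius reciprocity}: the defect module $\F_P^2\ominus\mathrm{im}(v)$, being an $\OO_P$-module induced from the trivial $\GA$-representation sitting in degree $0$, contributes a degenerate (compact-valued) piece to the Busby invariant once we pass to the Calkin algebra, so it may be discarded. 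Equivalently, the difference of the two extension classes lies in the image of $KK_1^{\GA}(\cdot,\cdot)$-classes coming from finite-dimensional (hence ``small'') corner data, and the fusion rules \eqref{eq:fusion} let one cancel it telescopically against the shift.

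\medskip

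\textbf{Main obstacle.} The hard part will be Step~2: producing the equivariant homotopy that replaces the $1\otimes s_i$ part of \eqref{eq:ext} by something the isometry $v$ intertwines with $1\otimes s_i$ on $\ell^2(\Z_+)\otimes\OO_P$. In the Cuntz case this was the elementary trick $1\otimes s_i\mapsto\sum_j\tau_m^\dagger(s_j^*)\tau_m(s_i)\otimes s_j$, but as the authors note in Section~\ref{sec:compare}, the naive analogue fails because the isometry built from $W$ is very far from unitary (its cokernel, though ``small'', is nonzero, unlike in the Cuntz case where $W_m$ was a genuine coisometry onto the whole module). So the argument cannot be a one-line perturbation; it will instead require genuinely using the $\GA$-equivariance to organize a Mayer--Vietoris-type cancellation of the defect, presumably by recognizing the correction term as an equivariant boundary and invoking the $KK$-equivalence $\C\hookrightarrow\TT_P$ together with $[e_0]=(2-m)[1]$ --- for $P_q$, $m=2$, so $[e_0]=[1]$ and $[e_0]$ is in particular a trivial class, which is presumably what kills the defect. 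I would expect the cleanest route to be: represent both extensions by $\GA$-equivariant Fredholm-type data, show their difference is $\GA$-equivariantly compact-perturbation-equivalent, and conclude.
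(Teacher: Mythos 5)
Step 1 (reduction to the model polynomial $P_q=q^{-1/2}X_1X_2-q^{1/2}X_2X_1$ via Mrozinski's monoidal equivalence $\GA\sim_{\mathrm{mon}}U_q(2)$) matches the paper's reduction, although the precise tool is Voigt's theorem that a monoidal equivalence of representation categories induces an equivalence of the associated equivariant $KK$-categories (together with~\cite[Corollary~2.8]{HaNe21} to match $\OO_P$ with $\OO_Q$), rather than anything in~\cite{NT}.

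Step 2, however, has a genuine gap, and moreover pursues precisely the route the authors flag in Section~\ref{sec:compare} as the one they could \emph{not} make work: trying to intertwine the two extensions through the isometry built from the lift of $w$, perhaps after an analogue of the automorphism $\theta$. Your sketch acknowledges the obstacle but leaves it unresolved --- there is no actual construction of the equivariant homotopy, and the proposed ``Mayer--Vietoris cancellation of the defect'' is not substantiated. In particular the remark that for $m=2$ ``$[e_0]=[1]$ and $[e_0]$ is in particular a trivial class'' is incorrect ($[1]$ generates $K_0(\TT_P)\cong\Z$, it is not zero), and in any case the defect contributes a nonzero equivariant index $-1\in R(\T)$ in the paper's computation, not a trivial class.

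The paper's actual argument for the model case is structurally different. It uses the $U_q(2)$-equivariant identification $\OO_{P_q}\cong C(\T\backslash U_q(2))=C(SU_q(2))$ and then applies Frobenius reciprocity in the form $KK^{U_q(2)}(B,C(SU_q(2)))\cong KK^\T(B,\C)$ (Nest--Voigt), turning a quantum group equivariant problem into a classical torus equivariant one. The heart of the argument is then Lemma~\ref{lem:Tequiv_Khomology}, the $R(\T)$-module isomorphism $K^0_\T(C(SU_q(2)))\cong R(\T)$ induced by the counit $\eps_q$ and its inverse $\pi_q$, proved via the composition series of $C(SU_q(2))$ (short exact sequence~\eqref{eq:SU2-series}) and the Green--Julg theorem. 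After pushing forward along $\eps_q$ and $\pi_q$, the desired equality in $KK_1^{U_q(2)}$ reduces to the identity $[(\iota\otimes\eps_q)(W_P)^*]=\beta^{-1}$ in $KK_1^\T(C_0(\R),\C)\cong R(\T)$, which is a one-line $\T$-equivariant index computation on the Fock space: $(\iota\otimes\eps_q)(W_P)$ is a coisometry with kernel the trivial $\T$-representation on $H_0\oplus0$, giving index $-1$. None of these ingredients --- the coset identification, the $KK^{U_q(2)}\to KK^\T$ reduction, Lemma~\ref{lem:Tequiv_Khomology}, or the index computation --- appears in your proposal, so Step~2 needs to be essentially replaced.
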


Before turning to the proof, let us briefly discuss the class $\beta^{-1}\otimes\delta\otimes_{\OO_P}\Delta \in KK_1(C_0(\R)\otimes\OO_{P^\dagger},\OO_{P^\dagger})$. Similarly to the discussion above, it lifts to a class in $KK_1^{\GA}(C_0(\R)\otimes\OO_{P^\dagger},\OO_{P^\dagger})$, the main difference being that now all actions of $\GA$ are on the left, implemented by exactly the same unitaries $U_P$ and $U_P'$ as before. For example, the left action of $\GA$ on $\TT_{P^\dagger}$ is defined by $T\mapsto (U_P)^*_{21}(1\otimes T)(U_P)_{21}$, where we view $\TT_{P^\dagger}$ as a C$^*$-subalgebra of $B(\F_P)$ represented by the right creation operators $R_i$. As should become clear later, the same arguments as in the proof of Theorem~\ref{thm:main-equi} show that the lifted class equals $-\beta^{-1}\otimes1\in KK_1^{\GA}(C_0(\R)\otimes\OO_{P^\dagger},\OO_{P^\dagger})$, where the minus sign appears because the image of $W$ in $\OO_{P^\dagger}\otimes\TT_P$ is an isometry, while its image in $\TT_{P^\dagger}\otimes\OO_P$ is a coisometry. Altogether, this implies that the elements $\delta$ and $\Delta$ indeed implement a $KK$-duality between $\OO_P$ and $\OO_{P^\dagger}$.

\subsection{Reduction to \texorpdfstring{$m=2$}{m=2}}

For a fixed Temperley--Lieb polynomial $P=\sum^m_{i,j=1}a_{ij}X_iX_j$, with $A\bar A$ unitary, consider the noncommutative polynomial $Q:=q^{-1/2}X_1X_2-q^{1/2}X_2X_1$, where $0<q\le1$ is such that $q+q^{-1}=\Tr(A^*A)$. The corresponding quantum group $\tilde O^+_Q$ is $U_q(2)$. By the main result of~\cite{Mro14}, see also~\cite{HaNe22}, the quantum groups $\GA$ and $U_q(2)$ are monoidally equivalent, that is, they have monoidally equivalent categories of unitary representations. We will not need an explicit description of such an equivalence, it is important only to realize that it can be chosen to transform the intertwiner $v\colon \C\to V\otimes V$, $1\mapsto [2]_q^{-1/2}P$, in $\Rep \GA$ into the similarly defined intertwiner in $\Rep U_q(2)$. This already implies that it maps the subproduct system $\HH_P$ into a subproduct system canonically isomorphic to $\HH_Q$, and under this isomorphism the partial isometry~\eqref{eq:tildeW} for $P$ transforms into the similarly defined partial isometry for $Q$.

Furthermore, the monoidal equivalence of the representation categories defines an equivalence of categories of $\GA$- and $U_q(2)$-C$^*$-algebras, and even more, an equivalence of the corresponding equivariant $KK$-categories $KK^{\GA}$ and $KK^{U_q(2)}$, see~\cite[Theorem~8.5]{Voi}. By \cite[Corollary~2.8]{HaNe21}, this equivalence maps $\OO_P$ into an object canonically isomorphic to $\OO_Q$. 

It follows that under the equivalence of the equivariant $KK$-categories the lift of the class $\beta^{-1}\otimes\delta\otimes_{\OO_{P^\dagger}}\Delta\in KK_1(C_0(\R)\otimes\OO_P,\OO_P)$ to an element of $KK_1^{\GA}(C_0(\R)\otimes\OO_P,\OO_P)$ described before Theorem~\ref{thm:main-equi} corresponds to the similarly defined element of $KK_1^{U_q(2)}(C_0(\R)\otimes\OO_Q,\OO_Q)$. Therefore it suffices to prove Theorem~\ref{thm:main-equi} for the polynomials $Q=q^{-1/2}X_1X_2-q^{1/2}X_2X_1$.

\subsection{Computations for \texorpdfstring{$U_q(2)$}{Uq(2)}}

For the rest of this section we consider the polynomial $P:=q^{-1/2}X_1X_2-q^{1/2}X_2X_1$ for a fixed $0<q\le1$. Consider the elements
$$
\alpha:=v_{22}^*,\qquad \gamma:=v_{21}
$$
in $C(\GA)=C(U_q(2))$. Then
$$
V=\begin{pmatrix}
    d & 0 \\ 0 & 1
\end{pmatrix}\begin{pmatrix}
    \alpha & - q\gamma^*\\ \gamma & \alpha^*
\end{pmatrix},
$$
cf.~\cite[Appendix A]{HaNe22}. The elements $\alpha$ and $\gamma$ generate a copy of $C(SU_q(2))$ in $C(U_q(2))$, while the element $d$ is central, so as a C$^*$-algebra $C(U_q(2))$ decomposes as $C^*(d)\otimes C(SU_q(2))\cong C(\T)\otimes C(SU_q(2))$.

Consider the one-dimensional torus
$$
\T:=\begin{pmatrix} \mathbb{T} & 0 \\ 0&1
\end{pmatrix} \subset U_q(2).
$$
In other words, the subgroup $\T\subset U_q(2)$ is defined by the homomorphism $C(U_q(2))\to C(\T)$, $v_{12}\mapsto0$, $v_{21}\mapsto0$, $v_{11}\mapsto z$, $v_{22}\mapsto 1$, that is, by
$$
d\mapsto z,\quad \alpha\mapsto 1,\quad \gamma\mapsto 0.
$$
This subgroup acts on $C(U_q(2))$ by left translations and the fixed point algebra is
$$
C({\T}\backslash U_q(2))=C(SU_q(2)).
$$
From Theorem~\ref{thm: univ_relations} we then get a right $U_q(2)$-equivariant isomorphism
\begin{equation}\label{eq:Op-iso}
C({\T}\backslash U_q(2))\cong\OO_P,\quad \alpha\mapsto s_2^*,\quad \gamma\mapsto s_1,
\end{equation}
which is a particular case of~~\cite[Theorem~2.8]{HaNe22}.

On the other hand, we also have the action of $\T$ on $C(U_q(2))$ by right translations. Its restriction to $C(SU_q(2))=C({\T}\backslash U_q(2))$ is given by
\begin{equation}\label{eq:torus-action}
C(SU_q(2))\to C(SU_q(2))\otimes C(\T),\quad \alpha\mapsto\alpha\otimes1,\quad\gamma\mapsto\gamma\otimes z.
\end{equation}
We want to stress that this is \emph{not} the action by right translations of the standard maximal torus $T\subset SU_q(2)$, but the action of $T/\{\pm1\}$ by conjugation.

\smallskip

Consider now the counit $\eps_q$  for the quantum group $SU_q(2)$, so
$$
\eps_q\colon  C(SU_q(2))\to \mathbb{C},\quad \eps_q(\alpha)=1,\quad \eps_q(\gamma)=0.
$$ 
It is $\mathbb{T}$-equivariant, hence it gives a class $[ \eps_q ] \in KK^{\mathbb{T}}(C(SU_q(2)),\mathbb{C})$. We also consider the $*$-homomorphism
\[ \pi_q\colon  C_0(\mathbb{R}) \to C_0(\mathbb{R}) \otimes C(SU_q(2)), \quad x \mapsto x \otimes 1.\] 
Identify the ring of Laurent polynomials $\mathbb Z [t,t^{-1}]$ with the representation ring $R(\mathbb T)$. We use the maps $\eps_q$ and $\pi_q$ to prove the following auxiliary result.

\begin{lemma}
\label{lem:Tequiv_Khomology}
The maps
\begin{align*}
- \otimes [\eps_q ] &: KK_1^{\mathbb{T}} (C_0(\mathbb{R}),\mathbb{C}) \to KK_1^{\mathbb{T}} (C_0(\mathbb{R}) \otimes C(SU_q(2)),\mathbb{C}),\\
\left[\pi_{q} \right]\otimes_{C_0(\R)\otimes C(SU_q(2))} - &: KK_1^{\mathbb{T}} (C_0(\mathbb{R}) \otimes C(SU_q(2)),\mathbb{C}) \to KK_1^{\mathbb{T}} (C_0(\mathbb{R}),\mathbb{C})
\end{align*}
are inverse to each other, where $\T$ acts trivially on $C_0(\R)$ and it acts by~\eqref{eq:torus-action} on $C(SU_q(2))$. Therefore, we obtain an $R(\mathbb{T})$-module isomorphism of $K^0_{\mathbb{T}}(C(SU_q(2)))$ with~$R(\mathbb{T})$.
\end{lemma}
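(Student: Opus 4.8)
The plan is to prove that the two maps compose to the identity in both directions, essentially reducing everything to the structure of $C(SU_q(2))$ as a $\T$-C$^*$-algebra. First I would recall that $SU_q(2)$ carries the action \eqref{eq:torus-action} of $\T = T/\{\pm 1\}$, and under this action the fixed point algebra is the commutative subalgebra generated by $\alpha$, which is isomorphic to $C(\DD)$ (the closed disk), with $\gamma$ and $\gamma^*$ carrying the nontrivial weights $\pm 1$. The counit $\eps_q$ is the evaluation that collapses $C(SU_q(2))$ onto $\mathbb C$ along the point where $\alpha = 1$, $\gamma = 0$.

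\smallskip

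The key step is to identify $C(SU_q(2))$, as a $\T$-C$^*$-algebra, up to $\T$-equivariant $KK$-equivalence, with $\mathbb C$. One way I would do this is to mimic the well-known Toeplitz-type picture: $C(SU_q(2))$ sits in an extension
\[
\xymatrix{0 \ar[r] & \K\otimes C(\T) \ar[r] & C(SU_q(2)) \ar[r] & C(\T) \ar[r] & 0}
\]
where the quotient $C(\T)$ is the copy generated by the image of $\alpha$ (the ``boundary circle'') on which $\T$ acts trivially, and the ideal $\K\otimes C(\T)$ has $\T$ acting on the $C(\T)$ tensor factor by translation (coming from the weight-one element $\gamma$). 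Since $C(\T)$ with the translation action of $\T$ is $\T$-equivariantly $KK$-equivalent to $\mathbb C$ (Green--Julg / the fact that $C(\T)\rtimes\T \sim \K$), the ideal $\K\otimes C(\T)$ becomes $\T$-equivariantly $KK$-contractible, so the quotient map $C(SU_q(2))\to C(\T)$ is a $\T$-equivariant $KK$-equivalence. Composing with the $\T$-equivariant $KK$-equivalence $C(\T) \simeq \mathbb C$ (here $\T$ acts trivially, so this is just the unital inclusion, which is a $KK$-equivalence), one checks that the resulting class $C(SU_q(2)) \to \mathbb C$ is precisely $[\eps_q]$: both are represented by $*$-homomorphisms to $\mathbb C$, and any two $\T$-equivariant $*$-homomorphisms $C(SU_q(2))\to\mathbb C$ are $\T$-homotopic (the point $(\alpha,\gamma) = (1,0)$ can be connected to any other $\T$-fixed character, and $(1,0)$ is the only one anyway since $\T$ acts on $\gamma$ with a nontrivial weight). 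Hence $[\eps_q]$ is a $\T$-equivariant $KK$-equivalence, with inverse the unital inclusion $\mathbb C \to C(SU_q(2))$, which is exactly $[\pi_q]$ after tensoring with $C_0(\R)$.

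\smallskip

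From that it follows formally that $- \otimes [\eps_q]$ and $[\pi_q] \otimes -$ are mutually inverse on all the relevant $KK^{\T}$-groups, in particular on $KK_1^{\T}(C_0(\R),\mathbb C)$ and $KK_1^{\T}(C_0(\R)\otimes C(SU_q(2)),\mathbb C)$: indeed $[\pi_q]\otimes_{C_0(\R)\otimes C(SU_q(2))}(-\otimes[\eps_q]) = -\otimes([\pi_q]\otimes[\eps_q]) = -\otimes 1$ using associativity of the Kasparov product and $[\pi_q]\otimes[\eps_q] = 1_{KK^{\T}(C_0(\R),C_0(\R))}$, and similarly in the other order using $[\eps_q]\otimes[\pi_q]$... — here one must be slightly careful because $[\pi_q]$ lands in $C_0(\R)\otimes C(SU_q(2))$ while $[\eps_q]$ is a class over $C(SU_q(2))$; the clean statement is that the inclusion $\mathbb C \to C(SU_q(2))$ and $[\eps_q]$ are inverse $\T$-equivariant $KK$-equivalences, and then tensoring the whole picture over $\C$ with $C_0(\R)$ (with trivial $\T$-action) preserves this, giving the claim. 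Finally, $KK_1^{\T}(C_0(\R),\mathbb C) \cong KK_0^{\T}(\mathbb C,\mathbb C) = R(\T)$ by Bott periodicity, so the isomorphism $K^0_{\T}(C(SU_q(2))) \cong R(\T)$ of $R(\T)$-modules drops out, the module structure being respected because every map in sight is a $KK^{\T}$-morphism and hence $R(\T) = KK^{\T}(\mathbb C,\mathbb C)$-linear.

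\smallskip

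The main obstacle I anticipate is pinning down the $\T$-equivariant $KK$-equivalence $C(SU_q(2)) \simeq \mathbb C$ with the correct equivariance: one must make sure that the Toeplitz-type extension above is genuinely $\T$-equivariant with the action on the ideal being (a multiple of) the translation action, and that the connecting maps are handled $\T$-equivariantly — this is where the precise form \eqref{eq:torus-action}, and the fact that it is the $T/\{\pm 1\}$-conjugation action rather than the standard maximal torus action, matters. An alternative, perhaps cleaner, route that avoids building the extension by hand is to invoke that $[\eps_q]$ is already known to be a (non-equivariant) $KK$-equivalence $C(SU_q(2)) \sim \mathbb C$ and then upgrade this to the equivariant setting by a Baum--Connes / $\T$ is compact abelian argument: since $\T$ is compact, $KK^{\T}$ can be computed via the $R(\T)$-linear structure and it suffices to check that $[\eps_q]$ induces an isomorphism after tensoring with each character of $\T$, i.e. on the corresponding spectral subspaces — but I would still expect the extension argument to be the most transparent, and I would write it that way.
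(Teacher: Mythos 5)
Your argument breaks at the step where you declare the quotient map $C(SU_q(2))\to C(\T)$ to be a $\T$-equivariant $KK$-equivalence. You reason that $C(\T)$ with the translation action of $\T$ is $KK^\T$-equivalent to $\C$, and that the ideal $\K\otimes C(\T)$ is therefore $KK^\T$-contractible. Both steps are wrong. Even if the first claim held, it would make the ideal $KK^\T$-equivalent to $\C$, not to $0$; $\C$ is not $KK$-contractible, so the quotient map would not be an equivalence. More importantly, the first claim itself is false: by Green--Julg one has $K^\T_0(C(\T))\cong K_0(C(\T)\rtimes\T)\cong K_0(\K)\cong\Z$, whereas $K^\T_0(\C)\cong R(\T)\cong\Z[t,t^{-1}]$, and these $R(\T)$-modules are not isomorphic, so $C(\T)$ with the translation action is not $KK^\T$-equivalent to $\C$. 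In fact the conclusion you are aiming for --- that $[\eps_q]$ is a $\T$-equivariant $KK$-equivalence --- is itself false. The paper's own six-term exact sequence shows that $K^1_\T(C(SU_q(2)))$ sits in a short exact sequence $0\to R(\T)\to K^1_\T(C(SU_q(2)))\to\Z\to 0$, hence is nonzero, while $K^1_\T(\C)=0$.

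The paper uses the same composition series but does not collapse it. It computes all four terms for the ideal and the quotient (Green--Julg for the translation action, $\T$-equivariant duality for the circle with trivial action, and the genuine $KK^\T$-equivalence $C_0(\DD_q)\sim\C$), then observes that the connecting map $\Z\to R(\T)$ must vanish because $R(\T)$ admits no nonzero $R(\T)$-module morphism from $\Z$, and concludes only that $K^0_\T(C(SU_q(2)))\cong R(\T)$ --- claiming nothing about $K^1_\T$ or about a $KK^\T$-equivalence. This is then fed into a direct-summand argument: one checks directly that $[\pi_q]$ is a one-sided inverse to $-\otimes[\eps_q]$, so $R(\T)$ is a direct summand of $K^0_\T(C(SU_q(2)))\cong R(\T)$, and since $R(\T)$ is an integral domain the complementary summand must vanish, forcing $-\otimes[\eps_q]$ to be an isomorphism with two-sided inverse $[\pi_q]\otimes-$. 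You should reinstate the six-term computation rather than short-circuiting it through an alleged equivalence. Your closing alternative --- upgrading the non-equivariant $KK$-equivalence $C(SU_q(2))\sim\C$ to $KK^\T$ --- fails for the same reason: the $\T$-equivariant $K$-homology of $C(SU_q(2))$ genuinely differs from that of $\C$, so no such upgrade exists.
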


For $q=1$ the last statement of the lemma can be deduced from the six-term exact sequence in $\mathbb{T}$-equivariant $K$-homology for the compact pair $(S^3, S^1)$, see \cite{Se68,Ka88}. The proof below can be seen as an adaption of this to the noncommutative setting.

\begin{proof}[Proof of Lemma~\ref{lem:Tequiv_Khomology}]
It is straightforward to see that $\left[\pi_{q} \right]$ is a left inverse to $ [\eps_q ] $ for the Kasparov product over $C_0(\mathbb{R})\otimes C(SU_q(2))$. Therefore the $R(\T)$-module 
$$
K^0_{\mathbb{T}}(C(SU_q(2)))=KK^{\mathbb{T}}(C(SU_q(2)),\C)\cong KK_1^{\mathbb{T}} (C_0(\mathbb{R}) \otimes C(SU_q(2)),\mathbb{C})
$$ 
contains the representation ring $R(\mathbb{T})$ as a direct summand, since by Bott periodicity we have $KK_1^\T(C_0(\R),\C)\cong K_0^\T(\C)\cong R(\T)$. In order to prove the lemma it suffices to show that $K^0_{\mathbb{T}}(C(SU_q(2)))$ is isomorphic to $R(\T)$ as an $R(\T)$-module. Indeed, if $M\oplus R(\T)\cong R(\T)$ for an $R(\T)$-module $M$, then $M$ must be zero, since $R(\T)$ has no divisors of zero and therefore any nonzero $R(\T)$-module morphism $R(\T)\to R(\T)$ is injective.

To compute $K^0_\T(C(SU_q(2)))$ we will use the composition series of $C(SU_q(2))$ which was discussed already in~\cite[Appendix~2]{Wo87}. In order to deal simultaneously with the cases $q=1$ and $q<1$ it is convenient to introduce the unital C$^*$-algebra $C(\bar\DD_q)$ generated by an element $Z_q$ such that $\|Z_q\|\le1$ and $1-Z^*_qZ_q=q^2(1-Z_qZ^*_q)$. For $q=1$ this is the algebra of continuous functions on the closed unit disc $\bar\DD\subset\C$. Denote by $C_0(\DD_q)$ the kernel of the homomorphism $C(\bar\DD_q)\to C(\T)$, $Z_q\mapsto z$.

We have a short exact sequence
\begin{equation}\label{eq:SU2-series}
\xymatrix{0\ar[r] & C(\mathbb{T}) \otimes C_0(\DD_q) \ar[r]& C(SU_q(2)) \ar[r]& C(\mathbb{T}) \ar[r]& 0},
\end{equation}
see, e.g., \cite[Section~3]{NTpoisson}. Here the homomorphism $C(SU_q(2)) \to C(\mathbb{T})$ is given by $\alpha\mapsto z$ and $\gamma\mapsto0$, while $C(\mathbb{T})\otimes C_0(\DD_q)$ is considered as a subalgebra of $C(SU_q(2))$ using the embedding $C(SU_q(2))\hookrightarrow C(\T)\otimes C(\bar\DD_q)$ defined by $\alpha\mapsto z\otimes Z^*_q$, $\gamma\mapsto -z\otimes(1-Z_qZ_q^*)^{1/2}$.

The short exact sequence~\eqref{eq:SU2-series} is $\T$-equivariant with respect to the trivial action on the last term $C(\T)$ and the diagonal action on $C(\mathbb{T})\otimes C_0(\DD_q)$, with $\T$ acting on the first tensor factor~$C(\T)$ by right translations, that is,  $C(\T)\to C(\T)\otimes C(\T)$, $z\mapsto z\otimes z$, and acting on the second factor~$C_0(\DD_q)$ by the restriction of the action $C(\bar\DD_q)\to C(\bar\DD_q)\otimes C(\T)$, $Z_q\mapsto Z_q\otimes z$.

The $\T$-C$^*$-algebra $C_0(\DD_q)$ is $KK^\T$-equivalent to $\C$. For $q<1$ this is true simply because $C_0(\DD_q)\cong\K$, see again \cite[Section~3]{NTpoisson}, while for $q=1$ this follows from Kasparov's $KK$-theoretic version of Bott periodicity~\cite[Theorem~7]{Ka80}. We therefore get from~\eqref{eq:SU2-series} a six-term exact sequence 
\[ \xymatrix{ & K^0_{\mathbb{T}}(C(\mathbb{T}))  \ar[d]&  K^0_{\mathbb{T}}(C(SU_q(2)))  \ar[l] & K^0_{\mathbb{T}} (C(\mathbb{T})_{\mathrm{triv}}) \ar[l] \\
& K^1_{\mathbb{T}} (C(\mathbb{T})_{\mathrm{triv}}) \ar[r] &  K^1_{\mathbb{T}}(C(SU_q(2))) \ar[r] & K^1_{\mathbb{T}} (C(\mathbb{T})) \ar[u]}, \]
where we have used the subscript \enquote{$\mathrm{triv}$} to distinguish the case when the action on $C(\mathbb{T})$ is the trivial one from the case of the action by right translations.

We compute, using first $KK$-duality for the circle and then the Green--Julg theorem for the $K$-theory of crossed products \cite{Ju81}:
\begin{multline*}
K^i_{\mathbb{T}} (C(\mathbb{T})_{\mathrm{triv}}) =  KK_i^{\mathbb{T}} (C(\mathbb{T})_{\mathrm{triv}},\mathbb{C})) \cong KK_{i-1}^{\mathbb{T}} (\mathbb{C}, C(\mathbb{T})_{\mathrm{triv}}) \\ \cong K_{i-1} (C(\mathbb T) \otimes c_0(\Z)) \cong \bigoplus_{\mathbb{Z}} \mathbb{Z}  \cong R(\mathbb{T}).
\end{multline*}
It is not difficult to see that this is an isomorphism of $R(\T)$-modules.

Similarly, but this time for the nontrivial action of $\T$ on itself by translations and $\mathbb T$-equivariant $KK$-duality for the circle, we compute:
\begin{multline*} 
K^i_{\mathbb{T}} (C(\mathbb{T})) =  KK_i^{\mathbb{T}} (C(\mathbb{T}),\mathbb{C})) \cong KK_{i-1}^{\mathbb{T}} (\mathbb{C}, C(\mathbb{T}))\\ \cong K_{i-1} (C(\mathbb{T})\rtimes \mathbb{T}) \cong K_{i-1}(\mathcal{K}) \cong \begin{cases} \mathbb{Z}, & i=1, \\
0, & i=0. \end{cases}
\end{multline*}
Again, it is not difficult to see that the $R(\T)$-module structure on $\Z$ is given by the augmentation homomorphism $R(\T)=\Z[t,t^{-1}]\to\Z$, $t^{\pm1}\mapsto1$.

Hence, the above six-term exact sequence reduces to 
\[ \xymatrix{ & 0  \ar[d]&  K^0_{\mathbb{T}}(C(SU_q(2)))  \ar[l] & R (\mathbb{T}) \ar[l] \\
& R(\mathbb{T}) \ar[r] &  K^1_{\mathbb{T}}(C(SU_q(2))) \ar[r] & \mathbb{Z} \ar[u]} .\]
Since there are no nonzero $R(\mathbb T)$-module morphisms $\mathbb{Z} \to R(\mathbb T)$, we obtain the required isomorphism $K^0_{\mathbb{T}}(SU_q(2)) \cong R(\mathbb{T}).$ 
\end{proof}

Turning to the proof of Theorem~\ref{thm:main-equi} for $P=q^{-1/2}X_1X_2-q^{1/2}X_2X_1$, we identify $\OO_P$ with $C(SU_q(2))$ by~\eqref{eq:Op-iso} and denote by $W_P$ the image of~$\tilde W$ in $M(\K(\F_P^2)\otimes C(SU_q(2)))$. We need to show that the class of the extension in $KK_1^{U_q(2)}(C_0(\R)\otimes C(SU_q(2)),C(SU_q(2)))$ defined by the homomorphism
\begin{equation*}
\label{eq:hom_Calkin}
C_0(\mathbb{R})\otimes C(SU_q(2)) \to \mathcal{Q}((\F_P\oplus\F_P[1])\otimes C(SU_q(2))),
\end{equation*}
$$
z\otimes 1  \mapsto W_P^*, \quad 1 \otimes \alpha \mapsto \begin{pmatrix}
L_2^* \otimes 1 & 0\\
0 & L_2^*\otimes 1
\end{pmatrix}, \quad 1 \otimes \gamma \mapsto\begin{pmatrix}
L_1\otimes 1 & 0\\
0 & L_1\otimes 1
\end{pmatrix},
$$
equals $\beta^{-1}\otimes1$. We remind that the notation $\F_P[1]$ is used to indicate that we consider the representation $U_P\otimes d$ of $U_q(2)$ on the second copy of $\F_P$.

To simplify this class, we use that $C(SU_q(2))=C(\T\backslash U_q(2))$
is isomorphic to the induced algebra $\mathrm{Ind}_{\mathbb{T}}^{U_q(2)}(\mathbb{C})$ and apply Frobenius reciprocity
$KK^{U_q(2)}(B,C(SU_q(2)))\cong KK^\T(B,\C)$, see~\cite[Proposition~4.7]{NV}. Explicitly, this isomorphism is the composition of the forgetful homomorphism $KK^{U_q(2)}(B,C(SU_q(2)))\to KK^\T(B,C(SU_q(2)))$ with the map induced by the counit $\eps_q\colon C(SU_q(2))\to\C$. It follows that we need to show that the class of the extension in $KK_1^\T(C_0(\R)\otimes C(SU_q(2)),\C)$ defined by the homomorphism
$$
C_0(\mathbb{R})\otimes C(SU_q(2))  \to \mathcal{Q}(\F_P\oplus\F_P[1]), 
$$
$$
\qquad
z\otimes 1 \mapsto (\iota\otimes \eps_q)(W_P)^*,\quad
1 \otimes \alpha \mapsto \begin{pmatrix}
L_2^* & 0\\
0 & L_2^*
\end{pmatrix}, \quad 1 \otimes \gamma \mapsto\begin{pmatrix}
L_1 & 0\\
0 & L_1
\end{pmatrix},
$$
equals $\beta^{-1}\otimes[\eps_q]$.

By applying the map $[\pi_{q}]\otimes_{C_0(\R)\otimes C(SU_q(2))} - $ and using Lemma~\ref{lem:Tequiv_Khomology} we see that this is equivalent to showing that the class of the extension in $KK_1^\T(C_0(\R),\C)$ defined by the homomorphism 
\begin{equation}\label{eq:ext2}
C_0(\mathbb{R}) \to \mathcal{Q}(\F_P\oplus\F_P[1]), \quad z \mapsto (\iota\otimes \eps_q)(W_P)^*,
\end{equation}
equals $\beta^{-1}$, where we now view the inverse of the Bott class as an element of $KK_1^\T(C_0(\R),\C)$.

As was already used in the proof of Lemma~\ref{lem:Tequiv_Khomology}, we have $KK_1^\T(C_0(\R),\C)\cong R(\T)$, and explicitly the isomorphism is given by the equivariant index. Let us set $V_P:= (\iota\otimes \eps_q)(W_P)$. From~\eqref{eq:iso-coiso} we know that $V_PV^*_P =1 $ and $1-V^*_PV_P = \begin{pmatrix}
    e_0 & 0 \\ 0 & 0
\end{pmatrix}$, where $e_0$ is the projection onto~$H_0$. As the representation of  $\T=\begin{pmatrix} \mathbb{T} & 0 \\ 0&1
\end{pmatrix} \subset U_q(2)$ on $H_0\oplus0\subset\F_P\oplus\F_P[1]$ is trivial and $\dim H_0=1$, we see that $V^*_P$ has $\T$-equivariant index $-1\in R(\T)$, hence the class of~\eqref{eq:ext2} equals~$\beta^{-1}$ in $KK_1^\T(C_0(\R),\C)$. This completes the proof of Theorem~\ref{thm:main-equi}, hence also the proof of the $KK$-duality for all Temperley--Lieb polynomials.

\bigskip

\section{KMS-state}\label{sec:KMS}

The Cuntz--Pimsner algebra $\OO_P$ has a canonical KMS-state. In order to describe it we need a couple of results on invariant states. But first we need to recall a few notions.

Given a right action $\alpha\colon B\to B\otimes C(G)$ of a compact quantum group $G$ on a C$^*$-algebra $B$, we can define, for every $\phi\in C(G)^*$, an operator
$$
\phi* - \colon B\to B,\quad \phi*b:=(\iota\otimes\phi)\alpha(b).
$$
A similar definition makes of course also sense for left actions. We will need this construction for the characters $\phi=\rho^{it}$, where $\rho$ is the Woronowicz character of $G$. In general these characters are defined only on the universal completion $C_u(G)$ of $\C[G]$ rather than on $C(G)$ and, correspondingly, the automorphisms $\rho^{it}* -$ are a priori defined only on a universal form $B_u$ of the $G$-C$^*$-algebra $B$, see~\cite[Section~4]{DC}. However, for a class of actions they descend to $B$.

Namely, recall that $\alpha$ is called reduced, if $\ker\alpha=0$. We remind that $C(G)$ denotes the reduced function algebra of $G$, hence the actions of $G$ on $C(G)$ by left and right translations are reduced. For the reduced actions the kernel of the canonical map $B_u\to B$ equals
$
\{b\in B_u\mid (\iota\otimes h)\alpha_u(b^*b)=0\},
$
where $h\in C_u(G)^*$ is the Haar state and $\alpha_u\colon B_u\to B_u\otimes C_u(G)$ is the universal form of the action~$\alpha$. From this one can see that the kernel of $B_u\to B$ is invariant under $\rho^{it}* -$, so we still get well-defined automorphisms of $B$. 

Next, recall that given two compact quantum groups $G_1$ and $G_2$, by a C$^*$-$G_1$-$G_2$-Galois object one means a unital C$^*$-algebra $B$ equipped with two commuting actions $\alpha_1\colon B\to C(G_1)\otimes B$ and $\alpha_2\colon B\to B\otimes C(G_2)$ that are both ergodic and free. Ergodicity means that the fixed point algebras $^{G_1}\!\!B:=\{b\mid \alpha_1(b)=1\otimes b\}$ and $B^{G_2}:=\{b\mid\alpha_2(b)=b\otimes1\}$ are trivial. The precise meaning of freeness will not be important for us, see, e.g., \cite[Section~1]{HaNe22} for details.

\begin{lemma}\label{lem:modular}
Assume $G_1$ and $G_2$ are compact quantum groups and $B$ is a C$^*$-$G_1$-$G_2$-Galois object in reduced form. Let $\omega$ be the unique invariant state on $B$. Then $\omega$ is a $\sigma$-KMS$_{-1}$-state, where $\sigma$ is given~by
$$
\sigma_t(b)=\rho_2^{it}*b*\rho_1^{it}
$$
and $\rho_1$ and $\rho_2$ are the Woronowicz characters of $G_1$ and $G_2$.
\end{lemma}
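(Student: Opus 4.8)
The plan is to reduce the statement to the known modular theory of the Haar state on a compact quantum group, exploiting that $B$ sits inside a larger algebra on which the relevant translations act. Concretely, recall from the theory of Galois objects (see~\cite{DC,HaNe22}) that a $G_1$-$G_2$-Galois object $B$ can be realized as a subalgebra of, or otherwise compared with, $C(G)$ for a suitable \emph{bi-Galois partner}; more elementarily, the invariant state $\omega$ on $B$ is the analogue of the Haar state and its GNS representation carries a modular structure. First I would recall the explicit description of $\omega$: by ergodicity of $\alpha_2$ it is the composition $(\iota\otimes h_2)\alpha_2$, and equally $(h_1\otimes\iota)\alpha_1$ by ergodicity of $\alpha_1$; uniqueness of the invariant state is what makes these agree. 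This gives two a priori different expressions for $\omega$ that will be played against each other.

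Next I would compute the modular automorphism group of $\omega$. The key classical input is that for the Haar state $h$ on a compact quantum group $G$, the modular automorphism group is $\sigma^h_t=\rho^{it}*-*\rho^{it}$ (in the appropriate normalization, matching the KMS$_{-1}$ convention used here), where $\rho$ is the Woronowicz character; see~\cite{NT}. For a Galois object the situation is governed by \emph{two} Woronowicz characters, one for each of the two acting quantum groups. The strategy is: since $\omega=(h_1\otimes\iota)\alpha_1$, the modular structure coming from the $G_1$-side contributes the factor $-*\rho_1^{it}$ via the right-hand translation variable in $\alpha_1$; since $\omega=(\iota\otimes h_2)\alpha_2$, the $G_2$-side contributes $\rho_2^{it}*-$. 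Because the two actions $\alpha_1$ and $\alpha_2$ commute, these two one-parameter automorphism groups of $B$ commute, and their product $\sigma_t(b)=\rho_2^{it}*b*\rho_1^{it}$ is a well-defined one-parameter group of automorphisms of $B$ (using the remarks preceding the lemma, which guarantee that $\rho_j^{it}*-$ and $-*\rho_j^{it}$ descend from the universal form $B_u$ to the reduced form $B$ when the actions are reduced).

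The heart of the proof is then to verify the KMS$_{-1}$ condition for $(\omega,\sigma)$. I would do this by checking the twisted trace identity
\[
\omega(ab)=\omega(b\,\sigma_{-i}(a))
\]
on a dense subalgebra, namely the algebraic core $\mathcal B\subset B$ spanned by spectral subspaces for $\alpha_1$ and $\alpha_2$ (these are the images of matrix coefficients of irreducible corepresentations, on which $\rho_j^{it}*-$ acts by the explicit scalar/operator coming from the Woronowicz character, so $\sigma_{-i}$ is genuinely well-defined there). On such elements one reduces the identity, via $\omega=(h_1\otimes\iota)\alpha_1=(\iota\otimes h_2)\alpha_2$ and the commutation of the two actions, to the corresponding twisted-trace property of the Haar states $h_1$ and $h_2$ on $C(G_1)$ and $C(G_2)$, which is the standard fact that $h_j$ is a $\sigma^{h_j}$-KMS$_{-1}$ state. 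Analyticity in $t$ and the KMS boundary condition then follow on the algebraic core and extend by density and norm-continuity of $\sigma$.

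The main obstacle I expect is bookkeeping: keeping the two Woronowicz characters, the left action $\alpha_1$ and the right action $\alpha_2$, and the convolution operators $\phi*-$ straight, and in particular checking carefully that $\sigma_t$ really is an automorphism of the \emph{reduced} algebra $B$ rather than merely of $B_u$ — this is where the hypothesis ``in reduced form'' together with the paragraph on reduced actions is used in an essential way. A secondary technical point is making precise the claim that $\rho_j^{it}*-$ acts by a bounded operator that is the identity on the fixed-point algebra and scales spectral subspaces appropriately, so that $\sigma_{-i}(a)$ makes sense for $a$ in the algebraic core; once that is in place the computation reducing everything to the Haar state case is routine.
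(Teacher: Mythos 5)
The paper's own proof of this lemma is a single line: the statement is a direct consequence of the orthogonality relations for the unique invariant state on an ergodic action, recorded as \cite[Theorem~2.3.11]{NT}. Those relations express $\omega$ on products of matrix coefficients of spectral subspaces explicitly in terms of the two Woronowicz characters, and the KMS$_{-1}$ property for the stated $\sigma_t=\rho_2^{it}*{-}*\rho_1^{it}$ is read off from them. Your proposal instead tries to reconstruct the KMS condition by exploiting the two averaging formulas $\omega = (h_1\otimes\iota)\alpha_1 = (\iota\otimes h_2)\alpha_2$ and ``reducing'' to the known KMS property of $h_1$ and $h_2$. That is a genuinely different route in spirit, and the parts of your write-up about descending $\rho_j^{it}*{-}$ and ${-}*\rho_j^{it}$ from $B_u$ to $B$, their commutation, and the use of the algebraic core of spectral subspaces are all fine and match the preliminaries before the lemma.

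The central reduction step, however, is not carried out, and as sketched I do not see how to make it close. After applying $\alpha_2$ to $a,b\in B$, the elements $\alpha_2(a),\alpha_2(b)$ live in $B\otimes C(G_2)$, not in $C(G_2)$, so the twisted-trace identity $h_2(xy)=h_2(y\,\sigma^{h_2}_{-i}(x))$ cannot be applied directly. What you get from $\omega=(\iota\otimes h_2)\alpha_2$ is $\omega(ab)\cdot 1=(\iota\otimes h_2)\bigl(\alpha_2(a)\alpha_2(b)\bigr)$, which still contains products in $B$ on the first leg, so the problem has been re-expressed rather than reduced. To actually exchange $a$ and $b$ one needs to compare the $G_1$- and $G_2$-spectral decompositions of $B$ via the Galois condition, and that comparison is essentially the content of the orthogonality relations the paper cites; your ``reduction'' is therefore hiding the whole computation. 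As a minor point, the opening claim that a $G_1$-$G_2$-Galois object can be ``realized as a subalgebra of $C(G)$ for a suitable bi-Galois partner'' is not correct as stated: $B$ fits into a cogroupoid together with $C(G_1)$, $C(G_2)$ and the opposite Galois object, but it is not a subalgebra of the function algebra of a compact quantum group, and nothing in the argument uses such an embedding.
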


\bp
This follows from the orthogonality relations for $\omega$, see \cite[Theorem~2.3.11]{NT}.
\ep

\begin{lemma}\label{lem:state}
Assume $G$ is a compact quantum group, $\alpha\colon B\to B\otimes C(G)$ is a right action of $G$ on a unital C$^*$-algebra $B$, $U\in B(H)\otimes C(G)$ is an irreducible unitary representation of $G$ and $S\colon H\to B$ is a $G$-equivariant linear map, so that
$$
(S\otimes\iota)U(\xi\otimes 1)=\alpha(S(\xi))\quad\text{for all}\quad \xi\in H.
$$
Assume also that if $(\zeta_i)_i$ is an orthonormal basis in $H$, then $\sum_i S(\zeta_i)S(\zeta_i)^*=1$. Then, for any $G$-invariant state $\omega$ on $B$ and all $\xi,\zeta\in H$, we have
$$
\omega(S(\xi) S(\zeta)^*)=\frac{(\rho_U\xi,\zeta)}{\dim_q U},
$$
where $\rho_U:=(\iota\otimes\rho)(U)\in B(H)$ is the operator defined by the Woronowicz character of $G$ and $\dim_qU:=\Tr(\rho_U)$ is the quantum dimension of $U$.
\end{lemma}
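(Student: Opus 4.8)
The plan is to reduce the computation to the orthogonality relations for the invariant state, and the key point is that the map $S$ being $G$-equivariant means precisely that its matrix coefficients are coefficients of the representation $U$. First I would note that the formula to be proved is conjugate-linear in the second variable and linear in the first, so it suffices to verify it on basis vectors $\xi=\zeta_k$, $\zeta=\zeta_l$; what we must show is
\[
\omega(S(\zeta_k)S(\zeta_l)^*)=\frac{(\rho_U)_{lk}}{\dim_q U}.
\]
Writing the representation as $U=\sum_{i,j}e_{ij}\otimes u_{ij}$ with respect to the basis $(\zeta_i)_i$, the equivariance condition $(S\otimes\iota)U(\zeta_j\otimes1)=\alpha(S(\zeta_j))$ unwinds to $\alpha(S(\zeta_j))=\sum_i S(\zeta_i)\otimes u_{ij}$, so the elements $b_j:=S(\zeta_j)$ span a (finite-dimensional) $G$-invariant subspace of $B$ on which the action is given by the unitary matrix $(u_{ij})$.

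The second step is to apply the orthogonality relations for the unique invariant state $\omega$ on an action of a compact quantum group. Since $U$ is irreducible, the general orthogonality relations (as in~\cite[Theorem~2.3.11]{NT}, already invoked in the proof of Lemma~\ref{lem:modular}) give, for matrix coefficients of an equivariant embedding of the irreducible $U$,
\[
\omega(b_k\, b_l^*)=\frac{(\rho_U)_{lk}}{\dim_q U}\cdot\lambda
\]
for some constant $\lambda$ independent of $k,l$ that records the normalization of the embedding $S$. Concretely, one obtains $\omega(S(\zeta_k)S(\zeta_l)^*)=\lambda\,(\rho_U\zeta_k,\zeta_l)^{-}$-type expression up to the choice of conventions; the content of the orthogonality relations is exactly that the ``$S S^*$'' pairing is proportional to $\rho_U$, with the proportionality constant the same for all $k,l$.

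The third step—and the only place the extra hypothesis $\sum_i S(\zeta_i)S(\zeta_i)^*=1$ enters—is to pin down $\lambda$. Taking $\xi=\zeta=\zeta_i$ and summing over $i$ gives
\[
1=\omega(1)=\sum_i \omega(S(\zeta_i)S(\zeta_i)^*)=\lambda\sum_i\frac{(\rho_U\zeta_i,\zeta_i)}{\dim_q U}=\lambda\,\frac{\Tr(\rho_U)}{\dim_q U}=\lambda,
\]
using $\dim_q U=\Tr(\rho_U)$. Hence $\lambda=1$ and the claimed formula follows.

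I expect the main obstacle to be purely bookkeeping: matching the conventions in the orthogonality relations of~\cite[Theorem~2.3.11]{NT} (which are usually stated for the Haar state on $C(G)$ and the regular representation) to the present situation of a general ergodic-on-its-isotypical-component embedding $S$ into an arbitrary $G$-C$^*$-algebra $B$, and in particular making sure the character $\rho_U$ appears on the correct side and without an inadvertent transpose or complex conjugate. One clean way to finesse this is to observe that $\omega$ restricted to the subalgebra generated by the $S(\zeta_i)$ is the unique invariant state, to note that $B$ together with the spectral subspace of $U$ behaves exactly like the coefficient algebra of $U$ inside $C(G)$, and then to quote the orthogonality relations there; alternatively one can give a self-contained argument by computing $(\iota\otimes h)\big(\alpha(S(\xi)S(\zeta)^*)\big)$ in two ways, once using invariance of $\omega$ (which gives $\omega(S(\xi)S(\zeta)^*)\cdot 1$) and once using $\alpha(S(\xi)S(\zeta)^*)=\sum_{i,j}S(\zeta_i)S(\zeta_j)^*\otimes u_{i\bullet}u_{j\bullet}^*$ together with the known value of $(\iota\otimes h)$ on products $u_{ik}u_{jl}^*$ for an irreducible unitary representation. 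Either route reduces the lemma to standard Peter--Weyl-type identities for compact quantum groups, so no genuinely new input is needed beyond the orthogonality relations and the normalization hypothesis.
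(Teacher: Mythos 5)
Your proposal is correct and rests on the same underlying fact as the paper's proof, namely that for an irreducible representation $U$ there is a unique $G$-invariant state on the relevant data and it is given by $(\dim_q U)^{-1}\Tr(-\,\rho_U)$. The paper's packaging is slightly more economical: it observes that $\xi\otimes\bar\zeta\mapsto S(\xi)S(\zeta)^*$ defines a $G$-equivariant unital completely positive map $\theta\colon B(H)\to B$ (equivariant with respect to the inner action $T\mapsto U(T\otimes1)U^*$ on $B(H)$), so $\omega\circ\theta$ is automatically a $G$-invariant \emph{state} on $B(H)$; uniqueness of such a state for irreducible $U$ then gives the formula immediately, with no undetermined constant to chase. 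Your version reaches the same conclusion but separates the steps: first invoke the orthogonality relations to get proportionality to $\rho_U$ with an unknown $\lambda$, then use $\sum_i S(\zeta_i)S(\zeta_i)^*=1$ to show $\lambda=1$. That second step is exactly the statement $\theta(1)=1$, so the two arguments are the same in substance; the paper's formulation simply makes the normalization structural rather than computational.
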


\bp
Consider the right action $T\mapsto U(T\otimes1)U^*$ of $G$ on $B(H)$. We then have a $G$-equivariant ucp map $\theta\colon B(H)\to B$, $\xi\otimes\bar\zeta\mapsto S(\xi) S(\zeta)^*$, where we identified $B(H)$ with $H\otimes\bar H$. Hence $\omega\circ \theta$ is a $G$-invariant state on $B(H)$. Since $U$ is irreducible, such an invariant state is unique. It is well-known that this state is $(\dim_q U)^{-1}\Tr(- \rho_U)$, see, e.g., the proof of \cite[Lemma~1.4.8]{NT}. This gives the result.
\ep

We are now ready to describe the canonical KMS-state on $\OO_P$.

\begin{prop}
\label{prop:KMS}
Consider a Temperley--Lieb polynomial $P=\sum^m_{i=1}a_iX_iX_{m-i+1}$, $|a_ia_{m-i+1}|=1$. Then there is a unique $\GA$-invariant state $\omega$ on $\OO_P$. Explicitly, it is given by
\begin{align}
\omega(s_{j_n}^*\dots s_{j_1}^*s_{i_1}\dots s_{i_m})&=\delta_{m,n}\frac{q^{-n}}{[n+1]_q}(f_n(\xi_{i_1}\otimes\dots\otimes \xi_{i_n}),\xi_{j_1}\otimes\dots\otimes \xi_{j_n}),\label{eq:KMS1}\\
\omega(s_{i_1}\dots s_{i_m}s_{j_n}^*\dots s_{j_1}^*)
&=\delta_{m,n}\frac{|a_{j_1}\dots a_{j_n}|^2}{[n+1]_q}(f_n(\xi_{i_1}\otimes\dots\otimes \xi_{i_n}),\xi_{j_1}\otimes\dots\otimes \xi_{j_n}).\label{eq:KMS2}
\end{align}
Furthermore, this state is KMS$_{-1}$ with respect to the dynamics $\sigma$ given by
$$
\sigma_t(s_j)=q^{it}|a_j|^{2it}s_j.
$$
\end{prop}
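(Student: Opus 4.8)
\emph{The plan.} I would prove the three assertions --- uniqueness of $\omega$, the explicit formulas \eqref{eq:KMS1}--\eqref{eq:KMS2}, and the KMS$_{-1}$ property --- in that order, combining the description of $\OO_P$ as a quantum homogeneous space with Lemmas~\ref{lem:state} and~\ref{lem:modular}. For \emph{uniqueness and existence}: the right action $\delta_P$ of $\GA$ on $\OO_P$ is ergodic, since the analogue of the isomorphism~\eqref{eq:Op-iso} (a special case of \cite[Theorem~2.8]{HaNe22}) identifies $\OO_P$ $\GA$-equivariantly with the quotient space $C(\T\backslash\GA)$ carrying the right-translation action, whose fixed-point algebra is $\C$; an ergodic action of a compact quantum group admits a unique invariant state.

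\emph{The explicit formula \eqref{eq:KMS2}.} In $\OO_P$ the element $\phi$ of Theorem~\ref{thm: univ_relations} becomes the scalar $q$, so the relation $s_i^*s_j=\delta_{ij}-q\sum_{k,l}a_{ik}\bar a_{jl}s_ks_l^*$ allows one to normal-order any word and shows that $\OO_P$ is the closed linear span of the monomials $s_\alpha s_\beta^*$. The gauge circle action $s_j\mapsto zs_j$ factors through $\GA$ via the character $d\mapsto z^2$, $v_{ij}\mapsto z\delta_{ij}$; hence $\omega$ is gauge invariant and $\omega(s_\alpha s_\beta^*)=0$ unless $|\alpha|=|\beta|$. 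For $|\alpha|=|\beta|=n$, the relation $\sum_{i,j}a_{ij}s_is_j=0$ shows that $\xi_{i_1}\otimes\dots\otimes\xi_{i_n}\mapsto s_{i_1}\dots s_{i_n}$ annihilates $\langle P\rangle^{(n)}=H^{\otimes n}\ominus H_n$ and so descends to a $\GA$-equivariant map $S_n\colon H_n\to\OO_P$ satisfying $\sum_\zeta S_n(\zeta)S_n(\zeta)^*=1$ (sum over an orthonormal basis of $H_n$, using $\sum_i s_is_i^*=1$). Lemma~\ref{lem:state} then gives $\omega(S_n(\xi)S_n(\eta)^*)=(\rho_{U_n}\xi,\eta)/\dim_q U_n$. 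Here $\dim_q U_n=[n+1]_q$ by the fusion rules \eqref{eq:fusion}, and since the Woronowicz character is multiplicative and commutes with intertwiners (in particular with $f_n$), one has $\rho_{U_n}=\rho_V^{\otimes n}|_{H_n}$, where a direct computation from Definition~\ref{def:GA} for $P$ in standard form gives $\rho_V=\operatorname{diag}(|a_1|^2,\dots,|a_m|^2)$. The decisive combinatorial point is that, because $|a_ia_{m-i+1}|=1$, all monomials occurring in a generator $\xi_\gamma\otimes P\otimes\xi_\delta$ of $\langle P\rangle^{(n)}$ share the same $\rho_V^{\otimes n}$-eigenvalue, so $f_n$ respects the eigenspace decomposition of $\rho_V^{\otimes n}$; consequently $(\rho_V^{\otimes n}f_n\xi_i,\xi_j)$ vanishes unless $|a_{i_1}\cdots a_{i_n}|^2=|a_{j_1}\cdots a_{j_n}|^2$, in which case it equals $|a_{j_1}\cdots a_{j_n}|^2(f_n\xi_i,\xi_j)$, giving~\eqref{eq:KMS2}.

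\emph{The KMS property and \eqref{eq:KMS1}.} Since $\omega$ is the unique invariant state, Lemma~\ref{lem:modular} --- applied to the Galois-object structure on $\OO_P$, in the simplest incarnation after transporting along Mrozinski's monoidal equivalence $\GA\sim U_q(2)$, which sends $\OO_P$ to an algebra canonically isomorphic to $\OO_Q\cong C(SU_q(2))$ and sends $\omega$ to the Haar state --- shows that $\omega$ is KMS$_{-1}$ for a dynamics of the form $\sigma_t=\rho_1^{it}*-*\rho_2^{it}$. Tracing this through and using $\rho_V=\operatorname{diag}(|a_j|^2)$ identifies the dynamics as $\sigma_t(s_j)=q^{it}|a_j|^{2it}s_j$; equivalently, on the $\rho_V^{\otimes n}$-eigenspace $E_\lambda$ one has $\sigma_{-i}(S_n(\cdot))=q^n\lambda\,S_n(\cdot)$. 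Formula~\eqref{eq:KMS1} then follows from \eqref{eq:KMS2} and the KMS$_{-1}$ identity: writing $\xi=f_n\xi_i\in E_\lambda$ and $\eta=f_n\xi_j\in E_\mu$, we get $\omega(S_n(\eta)^*S_n(\xi))=q^{-n}\mu^{-1}\omega(S_n(\xi)S_n(\eta)^*)=q^{-n}\mu^{-1}\lambda(\xi,\eta)/[n+1]_q$, which is $0$ unless $\lambda=\mu$ and otherwise equals $q^{-n}(\xi,\eta)/[n+1]_q$.

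\emph{Main obstacle.} The routine part is \eqref{eq:KMS2} once $\rho_V$ and $\dim_q U_n$ are in hand; the real work is (i) computing the Woronowicz character $\rho_V$ of $\GA$ and making the eigenspace argument for $f_n$ precise, and (ii) the KMS step --- either identifying the relevant Galois-object structure so that Lemma~\ref{lem:modular} applies and verifying that the monoidal equivalence transports the invariant state and its modular data as claimed, or, if one prefers to argue directly, checking the KMS$_{-1}$ identity $\omega(xy)=\omega(y\sigma_{-i}(x))$ on the $\sigma$-invariant dense $*$-algebra of monomials $s_\alpha s_\beta^*$, which requires controlling the normal ordering of $s_\beta^*s_\gamma$ via the relations of Theorem~\ref{thm: univ_relations} together with \eqref{eq:KMS2}.
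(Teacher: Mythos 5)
Your overall route (ergodicity from a Galois--object structure, Lemma~\ref{lem:state} for~\eqref{eq:KMS2}, Lemma~\ref{lem:modular} for the KMS property, and then~\eqref{eq:KMS1} via the KMS identity) coincides with the paper's, and your derivation of~\eqref{eq:KMS1} from~\eqref{eq:KMS2} is fine. But there are two real issues.

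First, the claim that $\OO_P$ is $\GA$-equivariantly isomorphic to $C(\T\backslash\GA)$ is wrong in general: this only holds in the two-variable case $P=Q$, $\GA=U_q(2)$, where \eqref{eq:Op-iso} applies. What \cite[Theorem~2.8]{HaNe22} actually gives is that $\OO_P$ is the $\T$-fixed-point subalgebra (for the left $U_q(2)$-action) of a C$^*$-$U_q(2)$-$\GA$-\emph{bi}-Galois object $B$, which is \emph{not} $C(\GA)$. The ergodicity of the right $\GA$-action on $\OO_P={}^\T B$ still follows, since $\OO_P\cap B^{\GA}=\C 1$, so the conclusion is salvageable; but the assertion as you state it is false and, more importantly, it misleads you on where Lemma~\ref{lem:modular} should be applied.

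Second, and more seriously, your KMS step is not actually an argument. You want to apply Lemma~\ref{lem:modular} ``to the Galois-object structure on $\OO_P$'', but $\OO_P$ itself is not a Galois object, and transporting along Mrozinski's monoidal equivalence does not fix this --- it replaces $\OO_P$ by $\OO_Q\cong C(SU_q(2))$, which is again only a $\T$-fixed-point algebra, not a Galois object; moreover you give no argument that the modular data (Woronowicz characters, invariant states, the KMS group) transports across a monoidal equivalence in the way you need. The paper's proof sidesteps all of this: it applies Lemma~\ref{lem:modular} directly to the Galois object $B$, producing a dynamics $\sigma_t=\rho_{\GA}^{it}*-*\rho_{U_q(2)}^{it}$ on $B$, and then \emph{computes it on the generators}: since $\OO_P$ is generated by $s_1,\dots,s_m$ subject to the unitarity of the $2\times m$ matrix $U=\begin{pmatrix}q^{1/2}\bar a_1 s_m^*&\cdots&q^{1/2}\bar a_m s_1^*\\ s_1&\cdots&s_m\end{pmatrix}$, and this $U$ intertwines the two actions, one reads off $\sigma_t(U)=\operatorname{diag}(q^{-it},q^{it})\,U\,\rho_V^{it}$, i.e.\ $\sigma_t(s_j)=q^{it}|a_j|^{2it}s_j$. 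In particular $\sigma$ visibly preserves $\OO_P$, which is needed to restrict the KMS statement from $B$ to $\OO_P$, and no reduction to $m=2$ or monoidal equivalence is used. The monoidal equivalence with $U_q(2)$ only enters the paper in the proof of Theorem~\ref{thm:main-equi}, not here. As written, your KMS step is a gap; the clean fix is to argue on the Galois object as above. The remaining small differences in your treatment of~\eqref{eq:KMS2} (explicitly noting $\sum_\zeta S_n(\zeta)S_n(\zeta)^*=1$, and your hands-on eigenspace argument for why $f_n$ commutes with $\rho_V^{\otimes n}$) are correct but a bit more laborious than the paper's, which simply invokes that the Woronowicz character commutes with all intertwiners and then normalizes by summing over an orthonormal basis.
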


As before, the number $0<q\le1$ here is given  by~\eqref{eq:q} and $(\xi_i)_i$ is our fixed orthonormal basis in $H_1=\C^m$.

\bp
Similarly to \cite[Example 1.4.2]{NT}, the Woronowicz character of $\GA$ in the representation~$V$ of~$\GA$ on $H=\C^m$ is given by
$$
\rho_V=\begin{pmatrix}
|a_1|^2 & & 0\\
 & \ddots & \\
 0 & & |a_m|^2
\end{pmatrix}.
$$
The Cuntz--Pimsner algebra $\OO_P$ is a universal unital C$^*$-algebra with generators $s_1,\dots, s_m$ such that
$$
U=\begin{pmatrix}
q^{1/2}\bar{a}_1s_{m}^* & q^{1/2}\bar{a}_2s_{m-1}^* & \cdots & q^{1/2}\bar{a}_m s_{1}^*\\
s_{1} & s_{2} & \cdots & s_{m}
\end{pmatrix}
$$
is unitary. By \cite[Theorem~2.8]{HaNe22}, it is an $\GA$-invariant subalgebra of a C$^*$-$U_q(2)$-$\GA$-Galois object, and this object is automatically reduced by coamenability of $U_q(2)$. In particular, the action of $\GA$ on $\OO_P$ is ergodic and therefore there is exactly one $\GA$-invariant state $\omega$. Using Lemma~\ref{lem:modular} we can then conclude that $\omega$ is a KMS$_{-1}$-state with respect to the dynamics $\sigma$ such that 
$$
\sigma_t(U)=\begin{pmatrix}
q^{-it} & 0\\
0 & q^{it}\end{pmatrix}U\rho_V^{it},
$$
that is, $\sigma_{it}(s_j)=q^{it}|a_j|^{2it}s_j$.

In order to get an explicit formula for $\omega$ we apply Lemma~\ref{lem:state}. Consider the $G$-equivariant map
$$
\tilde s\colon H^{\otimes n}\to\OO_P, \quad \tilde s(\xi_{i_1}\otimes\dots\otimes \xi_{i_n})=s_{i_1}\dots s_{i_n}.
$$
This map factors through $H_n$, that is, we have a map $s\colon H_n\to \OO_P$ such that $\tilde s=s(f_n-)$. If $(\zeta_i)_i$ is an orthonormal basis in $H_n$, then the element $\sum_i s(\zeta_i)s(\zeta_i)^*\in\OO_P$ is $\GA$-invariant, hence it is scalar. By Lemma~\ref{lem:state} it follows that
\begin{align*}
\omega(s_{i_1}\dots s_{i_n}s_{j_n}^*\dots s_{j_1}^*)
&=\lambda_n(\rho_V^{\otimes n}f_n(\xi_{i_1}\otimes\dots\otimes \xi_{i_n}),f_n(\xi_{j_1}\otimes\dots\otimes \xi_{j_n}))\\
&=\lambda_n|a_{j_1}\dots a_{j_n}|^2(f_n(\xi_{i_1}\otimes\dots\otimes \xi_{i_n}),\xi_{j_1}\otimes\dots\otimes \xi_{j_n})
\end{align*}
for some number $\lambda_n>0$ that depends only on $n$. Summing up over all $i_1=j_1,\dots,i_n=j_n$ we get
$$
1=\lambda_n\Tr(\rho_V^{\otimes n}f_n)=\lambda_n\dim_q U_n=\lambda_n[n+1]_q.
$$
This proves formula~\eqref{eq:KMS2} for $m=n$. That we get zero for $m\ne n$ follows from gauge-invariance of $\omega$, which in turn follows from $\GA$-invariance. Formula~\eqref{eq:KMS1} follows then from the KMS-condition.
\ep

\begin{remark}
If we view $\omega$ as a state on the Toeplitz algebra, the formula for $\omega$ can be written in the following way similar to the usual Cuntz--Krieger--Pimsner algebras. Consider the gauge-invariant subalgebra $\TT_P^\T$ of the Toeplitz algebra. Each space $H_n$ is invariant under $\TT_P^\T$ . Let $\pi_n$ be the representation of $\TT_P^\T$ on $H_n$. On $B(H_n)$ we have a unique right $\GA$-invariant state
$$
\psi_n:=\frac{\Tr(\rho_{U_n}-)}{[n+1]_q},\quad\text{with}\quad\rho_{U_n}=\rho_V^{\otimes n}|_{H_n}.
$$
Then
$$
\omega(S_{i_1}\dots S_{i_n}S_{j_n}^*\dots S_{j_1}^*)=\psi_k(\pi_k(S_{i_1}\dots S_{i_n}S_{j_n}^*\dots S_{j_1}^*))
$$
for all $k\ge n$. Therefore for any element $a$ of $\TT_P^\T$ that is a polynomial in the generators and their adjoints we have $\omega(a)=\psi_k(\pi_k(a))$ for all $k$ large enough.
\end{remark}

\bigskip

\section{A Fredholm module representative of the fundamental class}\label{sec:Fredholm}
In this section we adapt an idea from~\cite{GoMe} to construct of a Fredholm module representative of the fundamental class of the $KK$-duality. Consequently, we can describe the generators of the $K$-homology of the Cuntz--Pimsner algebras by Fredholm modules.

\smallskip

Consider a Temperley--Lieb polynomial $P=\sum^m_{i=1}a_iX_iX_{m-i+1},$ $|a_ia_{m-i+1}|=1$. Throughout this section we assume that $\sum_{i=1}^{m}|a_i|^2>2$, that is, the number $0<q\le1$ such that $\sum_{i=1}^{m}|a_i|^2=q+q^{-1}$ satisfies $q<1$. This excludes only the polynomials with $m=2$ and $|a_1|=|a_2|=1$.
We need this for the following result, which is a simple consequence of \cite[Lemma~4.2]{HaNe21}, see also \cite[Lemma~A.4]{VaVe07}.

\begin{lemma}
\label{lem:Fredholm}
There is a constant $C_1>0$, depending only on $q$, such that, for all $0\leq k<n$, one has $$\|(f_{k+1}\otimes 1_{n-k})(1\otimes f_n)-f_{n+1}\|\leq C_1q^k.$$
\end{lemma}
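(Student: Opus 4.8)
The plan is to reduce the statement, by elementary manipulations with an ordered family of projections, to the single ``geometric locality'' estimate
\[
\|(f_j\otimes 1_1)(1_1\otimes f_j)-f_{j+1}\|\le C q^{\,j}\qquad(j\ge 1),
\]
for a constant $C=C(q)$, which is the content of \cite[Lemma~4.2]{HaNe21} (compare \cite[Lemma~A.4]{VaVe07}). Here and below $1_r$ denotes the identity on $H_1^{\otimes r}$, and for a projection $p$ on $H_1^{\otimes\ell}$ the symbols $p\otimes 1_r$ and $1_r\otimes p$ denote its ampliations to $H_1^{\otimes(\ell+r)}$ acting on the first, resp.\ last, $\ell$ tensor legs. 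We may assume $q<1$: if $q=1$ the left-hand side is at most $\|f_{k+1}\otimes 1_{n-k}\|\,\|1\otimes f_n\|+\|f_{n+1}\|\le 2$, so any $C_1\ge2$ works, and for the same reason we may assume $k\ge1$.

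The key observation is that, for a standard subproduct system, $H_{a+b}\subseteq H_a\otimes H_b\subseteq H_a\otimes H_1^{\otimes b}$, so $f_{n+1}\le f_j\otimes 1_{n+1-j}=:p_j$ for every $k+1\le j\le n+1$, and the $p_j$ form a decreasing chain $p_{k+1}\ge p_{k+2}\ge\cdots\ge p_{n+1}=f_{n+1}$; likewise $f_{n+1}\le 1\otimes f_n=:q'$ since $H_{n+1}\subseteq H_1\otimes H_n$. Because $f_{n+1}q'=f_{n+1}$ we get $p_{n+1}q'=f_{n+1}$, so telescoping yields
\[
(f_{k+1}\otimes 1_{n-k})(1\otimes f_n)-f_{n+1}=p_{k+1}q'-p_{n+1}q'=\sum_{j=k+1}^{n}(p_j-p_{j+1})\,q',
\]
and it suffices to bound each $\|(p_j-p_{j+1})q'\|$. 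Now $p_j-p_{j+1}=(f_j\otimes 1_1-f_{j+1})\otimes 1_{n-j}$, while $f_n\le f_j\otimes 1_{n-j}$ gives $q'\le 1_1\otimes f_j\otimes 1_{n-j}=:r$, hence $q'=rq'$ and $\|(p_j-p_{j+1})q'\|\le\|(p_j-p_{j+1})r\|$. Both $p_j-p_{j+1}$ and $r$ are ampliations by $1_{n-j}$ of operators on $H_1^{\otimes(j+1)}$, so $\|(p_j-p_{j+1})r\|=\|(f_j\otimes 1_1-f_{j+1})(1_1\otimes f_j)\|$ computed on $H_1^{\otimes(j+1)}$; and since $H_{j+1}\subseteq H_1\otimes H_j$ we have $f_{j+1}(1_1\otimes f_j)=f_{j+1}$, so this equals $\|(f_j\otimes 1_1)(1_1\otimes f_j)-f_{j+1}\|\le Cq^{\,j}$. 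Summing the geometric series, $\sum_{j=k+1}^{n}Cq^{\,j}\le Cq^{\,k}/(1-q)$, and the lemma holds with $C_1:=\max\{2,\,C/(1-q)\}$.

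The only non-formal ingredient is the per-term estimate $\|(f_j\otimes 1_1)(1_1\otimes f_j)-f_{j+1}\|\le Cq^{\,j}$; everything else is bookkeeping with the order relation on projections and a geometric sum. I expect this per-term bound to be the main point: it cannot be obtained by soft projection algebra (the natural manipulations are circular, since the Jones--Wenzl correction terms have norm $\phi(j)^{-1}$, which does not decay), and it genuinely encodes that the Jones--Wenzl projections of the Temperley--Lieb category with loop parameter $q+q^{-1}$ become increasingly ``local'' at rate $q^{\,j}$ — precisely the estimate established in \cite[Lemma~4.2]{HaNe21}, itself modelled on \cite[Lemma~A.4]{VaVe07}.
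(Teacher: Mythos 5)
Your proof is correct and the telescoping argument is a clean way to make precise the paper's statement that the lemma is a ``simple consequence'' of [HaNe21, Lemma~4.2] (cf.\ [VaVe07, Lemma~A.4]); the chain of inequalities $\|(p_j-p_{j+1})q'\|\le\|(p_j-p_{j+1})r\|=\|(f_j\otimes 1_1)(1_1\otimes f_j)-f_{j+1}\|$ (using $q'\le r$ and the fact that both sides are ampliations by $1_{n-j}$), the telescoping identity, and the geometric sum all check out, with the only non-formal input being the per-term locality bound $\|(f_j\otimes 1_1)(1_1\otimes f_j)-f_{j+1}\|\le Cq^j$, which is indeed what that reference supplies. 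Since the paper leaves the deduction implicit, your reduction fills in exactly the right amount of bookkeeping and matches the intended route.
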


The construction of the Hilbert space underlying the Fredholm module representative relies on the KMS-state $\omega$ on $\mathcal{O}_P$, whose existence is ensured by Proposition~\ref{prop:KMS}. We denote its counterpart for the dual algebra $\mathcal{O}_{P^\dagger}$ by $\omega^{\dagger}$. We remark that $\OO_{P^\dagger}$ carries a left action of $\GA$ and a right action $\tilde O^+_{P^\dagger}$, but it is not difficult to check that the invariant states for these actions are the same, so there is no ambiguity in the definition of $\omega^\dagger$.

We write $L^2(\mathcal{O}_{P}, \omega)$ and $L^2(\mathcal{O}_{P^\dagger},\omega^{\dagger})$ for corresponding GNS-spaces. Consider the GNS-maps \[\Lambda\colon \mathcal{O}_P\to L^2(\mathcal{O}_P, \omega),\qquad \Lambda^{\dagger}\colon \mathcal{O}_{P^{\dagger}}\to L^2(\mathcal{O}_{P^\dagger}, \omega^{\dagger}),\] and the corresponding GNS-representations
\[\rho\colon \mathcal{O}_P\to B(L^2(\mathcal{O}_P,\omega)), \qquad \rho^{\dagger}\colon \mathcal{O}_{P^{\dagger}}\to B(L^2(\mathcal{O}_{P^\dagger},\omega^{\dagger})).\] 

For $n\geq 0$ and $\xi \in H_n$, we write $\xi^{\dagger}$ for the vector with the tensor factors of $\xi$ written in the opposite order. Then $\xi^{\dagger}$ lies in the $n$-th fiber of the subproduct system defined by $P^{\dagger}$. Further, for $0\leq k\leq n$ we can write $$\xi= \sum_i\xi [k]_i\otimes \xi [n-k]_i\in H_k\otimes H_{n-k}.$$ Similarly to Sweedler's notation, we omit the sum and simply write $\xi = \xi [k]\otimes \xi [n-k].$ 

For $\xi=\sum_\alpha c_\alpha\xi_{\alpha_1}\otimes\dots\otimes\xi_{\alpha_n}\in H_n$, put
$$
s_\xi:=\sum_\alpha c_\alpha s_{\alpha_1}\dots s_{\alpha_n}\in\OO_P\quad\text{and}\quad
t_{\xi^\dagger}:=\sum _\alpha c_\alpha t_{\alpha_n}\dots t_{\alpha_1}\in\OO_{P^\dagger}.
$$

The following result is a straightforward consequence of~\eqref{eq:KMS1}.

\begin{lemma}
The operator $V\colon\Fock \to L^2(\mathcal{O}_P,\omega)\otimes L^2(\mathcal{O}_{P^\dagger},\omega^{\dagger})$ given by \[ V\xi: = \frac{q^{n/2}}{(n+1)^{1/2}}\sum_{k=0}^{n}[k+1]_q^{1/2}[n-k+1]_q^{1/2}\Lambda(s_{\xi [k]})\otimes \Lambda^{\dagger}(t_{\xi [n-k]^{\dagger}}),\]
for $n\geq 0$ and $\xi \in H_n$, is an isometry.
\end{lemma}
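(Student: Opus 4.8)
The plan is to verify directly that $V$ preserves inner products on the dense algebraic subspace $\bigoplus_n H_n\subseteq\Fock$, i.e.\ that $(V\xi,V\eta)=(\xi,\eta)$ for $\xi\in H_n$ and $\eta\in H_{n'}$, after which $V$ extends to an isometry of $\Fock$. The first thing I would prepare are the inner-product formulas for the GNS-maps. From~\eqref{eq:KMS1}, together with $f_k|_{H_k}=\mathrm{id}$, one reads off that for $\zeta,\zeta'\in H_k$ one has $(\Lambda(s_\zeta),\Lambda(s_{\zeta'}))=\omega(s_{\zeta'}^*s_\zeta)=\frac{q^{-k}}{[k+1]_q}(\zeta,\zeta')$, while $(\Lambda(s_\zeta),\Lambda(s_{\zeta'}))=0$ whenever $\zeta\in H_k$ and $\zeta'\in H_l$ with $k\neq l$, since $s_{\zeta'}^*s_\zeta$ has nonzero gauge degree and $\omega$ is gauge invariant. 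The analogous identities hold for $\omega^\dagger$ on $\OQ$: the number $q$ is the same for $P$ and $P^\dagger$ because in standard form their coefficients satisfy $\sum_i|a_i^\dagger|^2=\sum_i|a_{m-i+1}|^2=\sum_i|a_i|^2$, and $\zeta\mapsto\zeta^\dagger$ is the restriction of the unitary $\Fock\cong\F_{P^\dagger}$, hence maps $H_l$ isometrically onto the $l$-th fibre of $\HH_{P^\dagger}$; so $(\Lambda^\dagger(t_{\zeta^\dagger}),\Lambda^\dagger(t_{\zeta'^\dagger}))=\delta_{l,l'}\frac{q^{-l}}{[l+1]_q}(\zeta,\zeta')$ for $\zeta\in H_l$, $\zeta'\in H_{l'}$.

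Next I would unwind $(V\xi,V\xi)$ for $\xi\in H_n$. Writing $\xi\in H_n\subseteq H_k\otimes H_{n-k}$ in Sweedler notation $\xi=\xi[k]\otimes\xi[n-k]$ (suppressing the sum in $\xi=\sum_j\xi[k]_j\otimes\xi[n-k]_j$), the index-$k$ summand of $V\xi$ is $\sum_j\Lambda(s_{\xi[k]_j})\otimes\Lambda^\dagger(t_{\xi[n-k]_j^\dagger})$. For $k\neq l$ the index-$k$ and index-$l$ summands of $V\xi$ are orthogonal, because their first legs $\Lambda(s_{\xi[k]_j})$ and $\Lambda(s_{\xi[l]_{j'}})$ lie in GNS-classes of distinct gauge degrees; so only the diagonal survives in $(V\xi,V\xi)$, and by the formulas above together with $\sum_{j,j'}(\xi[k]_j\otimes\xi[n-k]_j,\,\xi[k]_{j'}\otimes\xi[n-k]_{j'})=(\xi,\xi)$ (which holds since the embedding $H_n\subseteq H_k\otimes H_{n-k}$ is isometric), the index-$k$ contribution to $(V\xi,V\xi)$ equals
\[
\frac{q^{n}}{n+1}\,[k+1]_q[n-k+1]_q\cdot\frac{q^{-k}}{[k+1]_q}\cdot\frac{q^{-(n-k)}}{[n-k+1]_q}\,(\xi,\xi)=\frac{1}{n+1}\,(\xi,\xi).
\]
Summing over $k=0,\dots,n$ gives $\|V\xi\|^2=\|\xi\|^2$, and the same bookkeeping for general $\xi,\eta\in H_n$ gives $(V\xi,V\eta)=(\xi,\eta)$. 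For $\xi\in H_n$ and $\eta\in H_{n'}$ with $n\neq n'$ every cross term vanishes: either the first legs have different gauge degrees $k\neq k'$, or $k=k'$ and then the second legs $t_{\xi[n-k]^\dagger}$, $t_{\eta[n'-k]^\dagger}$ have different gauge degrees $n-k\neq n'-k$, so gauge invariance of $\omega$, resp.\ $\omega^\dagger$, kills the term. Hence $(V\xi,V\eta)=(\xi,\eta)$ in all cases and $V$ is an isometry.

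I expect no genuine obstacle here: the statement is essentially a bookkeeping consequence of~\eqref{eq:KMS1}. The only point that needs care is tracking the two gauge gradings (on $\OP$ and on $\OQ$) simultaneously with the Sweedler decomposition of $\xi\in H_n$, so that all off-diagonal contributions may be discarded and the surviving diagonal reassembled into $(\xi,\xi)$; once that is arranged, the cancellation of the $q$-powers and of the weights $[k+1]_q[n-k+1]_q$ against the normalizing factor $q^{n/2}(n+1)^{-1/2}[k+1]_q^{1/2}[n-k+1]_q^{1/2}$ is automatic and the $k$-sum contributes exactly the factor $n+1$.
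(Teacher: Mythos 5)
Your proof is correct, and it carries out in full detail precisely the verification that the paper compresses into the single remark that the lemma is ``a straightforward consequence of~\eqref{eq:KMS1}.'' You derive the GNS inner-product formula $(\Lambda(s_\zeta),\Lambda(s_{\zeta'}))=\tfrac{q^{-k}}{[k+1]_q}(\zeta,\zeta')$ (and its $\dagger$-analogue) from~\eqref{eq:KMS1}, use gauge-invariance of $\omega$ and $\omega^\dagger$ to discard the off-diagonal terms in both the $k$-sum and the $n$-grading, identify the surviving diagonal sum over Sweedler legs with the inner product in $H_k\otimes H_{n-k}$, and observe that the $q$-powers and quantum integers cancel against the normalization, leaving $\tfrac{1}{n+1}(\xi,\eta)$ per $k$ and hence $(\xi,\eta)$ after summing. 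No gaps.
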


We are now ready to describe a Fredholm module representative for the fundamental class $\Delta$ given in Definition~\ref{def:fundCl}.

\begin{theorem}\label{thm:Fredholm}
Consider a Temperley--Lieb polynomial $P=\sum^m_{i=1}a_iX_iX_{m-i+1},$ $|a_ia_{m-i+1}|=1$, with $\sum_{i=1}^{m}|a_i|^2>2$. Then the triple $\left(\mathcal{H}, \pi, F\right)$, where $\mathcal{H}:=L^2(\mathcal{O}_P,\omega)\otimes L^2(\mathcal{O}_{P^\dagger},\omega^{\dagger}),$ $\pi:=\rho \otimes \rho^{\dagger}$ and $F:=2VV^*-1$, defines an odd Fredholm module that represents $\Delta \in KK_1(\mathcal{O}_{P}\otimes \mathcal{O}_{P^\dagger},\mathbb C)$.
\end{theorem}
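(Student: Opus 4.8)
The plan is to verify the three axioms of an odd Fredholm module for $(\mathcal H,\pi,F)$ and then to identify the resulting class with~$\Delta$. Since $F = 2VV^*-1$ with $V$ an isometry, $F$ is automatically self-adjoint, and $F^2 = (2VV^*-1)^2 = 4VV^* - 4VV^* + 1 = 1$, so two of the axioms hold exactly. The substance lies in showing that $[F,\pi(x)]$ is compact for all $x\in\OO_P\otimes\OO_{P^\dagger}$, equivalently that $[VV^*, \rho(s_i)\otimes1]$ and $[VV^*, 1\otimes\rho^\dagger(t_j)]$ are compact; it suffices to check this on the algebraic generators, since commutators with compacts form a closed two-sided ideal and $\pi$ is a $*$-homomorphism.

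\textbf{Compactness of commutators.} First I would compute $V^*\rho(s_i)V$ and $V^*(\rho^\dagger(t_j))V$ as operators on $\Fock$ and compare them with the left/right creation operators $L_i$, $R_j$. The point is that, up to the normalizing scalar factors $q^{n/2}(n+1)^{-1/2}[k+1]_q^{1/2}[n-k+1]_q^{1/2}$, the isometry $V$ spreads a vector $\xi\in H_n$ over the ``diagonal'' $\bigoplus_k \Lambda(s_{\xi[k]})\otimes\Lambda^\dagger(t_{\xi[n-k]^\dagger})$, and multiplication by $s_i$ in the left tensor leg corresponds, via the coproduct-type splitting $\xi\mapsto\xi[k]\otimes\xi[n-k]$ together with Lemma~\ref{lem:Fredholm}, to the operator $L_i$ up to an error that decays like $q^k$ as one moves into the Fock space. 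Concretely, $V^*(\rho(s_i)\otimes1)V - L_i$ should be shown to be a norm limit of finite-rank operators: the discrepancy between $f_{n+1}$ and $(f_{k+1}\otimes1)(1\otimes f_n)$ is controlled by $C_1q^k$, and the ratios of the $[\cdot]_q$-factors and of $(n+1)^{-1/2}$ versus $(n+2)^{-1/2}$ converge as $n\to\infty$, so the combined error operator is compact. The same argument, now invoking the analogous estimate for $R_j$ (equivalently Lemma~\ref{lem:Fredholm} applied on the right, via the $\dagger$-flip), handles $V^*(1\otimes\rho^\dagger(t_j))V - R_j$. Since $[L_i,R_j]=0$ exactly and $VV^* = V(V^*V)V^* $ with $V^*V=1$, we get
$$[VV^*,\rho(s_i)\otimes1] = V[\,V^*(\rho(s_i)\otimes1)V\,,\,V^*V\,]V^* + (\text{compact}) ,$$
and more directly $VV^*\pi(x) - \pi(x)VV^*$ becomes, after conjugating, a commutator of something $O(q^{\,\cdot})$-close to $L_i$ with $1-V^*V = 0$; the surviving terms are compact. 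It is cleaner to argue that $[VV^*,\pi(x)] = V\big(V^*\pi(x)V\cdot V^*V - V^*V\cdot V^*\pi(x)V\big)V^* + V(V^*\pi(x) - V^*\pi(x)VV^*) = $ a sum of terms each of which is compact because $1-VV^*$ need not vanish but $V^*\pi(\OO_P)V$ and $V^*\pi(\OO_{P^\dagger})V$ land in $\TT_P$ and $\TT_{P^\dagger}$ modulo compacts, and by Lemma~\ref{lem:comm_est} these two Toeplitz algebras commute modulo $\K(\Fock)$. I expect this bookkeeping — organizing the two approximation estimates and the commutation modulo compacts into a clean statement — to be the main obstacle; the individual estimates are routine given Lemmas~\ref{lem:Fredholm} and~\ref{lem:comm_est}.

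\textbf{Identification with $\Delta$.} Once $(\mathcal H,\pi,F)$ is a Fredholm module, it represents a class in $KK_1(\OO_P\otimes\OO_{P^\dagger},\C)$, and I would identify it with $\Delta$ via the extension picture of Section~\ref{sec:fundamental}. The key observation from the previous paragraph is that $V^*\pi(s_i\otimes1)V \equiv L_i$ and $V^*\pi(1\otimes t_j)V \equiv R_j$ modulo $\K(\Fock)$. Writing $p:=VV^*$, the compression $x\mapsto p\,\pi(x)\,p$ followed by the identification $p\Fock\cong\Fock$ via $V$ therefore agrees, modulo compacts, with the homomorphism $\OO_P\otimes\OO_{P^\dagger}\to\QQ(\Fock)$, $s_i\otimes t_j\mapsto (L_i+\K)(R_j+\K)$, which is exactly the Busby invariant $\tau$ of~\eqref{eq:fundext} defining $\Delta$. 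Since the Fredholm module with $F=2p-1$ corresponds under the standard dictionary (cf.\ the identification $\Ext(\,\cdot\,,\C)\cong KK_1(\,\cdot\,,\C)$) to the Busby map $x\mapsto p\pi(x)p + \K(\Fock)$ acting on $p\mathcal H\cong\Fock$, and this map equals $\tau$ modulo compacts, the two classes coincide. Hence $(\mathcal H,\pi,F)$ represents $\Delta$, as claimed.
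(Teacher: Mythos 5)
Your overall strategy matches the paper's: the heart of both is the norm estimate showing $V^*\pi(\cdot\,\otimes 1)V \equiv L_i$ and $V^*\pi(1\otimes\,\cdot)V \equiv R_j$ modulo $\K(\Fock)$, i.e.\ that $V^*\pi(x)V$ agrees with the Busby invariant $\tau(x)$ modulo compacts, which gives the identification with~$\Delta$. However, your intermediate manipulations in the ``compactness of commutators'' paragraph are garbled: the displayed identity $[VV^*,\rho(s_i)\otimes1] = V[V^*(\rho(s_i)\otimes1)V,\,V^*V]V^* + (\text{compact})$ is vacuous, since $V^*V=1$ forces the commutator on the right to be zero, so the equation just restates the claim; likewise the term $V(V^*\pi(x)V\cdot V^*V - V^*V\cdot V^*\pi(x)V)V^*$ is identically zero, and the remaining term $V(V^*\pi(x) - V^*\pi(x)VV^*) = p\pi(x)(1-p)$ with $p=VV^*$ accounts for only half of $[p,\pi(x)]=p\pi(x)(1-p)-(1-p)\pi(x)p$. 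The paper sidesteps all of this: once you have the single statement $\tau(x)=V^*\pi(x)V+\K(\Fock)$ for all $x$, compactness of $[VV^*,\pi(x)]$ is \emph{automatic} because $\tau$ is a $*$-homomorphism. (Concretely: $V^*\pi(x^*)(1-VV^*)\pi(x)V = V^*\pi(x^*x)V - (V^*\pi(x^*)V)(V^*\pi(x)V)\equiv \tau(x^*x)-\tau(x^*)\tau(x)=0$ mod $\K$, so $(1-VV^*)\pi(x)V$ is compact, whence $(1-VV^*)\pi(x)VV^*$ and its adjoint are compact, and $[VV^*,\pi(x)]$ is their difference.) So there is no separate verification of the Fredholm module axioms to do: prove the one estimate, note that $F=2VV^*-1$ is already a self-adjoint unitary, and both the Fredholm module condition and the equality of $KK$-classes follow simultaneously. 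You should also make the estimate itself explicit rather than declaring it ``routine'': the paper's computation combines Lemma~\ref{lem:Fredholm} with the elementary bounds on $(n+1)^{-1/2}-(n+2)^{-1/2}$ and on $1-q^{-1/2}[k]_q^{1/2}/[k+1]_q^{1/2}$, and summing the resulting geometric tail is where the hypothesis $q<1$ is used.
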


\begin{proof}
We show that for the extension $\tau\colon \mathcal{O}_P\otimes \mathcal{O}_{P^\dagger}\to \mathcal{Q}(\Fock)$ in \eqref{eq:fundext}, for every $x\in \mathcal{O}_P\otimes \mathcal{O}_{P^\dagger}$, it holds that 
\begin{equation}\label{eq:Fredholm2}
\tau(x)=V^*\pi(x)V+\mathcal{K}(\Fock).    
\end{equation}
Since $\tau$ is a $*$-homomorphism, it follows then that $[\pi(x),2VV^*-1]\in \mathcal{K}(\HH)$, for every $x\in \mathcal{O}_P\otimes \mathcal{O}_{P^\dagger}$. Thus, $(\mathcal{H},\pi,F)$ is an odd Fredholm module that represents $\Delta$, as it satisfies~\eqref{eq:Fredholm2}.

Now, since each $H_n$ is finite dimensional, in order to prove \eqref{eq:Fredholm2}, it is enough to show that, for every $1\leq i\leq m$,
\begin{equation}\label{eq:Fredholm3}
\lim_{n\to \infty} \|(VL_i-\pi(s_i\otimes 1_{\mathcal{O}_{P^\dagger}})V)|_{H_n}\|=0,\end{equation}
and that similarly \[\lim_{n\to \infty} \|(VR_i-\pi(1_{\mathcal{O}_{P}}\otimes t_i)V)|_{H_n}\| =0. \]
We shall discuss the former case only, as the latter follows \emph{mutatis mutandis}. 

For $\xi\in H_n$, we have  
\[
VL_i\xi =\frac{q^{(n+1)/2}}{(n+2)^{1/2}}\sum_{k=0}^{n+1}[k+1]_q^{1/2}[n-k+2]_q^{1/2}\Lambda(s_{f_{n+1}(\xi_i\otimes \xi)[k]})\otimes \Lambda^{\dagger}(t_{f_{n+1}(\xi_i\otimes \xi)[n-k]^{\dagger}}),\]
and
\[
\pi(s_i\otimes 1_{\mathcal{O}_{P^\dagger}})V\xi =\frac{q^{n/2}}{(n+1)^{1/2}}\sum_{k=0}^{n}[k+1]_q^{1/2}[n-k+1]_q^{1/2}\Lambda(s_{f_{k+1}(\xi_i\otimes \xi[k])})\otimes \Lambda^{\dagger}(t_{\xi[n-k]^{\dagger}}).
\]
Therefore, 
\begin{equation}\label{eq:commutatorVL}
\begin{aligned}
& \|VL_i\xi-\pi(s_i\otimes 1_{\mathcal{O}_{P^\dagger}})V\xi\|^2= \frac{\|f_{n+1}(\xi_i\otimes \xi)\|^2}{n+2}\\
& \quad  + \sum_{k=1}^{n+1} \left \|\frac{1}{(n+2)^{1/2}}f_{n+1}(\xi_i\otimes \xi)-\frac{q^{-1/2}[k]_q^{1/2}}{(n+1)^{1/2}[k+1]_q^{1/2}}(f_{k+1}\otimes 1_{n-k})(\xi_i\otimes \xi) \right \|^2.
 \end{aligned}
\end{equation}

Now, for any integer $n \geq 0$, we have that 
\begin{equation}\label{eq:invRoots}
0 < \frac{1}{(n+1)^{1/2}}-\frac{1}{(n+2)^{1/2}} \leq  \frac{1}{(n+1)^{3/2}}.
\end{equation}
We can also find a constant $C_2>0$, independent of $n$ and $k$, such that
\begin{equation}\label{eq:est_frac_qnumbers}
\left\vert 1-\frac{q^{-1/2}[k]_q^{1/2}}{[k+1]_q^{1/2}} \right\vert= \left\vert 1-\frac{(1-q^{2k})^{1/2}}{(1-q^{2k+2})^{1/2}} \right \vert \leq C_2q^{2k}.    
\end{equation}
Then, from Lemma~\ref{lem:Fredholm} and inequalities \eqref{eq:invRoots} and \eqref{eq:est_frac_qnumbers}, we obtain that \eqref{eq:commutatorVL} is no greater than 
\[ \left( \frac{1}{n+2}+3\sum_{k=1}^{n+1} \left(\frac{1}{(n+1)^3}+\frac{C_2^2q^{4k}}{n+1}+\frac{C_1^2q^{2k}}{n+1}\right) \right) \cdot \|\xi\|^2,\] 
which implies \eqref{eq:Fredholm3}.   
\end{proof}

We can now compute the slant product $-\otimes_{\mathcal{O}_{P}}\Delta\colon K_j(\mathcal{O}_{P})\to K^{j+1}(\mathcal{O}_{P^\dagger})$ in terms of Fredholm modules. This follows form general computations found in \cite[Proposition 4.5]{Ge} and \cite[Lemma 2.1.4]{GoMe}, hence we omit its proof. 

\begin{prop}\label{prop:slantprod} 
Under the assumptions of Theorem~\ref{thm:Fredholm}, the slant product $-\otimes_{\mathcal{O}_{P}}\Delta\colon K_0(\mathcal{O}_{P})\to K^{1}(\mathcal{O}_{P^\dagger})$ is given by $$[e]\mapsto [\mathcal{H}_e,\pi_e,F_e]\qquad (e\in M_k(\mathbb C)\otimes \mathcal{O}_P),$$ where  
\begin{itemize}
    \item[--] $\mathcal{H}_e=P_e(\mathbb C^k \otimes \mathcal{H})$, with $P_e$ being the projection $(id_{M_k(\mathbb C)}\otimes \pi)(e\otimes 1_{\mathcal{O}_{P^\dagger}});$
    \item[--] $\pi_e\colon \mathcal{O}_{P^\dagger}\to B(\mathcal{H}_e),\,t\mapsto (id_{M_k(\mathbb C)}\otimes \pi)(e\otimes t)$;
    \item[--] $F_e=P_e(id_{\mathbb C^k}\otimes F)P_e.$
\end{itemize}
On the other hand, the slant product $-\otimes_{\mathcal{O}_{P}}\Delta\colon K_1(\mathcal{O}_{P})\to K^{0}(\mathcal{O}_{P^\dagger})$ is given by $$[u]\mapsto \left[\mathcal{H}_k\oplus \mathcal{H}_k,\pi^{\dagger}_k\oplus \pi^{\dagger}_k, \begin{pmatrix}
    0 & G_{k,u}^*\\
    G_{k,u} & 0
\end{pmatrix}\right] \qquad (u\in M_k(\mathbb C)\otimes \mathcal{O}_P),$$
where 
\begin{itemize}
    \item[--] $\mathcal{H}_k=\mathbb C^k\otimes \mathcal{H}$;
    \item[--] $\pi^{\dagger}_k\colon \mathcal{O}_{P^\dagger}\to B(\mathcal{H}_k),\, t\mapsto id_{\mathbb C^k} \otimes \pi(1_{\mathcal{O}_P}\otimes t);$
    \item[--] $G_{k,u}=P_k(id_{M_k(\mathbb C)}\otimes \pi)(u\otimes 1_{\mathcal{O}_{P^\dagger}})P_k+1-P_k$, with $P_k$ being the projection $id_{\mathbb C^k}\otimes VV^*$. 
\end{itemize}
\end{prop}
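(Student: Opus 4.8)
The plan is to recognize the slant product $-\otimes_{\OO_P}\Delta$ as an instance of the standard formula for the Kasparov product of a $K$-theory class with an odd Fredholm module, applied to the representative $(\HH,\pi,F)$ of $\Delta$ produced in Theorem~\ref{thm:Fredholm}, and then simply unwind the construction. Recall from~\eqref{eq:slantprod} that $-\otimes_{\OO_P}\Delta$ sends $x\in K_j(\OO_P)=KK_j(\C,\OO_P)$ to the composition $(x\otimes 1_{KK(\OO_{P^\dagger},\OO_{P^\dagger})})\otimes_{\OO_P\otimes\OO_{P^\dagger}}\Delta\in KK_{j+1}(\OO_{P^\dagger},\C)=K^{j+1}(\OO_{P^\dagger})$, where the exterior product is taken first and the interior product with $\Delta$ second.

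For $j=0$, I would represent $x=[e]$ by the finitely generated projective Hilbert $\OO_P$-module $e(\C^k\otimes\OO_P)$ carrying the zero operator; its exterior product with $1_{\OO_{P^\dagger}}$ is represented by $(e\otimes 1_{\OO_{P^\dagger}})(\C^k\otimes\OO_P\otimes\OO_{P^\dagger})$ as a Hilbert $\OO_P\otimes\OO_{P^\dagger}$-module, with $\OO_{P^\dagger}$ acting by left multiplication and with zero operator. The interior Kasparov product of such a class against the Fredholm module $(\HH,\pi,F)$ is computed by the well-known recipe: one forms the balanced tensor product, which here is $P_e(\C^k\otimes\HH)$, since $(\OO_P\otimes\OO_{P^\dagger})^k\otimes_{\OO_P\otimes\OO_{P^\dagger}}\HH\cong\C^k\otimes\HH$ and the projection $e\otimes 1_{\OO_{P^\dagger}}$ goes over to $P_e=(id_{M_k(\C)}\otimes\pi)(e\otimes 1_{\OO_{P^\dagger}})$; one keeps the residual left $\OO_{P^\dagger}$-action $\pi_e$; and one compresses $id_{\C^k}\otimes F$ to $F_e=P_e(id_{\C^k}\otimes F)P_e$. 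The connection and positivity conditions in the definition of the Kasparov product are automatic here, since the first factor carries the zero operator on a finitely generated projective module; and one checks directly, using that $\rho^\dagger$ is unital and that $[\pi(x),F]$ is compact, that $(\HH_e,\pi_e,F_e)$ is a genuine odd Fredholm module. This is precisely the computation carried out in~\cite[Proposition~4.5]{Ge} and~\cite[Lemma~2.1.4]{GoMe} in closely related settings, and it yields the triple of the statement.

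For $j=1$, I would represent $x=[u]$ by a unitary $u\in M_k(\OO_P)$ (recall $\OO_P$ is unital, as $\sum_i s_is_i^*=1$) and exterior-multiply with $1_{\OO_{P^\dagger}}$ to obtain the class represented by the unitary $u\otimes 1_{\OO_{P^\dagger}}$ over $\OO_P\otimes\OO_{P^\dagger}$. The interior product of a $K_1$-class represented by a unitary against an odd Fredholm module is given by the generalized Toeplitz construction: writing $P=\frac{1}{2}(1+F)$ for the spectral projection onto the positive part of $F$ — which in our case is exactly $VV^*$, since $F=2VV^*-1$ — the product is the even Fredholm module on $\HH_k\oplus\HH_k$, graded by the two summands, with diagonal left action $\pi_k^\dagger\oplus\pi_k^\dagger$ and with the odd self-adjoint operator $\begin{pmatrix} 0 & G_{k,u}^*\\ G_{k,u} & 0\end{pmatrix}$ determined by the Toeplitz operator $G_{k,u}=P_k(id_{M_k(\C)}\otimes\pi)(u\otimes 1_{\OO_{P^\dagger}})P_k+(1-P_k)$, $P_k=id_{\C^k}\otimes VV^*$. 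Again this is the content of~\cite[Proposition~4.5]{Ge} and~\cite[Lemma~2.1.4]{GoMe} specialized to $(\HH,\pi,F)$.

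The only delicate point — rather than a genuine obstacle — will be the bookkeeping: one must track which of $\OO_P$ and $\OO_{P^\dagger}$ acts on which leg of $\HH=L^2(\OO_P,\omega)\otimes L^2(\OO_{P^\dagger},\omega^\dagger)$, verify that the exterior-then-interior product indeed lands in $KK_{*}(\OO_{P^\dagger},\C)$ with the grading and the sign dictated by the flip-free convention of Definition~\ref{def:KK-duality}, and confirm that the general formulas in~\cite{Ge,GoMe} are stated with enough generality to apply verbatim; they are, since they deal with arbitrary odd Fredholm modules over $C^*$-algebras and their products with projections and unitaries, which is exactly our situation.
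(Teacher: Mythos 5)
Your proposal is correct and takes essentially the same approach as the paper: the paper omits the proof, citing exactly the general Kasparov product computations in \cite[Proposition~4.5]{Ge} and \cite[Lemma~2.1.4]{GoMe}, which you unwind in the standard way (balanced tensor product over $\OO_P\otimes\OO_{P^\dagger}$ for the $K_0$ case and the generalized Toeplitz construction with $P=\tfrac12(1+F)=VV^*$ for the $K_1$ case). The only thing to double-check, which you flag, is the flip-free convention of Definition~\ref{def:KK-duality}, which ensures the result lands in $K^{j+1}(\OO_{P^\dagger})$ as stated.
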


As the embedding $\mathbb C \to \mathcal{T}_{P}$ is a $KK$-equivalence, the unit in $\OO_P$ defines a generator of~$K_0(\OO_P)$. Consequently, from the $KK$-duality and Proposition \ref{prop:slantprod} we obtain the following.

\begin{cor}
The group $K^{1}(\mathcal{O}_{P^\dagger})\cong \mathbb Z/(m-2)\mathbb Z$ is generated by $[\mathcal{H},1\otimes\rho^\dagger(-),F]$. In a similar way, $K^{1}(\mathcal{O}_{P})\cong \mathbb Z/(m-2)\mathbb Z$ is generated by $[\mathcal{H},\rho(-)\otimes1,F]$.
\end{cor}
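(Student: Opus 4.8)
The plan is to read both statements off the $KK$-duality together with the explicit description of the slant product in Proposition~\ref{prop:slantprod}. Since $\Delta$ and $\delta$ implement a $KK$-duality between $\mathcal{O}_P$ and $\mathcal{O}_{P^\dagger}$ (Section~\ref{sec:proof}), the slant product $-\otimes_{\mathcal{O}_P}\Delta\colon K_0(\mathcal{O}_P)\to K^1(\mathcal{O}_{P^\dagger})$ from~\eqref{eq:slantprod} is an isomorphism. By Corollary~\ref{cor:Ktheory} and the remark preceding its statement, $[1_{\mathcal{O}_P}]$ generates $K_0(\mathcal{O}_P)\cong\mathbb Z/(m-2)\mathbb Z$, so its image under this slant product generates $K^1(\mathcal{O}_{P^\dagger})$, and the whole task is to compute that image.

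For this I would apply Proposition~\ref{prop:slantprod} to the projection $e=1_{\mathcal{O}_P}\in M_1(\mathbb C)\otimes\mathcal{O}_P$. Because $\pi=\rho\otimes\rho^\dagger$ is unital, $P_e=\pi(1_{\mathcal{O}_P}\otimes 1_{\mathcal{O}_{P^\dagger}})=1_{\mathcal H}$, hence $\mathcal H_e=\mathcal H$, $F_e=F$, and $\pi_e(t)=\pi(1_{\mathcal{O}_P}\otimes t)=1\otimes\rho^\dagger(t)$ for $t\in\mathcal{O}_{P^\dagger}$. Thus $[1_{\mathcal{O}_P}]\otimes_{\mathcal{O}_P}\Delta=[\mathcal H,1\otimes\rho^\dagger(-),F]$, which therefore generates $K^1(\mathcal{O}_{P^\dagger})$. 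This proves the first assertion.

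For the second assertion I would use that $\dagger$ is an involution, so that the constructions of Sections~\ref{sec:KMS} and~\ref{sec:Fredholm} and the duality itself apply verbatim with $P$ replaced by $P^\dagger$; here $(P^\dagger)^\dagger=P$ and the canonical invariant state of $\mathcal{O}_{(P^\dagger)^\dagger}=\mathcal{O}_P$ is again $\omega$, while the parameter $q$ is unchanged. The first assertion applied to $P^\dagger$ then shows that $K^1(\mathcal{O}_P)$ is generated by the class of the Fredholm module with Hilbert space $L^2(\mathcal{O}_{P^\dagger},\omega^\dagger)\otimes L^2(\mathcal{O}_P,\omega)$, representation $1\otimes\rho(-)$, and operator $F^\dagger=2V^\dagger(V^\dagger)^*-1$, where $V^\dagger$ is the isometry of Theorem~\ref{thm:Fredholm} associated with $P^\dagger$. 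The one step I expect to require genuine computation is identifying this module with $[\mathcal H,\rho(-)\otimes 1,F]$: the unitary tensor flip $L^2(\mathcal{O}_{P^\dagger},\omega^\dagger)\otimes L^2(\mathcal{O}_P,\omega)\to\mathcal H$ manifestly carries $1\otimes\rho(-)$ to $\rho(-)\otimes 1$, but one must still check that, combined with the unitary $\mathcal{F}_{P^\dagger}\cong\mathcal{F}_P$ induced by reversing tensor legs, it carries $V^\dagger$ to $V$, hence $F^\dagger$ to $F$. This amounts to unwinding the explicit formula for $V$ and observing that the factorisation $\xi^\dagger=\xi[n-k]^\dagger\otimes\xi[k]^\dagger$ in the $P^\dagger$-subproduct system corresponds to $\xi=\xi[k]\otimes\xi[n-k]$ after relabelling the summation index $k\leftrightarrow n-k$, under which the $q$-integer weights in the definition of $V$ are symmetric; the two isometries then coincide, which completes the proof.
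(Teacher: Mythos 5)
Your argument for the first assertion is exactly the paper's: $[1_{\OO_P}]$ generates $K_0(\OO_P)$ because $\C\hookrightarrow\TT_P$ is a $KK$-equivalence, the slant product $-\otimes_{\OO_P}\Delta$ is an isomorphism by the duality, and plugging $e=1_{\OO_P}$ into the formula of Proposition~\ref{prop:slantprod} gives $P_e=1$, hence $[\mathcal{H},1\otimes\rho^\dagger(-),F]$.

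For the second assertion you take a genuinely different path. The most economical reading of the paper's ``in a similar way'' is to slant the \emph{same} fundamental class against the other tensor factor: $-\otimes_{\OO_{P^\dagger}}\Delta\colon K_0(\OO_{P^\dagger})\to K^1(\OO_P)$ is also an isomorphism, and the analogue of Proposition~\ref{prop:slantprod} with the projection placed in the $\OO_{P^\dagger}$ slot gives, for $e=1_{\OO_{P^\dagger}}$, directly $[\mathcal{H},\rho(-)\otimes1,F]$ --- same Hilbert space, same $F$, representation restricted to the first factor, with nothing to identify. You instead apply the first assertion to $P^\dagger$ in place of $P$, landing on $K^1(\OO_P)$ generated by $[\mathcal{H}^\dagger,1\otimes\rho(-),F^\dagger]$ on the flipped Hilbert space, and then conjugate by the tensor flip together with the unitary $\F_{P^\dagger}\cong\F_P$. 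This works: the factorization $(\xi^\dagger)[k]\otimes(\xi^\dagger)[n-k]=\xi[n-k]^\dagger\otimes\xi[k]^\dagger$ and the symmetry of $[k+1]_q^{1/2}[n-k+1]_q^{1/2}$ under $k\leftrightarrow n-k$ do indeed carry $V^\dagger$ into $V$ and hence $F^\dagger$ into $F$. But it is an extra verification that the symmetric slant-product argument avoids entirely. The conclusion is the same; your route has the pedagogical virtue of exhibiting that the whole construction (KMS state, isometry $V$, Fredholm module) transforms naturally under $\dagger$, while the paper's route buys brevity by exploiting that $\Delta$ is a single bivariant class from which both slant products can be read off.
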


\begin{remark}
The actions of $\GA$ on $\OO_P$ and $\OO_{P^\dagger}$ define unitary representations of $\GA$ on $L^2(\OO_P,\omega)$ and $L^2(\OO_{P^{\dagger}},\omega^\dagger)$, hence a unitary representation on $\HH$. It follows that the Fredholm module $(\mathcal{H},1\otimes\rho^\dagger(-),F)$ defines a class in the $\GA$-equivariant $K$-homology group $K^1_{\GA}(\OO_{P^\dagger})$. Similarly, $(\mathcal{H},\rho(-)\otimes1,F)$ defines a class in $K^1_{\GA}(\OO_{P})$.
\end{remark}

\bigskip

\end{document}